\theoremstyle{definition}
\newtheorem{theorem}{Theorem}[section]
\newtheorem{definition}[theorem]{Definition}
\newtheorem{proposition}[theorem]{Proposition}
\newtheorem{corollary}[theorem]{Corollary}
\newtheorem{example}[theorem]{Example}
\newtheorem{remark}[theorem]{Remark}
\newtheorem{notation}[theorem]{Notation}
\newcommand{\colourcomment}[3]
{%
\ifthenelse{\boolean{commentBoolVar}}{{\color{#2}(#1: #3)}}{}%
}%
\renewcommand{\bar}{\overline}
\renewcommand{\hat}{\widehat}
\renewcommand{\Im}{\mathrm{Im}}
\newcommand{\boldvarphi}{\boldsymbol{\varphi}}
\DeclareMathOperator{\Gal}{\mbox{Gal}}
\DeclareMathOperator{\Perm}{\mbox{Perm}}
\DeclareMathOperator{\Hol}{\mbox{Hol}}
\DeclareMathOperator{\Aut}{\mbox{Aut}}
\DeclareMathOperator{\Stab}{\mbox{Stab}}
\begin{document}

\title[Skew bracoids]{Skew bracoids}

\author{Isabel Martin-Lyons}
\address{School of Computer Science and Mathematics \\ Keele University \\ Staffordshire \\ ST5 5BG \\ UK}
\email{I.D.Martin-Lyons@Keele.ac.uk}

\author{Paul J. Truman}
\address{School of Computer Science and Mathematics \\ Keele University \\ Staffordshire \\ ST5 5BG \\ UK}
\email{P.J.Truman@Keele.ac.uk}

\subjclass[2020]{Primary 20N99; Secondary 16T05, 12F10}

\keywords{Skew left braces, Hopf-Galois structure, Hopf-Galois theory}

\begin{abstract}
Skew braces are intensively studied owing to their wide ranging connections and applications. We generalize the definition of a skew brace to give a new algebraic object, which we term a \textit{skew bracoid}. Our construction involves two groups interacting in a manner analogous to the compatibility condition found in the definition of a skew brace. We formulate tools for characterizing and classifying skew bracoids, and study substructures, quotients, homomorphisms, and isomorphisms. As a first application, we prove that finite skew bracoids correspond with Hopf-Galois structures on finite separable extensions of fields, generalizing the existing connection between finite skew braces and Hopf-Galois structures on finite Galois extensions. 
\end{abstract}

\maketitle

\section{Introduction} \label{sec_introduction}

Skew braces are a generalization, due to Guanieri and Vendramin \cite{GV17}, of braces, which were introduced by Rump in \cite{Ru07a}. A (left) \textit{skew brace} is a triple $ (B,\star, \cdot) $ such that $ (B,\star) $ and $ (B,\cdot) $ are groups (sometimes called the \textit{additive} and \textit{multiplicative} groups of the skew brace, respectively) and the following compatibility condition (the (left) \textit{skew brace relation}) is satisfied
\begin{equation} \label{eqn_skew_brace_relation}
a \cdot (b \star c) = (a \cdot b) \star a^{-1} \star (a \cdot c) \mbox{ for all } a,b,c \in B.
\end{equation}
Here $ a^{-1} $ denotes the inverse of $ a $ with respect to $ \star $. A \textit{brace} is a skew brace in which the group $ (B,\star) $ is abelian. These objects were introduced, and continue to be intensively studied, because they yield nondegenerate set theoretic solutions of the \textit{Yang-Baxter equation} \cite[Section 3]{GV17}, \cite[Section 3]{SV18}. They have also been found to have connections with a wide range of other algebraic objects, including near-rings, racks, quandles, pre-lie rings, and Hopf-Galois structures. We give a little more detail on this last construction since we return to it later. A \textit{Hopf-Galois structure} on a finite extension of fields $ L/K $ consists of a $ K $-Hopf algebra $ H $ and a $ K $-linear action of $ H $ on $ L $ satisfying a certain nondegeneracy condition (\cite[(2.7) Definition]{TWE}). A theorem of Greither and Pareigis \cite{GP87} implies that if $ L/K $ is a Galois extension with Galois group $ G $ then the Hopf-Galois structures on $ L/K $ correspond bijectively with certain subgroups $ N $ of $ \Perm(G) $ of the same order as $ G $; the isomorphism class of $ N $ is then called the \textit{type} of the corresponding Hopf-Galois structure. As observed by Bachiller \cite[Remark 2.6]{Ba16} (for $ N $ abelian) and by Smoktunowicz, Vendramin, and Byott \cite[Appendix A]{SV18} more generally, the extension $ L/K $ admits a Hopf-Galois structure of type $ N $ if and only if there exists a skew brace $ (B,\star,\cdot) $ with $ (B,\star) \cong N $ and $ (B,\cdot) \cong G $. This connection has recently been sharpened by Stefanello and Trappeniers \cite{ST22}: writing $ G=(G,\cdot) $, they show that there is a bijection between further binary operations $ \star $ on $ G $ such that $ (G,\star,\cdot) $ is a skew brace and Hopf-Galois structures on $ L/K $. The interactions between skew braces and Hopf-Galois structures have generated an explosion of interest in recent years: see for example \cite{Ch18}, \cite{Ch19b}, \cite{NZ19}, \cite{KT20}, \cite{HAGMT}, \cite{CS21}, \cite{KT22}. In particular, it is known that important structural information about a Hopf-Galois structure, such as the intermediate fields occurring in the image of the so-called \textit{Hopf-Galois correspondence}, is reflected in the corresponding skew brace (\cite{Ch19b}, \cite[Section 5]{KT20}, \cite{ST22}). 

Braces and skew braces have been generalized in a variety of ways. For example: to \textit{semi braces} \cite{CCS17}, to \textit{$q$-braces} \cite{Ru19c}, to \textit{weak braces} \cite{CMMS22}, and to \textit{near braces} \cite{DR23}. 

In this paper we introduce a new generalization of skew braces, which we term \textit{skew bracoids}. Our construction involves two groups $ G $ and $ N $ and a transitive action of the former on the latter that interacts with the multiplication in $ N $ in a manner analogous to the skew brace relation \eqref{eqn_skew_brace_relation}. We give the formal definition in Section \ref{sec_defn_characterization}. We show that a skew brace can be viewed as a skew bracoid, and also that skew bracoids arise naturally as certain quotients of skew braces (Proposition \ref{prop_skew_bracoid_quotient_skew_brace}). We show that several important constructions and identities associated with skew braces (such as \textit{$ \gamma $-functions}) have natural skew bracoid counterparts, and also prove that various approaches to constructing and classifying skew braces generalize to skew bracoids (Theorem \ref{thm_characterizations}). On the other hand, we find that we require a notion \textit{equivalence} of skew bracoids (Definition \ref{defn_equivalent}), which does not have a skew brace counterpart, in order to articulate some of the additional complexities that arise in our theory. 

In Section \ref{sec_substructures_quotients} we study subskew bracoids, along with \textit{left ideals} and \textit{ideals}, and prove that we may form quotient skew bracoids with respect to ideals (Proposition \ref{prop_quotient_skew_bracoid}). In Section \ref{sec_homomorphisms} we define \textit{homomorphisms} and \textit{isomorphisms} of skew bracoids, and establish the results that one would naturally expect: the kernel of a skew bracoid homomorphism is an ideal of the domain, the image is a subskew bracoid of the codomain (Proposition \ref{prop_homomorphism_kernel_image}), and a version of the First Isomorphism Theorem holds (Theorem \ref{thm_FIT}). In addition, we consider how homomorphisms and isomorphisms interact with our notion of equivalence introduced in Section \ref{sec_defn_characterization}. 

Finally, in Section \ref{sec_HGS} we prove there is a correspondence between finite skew bracoids and Hopf-Galois structures on a finite separable extension of fields (Theorem \ref{thm_Hopf_Galois_structures}), generalizing the correspondence between finite skew braces and Hopf-Galois structures on Galois extensions described above. We also show that, as in the Galois case, information about the intermediate fields occurring in the image of the Hopf-Galois correspondence is reflected in the corresponding skew bracoid (Theorem \ref{thm_HG_correspondence}). 

It is natural to ask whether skew bracoids have any connection with set theoretic solutions of the Yang-Baxter equation; we will address this question in a forthcoming paper. 

\textbf{Conventions} All groups, skew braces, skew bracoids, and field extensions are assumed to be finite. The identity element of a group $ A $ is denoted $ e_{A} $. 

\textbf{Acknowledgements} The second author gratefully acknowledges the support of the Engineering and Physical Sciences Research Council, project reference \linebreak EP/W012154/1.

\section{Definitions and characterizations} \label{sec_defn_characterization}

In this section we present the definition of a skew bracoid, give some families of examples, and establish some elementary properties, including results for constructing an characterizing skew bracoids. 

\begin{definition} \label{defn_SBO}
A (left) \textit{skew bracoid} is a $ 5 $-tuple $ (G, \cdot, N, \star, \odot) $ such that $ (G, \cdot) $ and $ (N,\star) $ are groups and $ \odot $ is a transitive action of $ (G, \cdot) $ on $ N $ such that 
\begin{equation} \label{eqn_SBO_relation}
g \odot (\eta \star \mu) = (g \odot \eta) \star (g \odot e_{N})^{-1} \star (g \odot \mu) 
\end{equation} 
for all $ g \in G $ and $ \eta, \mu \in N $. 
\end{definition}

We shall call Equation \eqref{eqn_SBO_relation} the (left) \textit{skew bracoid relation}.

The intuition behind Definition \ref{defn_SBO} is that the group $ (N,\star) $ is analogous to the additive group of a skew brace, and the group $ (G,\cdot) $, together with its transitive action $ \odot $ on $ N $, is analogous to the multiplicative group. The skew bracoid relation \eqref{eqn_SBO_relation} stipulates that the transitive action of $ (G,\cdot) $ on $ N $ should interact with the binary operation $ \star $ on $ N $ in a manner analogous to the skew brace relation \eqref{eqn_skew_brace_relation}. 

To ease notation, we frequently suppress the notation $ \cdot $, and occasionally the notation $ \star $, where there is no risk of confusion. We retain the symbol $ \odot $ for the action of $ G $ on $ N $ in our arguments, but often suppress it when specifying skew bracoids, \textit{viz} $ (G,N) $. 
 
Unlike the theory of skew braces (in which we have two binary operations on the same set), we use the notation $ \,^{-1} $ to denote all inverses, since the group in which the inverse is being taken can be deduced from the context. Thus $ (g \odot e_{N})^{-1} $ denotes the inverse of the $ g \odot e_{N} \in N $ element with respect to $ \star $, whereas $ (g^{-1} \odot e_{N}) $ denotes the action of $ g^{-1} \in G $ on the identity element of $ N $.  

We shall say that a skew bracoid is \textit{finite} to mean that both $ G $ and $ N $ are finite. In this case, by the orbit-stabilizer theorem, the order of $ N $ divides the order of $ G $. All the skew bracoids we consider will be finite.

\begin{example} \label{eg_essentially_skew_brace}
A skew brace $ (B,\star,\cdot) $ can be viewed as a skew bracoid with $ G = (B,\cdot) $, $ N= (B,\star) $, and $ \odot = \cdot $. Where necessary, we express this as $ (B, \star, B, \cdot, \odot)  $ or $ (B,B) $. 

Conversely, if $ (G,N) $ is a skew bracoid in which $ |G|=|N| $ (or, equivalently, $ \Stab_{G}(e_{N}) = \{ e_{G} \} $) then the action of $ G $ on $ N $ is regular (i.e. transitive and fixed-point free), and so we may transport the structure of $ G $ to $ N $ via the rule 
\[ (g \odot e_{N}) \circ (h \odot e_{N}) = (gh) \odot e_{N}. \]
Having done this, the triple $ (N,\star,\circ) $ is a skew brace. Because of this, we shall say that a skew bracoid in which $ |G|=|N| $ is \textit{essentially a skew brace}. 
\end{example}

\begin{example} \label{eg_running_first_principles}
Let $ n \in \mathbb{N} $, let $ d $ be a positive divisor of $ n $, let
\[ G = \langle r, s \mid r^{n} = s^{2} = e_{G}, \; srs^{-1}=r^{-1} \rangle \cong D_{n}, \]
and let $ N = \langle \eta \rangle \cong C_{d} $. Then the rule 
\begin{equation}
r^{i}s^{j} \odot \eta^{k} = \eta^{i+(-1)^{j}k} 
\end{equation}
defines a transitive action of $ G $ on $ N $, and we have
\begin{eqnarray*}
&& (r^{i}s^{j} \odot \eta^{k}) \star (r^{i}s^{j} \odot e_{N})^{-1} \star (r^{i}s^{j} \odot \eta^{\ell}) \\
& = & \eta^{i+(-1)^{j}k} \star \eta^{-i} \star \eta^{i+(-1)^{j}\ell} \\
& = & \eta^{i+(-1)^{j}(k+\ell)} \\
& = & r^{i}s^{j} \odot \eta^{k+\ell}.
\end{eqnarray*}
Therefore the skew bracoid relation \eqref{eqn_SBO_relation} is satisfied, and so $ (G,N,\odot) $ is a skew bracoid. 
\end{example}

Certain skew bracoids arise from a natural quotienting procedure on skew braces. To describe this procedure, we recall first that the \textit{$ \gamma $-function} of a skew brace $ (B,\star,\cdot) $ is the function $ \gamma : B \rightarrow \Perm(B) $ defined by $ \,^{\gamma(b)}a = b^{-1} \star (b \cdot a) $. In fact, we have $ \gamma(b) \in \Aut(B,\star) $ for each $ b \in B $, and the map $ \gamma $ is a homomorphism from $ (B,\cdot) $ to $ \Aut(B,\star) $. A \textit{left ideal} of a skew brace $ (B,\star,\cdot) $ is a subset $ A $ of $ B $ that is a subgroup of $ B $ with respect to $ \star $, a subgroup of $ B $ with respect $ \cdot $, and that satisfies $ \,^{\gamma(B)}A = A $ (the third condition, along with either of the subgroup conditions, implies the other). A left ideal $ A $ of $ (B,\star,\cdot) $ is called a \textit{strong left ideal} if $ (A,\star) $ is normal in $ (B,\star) $, and is called an \textit{ideal} if $ (A,\star) $ is normal in $ (B,\star) $ and $ (A,\cdot) $ is normal in $ (B,\cdot) $. It is well known that quotients of skew braces by ideals are again skew braces. We can generalize this as follows

\begin{proposition} \label{prop_skew_bracoid_quotient_skew_brace}
Let $ (B,\star,\cdot) $ be a skew brace and let $ A $ be a strong left ideal. Then $ (B, \cdot, B/A, \star, \odot) $ is a skew bracoid, where $ (B/A, \star) $ denotes the quotient group of $ (B,\star) $ by $ (A,\star) $ and $ \odot $ denotes left translation of cosets with respect to $ \cdot $. 
\end{proposition}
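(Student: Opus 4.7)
The plan is to verify in sequence that (a) the quotient $(B/A,\star)$ is a well-defined group, (b) the prescribed rule $b \odot (c \star A) = (b \cdot c) \star A$ gives a well-defined transitive action of $(B,\cdot)$ on $B/A$, and (c) the skew bracoid relation is a direct consequence of the skew brace relation in $B$. Step (a) is immediate from the definition of a strong left ideal, since it requires precisely that $(A,\star)$ be normal in $(B,\star)$.

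For step (b), the principal work is checking that $\odot$ is well-defined. Suppose $c \star A = c' \star A$, so $c' = c \star a$ for some $a \in A$. Then, using the skew brace relation \eqref{eqn_skew_brace_relation},
\[ b \cdot c' = b \cdot (c \star a) = (b \cdot c) \star b^{-1} \star (b \cdot a). \]
The element $b^{-1} \star (b \cdot a)$ is exactly ${}^{\gamma(b)}a$, and since $A$ is a left ideal we have $\gamma(B)A = A$, so ${}^{\gamma(b)}a \in A$. Hence $(b \cdot c') \star A = (b \cdot c) \star A$, as required. The action axioms $(bb') \odot (c \star A) = b \odot (b' \odot (c \star A))$ and $e_B \odot (c \star A) = c \star A$ then follow from associativity of $\cdot$ and the fact that $e_B$ is the identity of $(B,\cdot)$. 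Transitivity is clear: $b \odot (e_B \star A) = b \star A$, so every coset is in the orbit of the identity coset $A$.

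For step (c), fix $b \in B$ and cosets $\eta = c \star A$, $\mu = c' \star A$, and note $e_{B/A} = A = e_B \star A$. A direct computation gives
\[ b \odot (\eta \star \mu) = b \odot ((c \star c') \star A) = \bigl(b \cdot (c \star c')\bigr) \star A, \]
while
\[ (b \odot \eta) \star (b \odot e_{B/A})^{-1} \star (b \odot \mu) = \bigl((b \cdot c) \star b^{-1} \star (b \cdot c')\bigr) \star A. \]
Applying the skew brace relation \eqref{eqn_skew_brace_relation} to $b \cdot (c \star c')$ shows these coincide.

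The only genuine obstacle is the well-definedness check in step (b); everything else is essentially formal. That check is precisely where the left ideal hypothesis $\gamma(B)A = A$ is used, confirming that this is the correct condition to impose (the normality hypothesis in the definition of strong left ideal being needed separately for step (a)). It is worth noting that the stronger ``ideal'' hypothesis (requiring also $(A,\cdot) \trianglelefteq (B,\cdot)$) is not required here, matching the intuition that $(A,\cdot)$ plays no role in defining the quotient as a set of $\star$-cosets.
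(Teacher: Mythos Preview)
Your proof is correct and follows essentially the same approach as the paper. The only presentational difference is that the paper packages the well-definedness check as the statement that the $\star$-cosets and $\cdot$-cosets of $A$ coincide (i.e.\ $b \star A = b \cdot A$ for all $b$), which is proved via the same $\gamma$-function computation you give; this formulation makes the phrase ``left translation of cosets with respect to $\cdot$'' literal and is reused later in the paper, but mathematically it is equivalent to your direct verification.
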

\begin{proof}
First we note that the cosets of $ A $ in $ (B,\star) $ and $ (B,\cdot) $ coincide: for $ b \in B $ we have
\begin{eqnarray*}
b \star A & = & \{ b \star a \mid a \in A \} \\
& = & \{ b \star \,^{\gamma(b)}a \mid a \in A \} \mbox{ since $ \gamma(b) \in \Aut(A,\star) $ } \\
& = & \{ b \star b^{-1} \star (b \cdot a) \mid a \in A \} \\
& = & \{ b \cdot a \mid a \in A \} \\
& = & b \odot A.
\end{eqnarray*}
Thus it makes sense to consider the set $ B/A $ simultaneously as a quotient group with respect to $ \star $ and as a $ (B,\cdot) $-set via left translation of cosets, and in fact we may write $ b \star A = b \odot A = bA $ without ambiguity. It is clear that the action of $ (B,\cdot) $ on $ B/A $ by left translation is transitive, so it only remains to verify that the skew bracoid relation is satisfied. For $ b,c,d \in B $ we have
\begin{eqnarray*}
b \odot ( cA \star dA ) & = & b \odot ( (c \star d) A ) \\
& = & (b \cdot (c \star d)) A  \\
& = & ( (b \cdot c) \star b^{-1} \star (b \cdot d) ) A \\
& = & (b \cdot c)A \star (bA)^{-1} \star (b \cdot d)A \\
& = & (b \odot cA) \star (b \odot A)^{-1} \star (b \odot dA).
\end{eqnarray*}
(Here $ b^{-1} $ denotes the inverse of $ b $ in $ (B,\star) $, and $ (bA)^{-1} $, $ (b \odot A)^{-1} $ denote the inverses of these cosets in $ (B/A, \star) $.) Thus the skew bracoid relation is satisfied, and so $ (B, \cdot, B/A, \star, \odot) $ is a skew bracoid. 
\end{proof}

\begin{example}\label{eg_running_quotient_of_skew_brace}
The skew bracoid $ (G,N) $ constructed in Example \ref{eg_running_first_principles} can be obtained via the procedure described in Proposition \ref{prop_skew_bracoid_quotient_skew_brace}. We write the group $ G $ as $ (G,\cdot) $, and define a new binary operation on $ G $ by $ r^{i}s^{j} \star r^{k}s^{\ell} = r^{i+k}s^{j+\ell} $; it is then easy to verify that $ (G,\star) $ is a group isomorphic to $ C_{n} \times C_{2} $ and $ (G,\star,\cdot) $ is a skew brace with $ \gamma $-function given by $ \,^{\gamma(r^{i}s^{j})} r^{k}s^{\ell} = r^{(-1)^{j}k}s^{\ell} $. It follows that subgroup $ H = \langle r^{d}, s \rangle $ of $ (G,\star) $ is a strong left ideal of $ (G,\star,\cdot) $. The quotient group $ ( G/H, \star ) $ is cyclic of order $ d $, generated by the coset $ rH $. Now the action of $ (G,\cdot) $ on $ G/H $ by left translation is given by 
\[ r^{i}s^{j} \odot (rH)^{k} = r^{i}s^{j} \odot r^{k}H = r^{i + (-1)^{j}k}H. \]
Thus $ (G/H,\star) $ is isomorphic to the group $ N $ in Example \ref{eg_running_first_principles} as a group and a $ G $-set. We shall give a formal treatment of isomorphism of skew bracoids in Section \ref{sec_homomorphisms}. 
\end{example}

\begin{remark} \label{rmk_every_skew_bracoid_quotient_of_SB}
It is natural to ask whether every skew bracoid can be obtained via the procedure described in Proposition \ref{prop_skew_bracoid_quotient_skew_brace}; at present we cannot answer this question. 
\end{remark}

In \cite{KT20} Koch and the second author introduce the notion of the \textit{opposite} of a skew brace (the same construction is studied independently by Rump in \cite{Ru19c}). Opposite skew braces have applications to the Yang-Baxter equation \cite[Section 4]{KT20},  \cite{KST20}, \cite{CJVAV22}, to enumeration and classification problems \cite{CCDC20}, \cite{Ca20}, and in Hopf-Galois theory \cite{KT22}, \cite{CS21}. In particular, they play an important role in the Stefanello-Trappeniers approach to the correspondence between skew braces and Hopf-Galois structures on Galois fields extensions \cite{ST22}. The concept of opposites extends naturally to skew bracoids:

\begin{proposition}
Let $ (G,\cdot, N,\star,\odot) $ be a skew bracoid, and let $ (N, \star^{op}) $ be the opposite group to $ N $. Then $ (G,\cdot, N,\star^{op}, \odot) $ is a skew bracoid. 
\end{proposition}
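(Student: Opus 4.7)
The plan is to verify directly that the $5$-tuple $(G,\cdot, N, \star^{op}, \odot)$ satisfies the three conditions of Definition \ref{defn_SBO}. The only nontrivial condition is the skew bracoid relation \eqref{eqn_SBO_relation}.

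First I would record a few easy observations. Passing from $(N,\star)$ to its opposite does not change the underlying set, the identity element $e_N$, or the inverse of any element (since $\eta \star \eta^{-1} = e_N$ reads equally as $\eta^{-1} \star^{op} \eta = e_N$). Hence $\odot$ remains a transitive $G$-action on $N$ viewed as a set, and $(N,\star^{op})$ is a group; it only remains to check \eqref{eqn_SBO_relation}. In the calculation I will exploit the identity
\[ x \star^{op} y \star^{op} z = z \star y \star x \]
for all $x,y,z \in N$, which follows immediately from the definition of $\star^{op}$ (associativity guarantees that both bracketings agree).

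The main (and only) step is the following computation. Fix $g \in G$ and $\eta,\mu \in N$. Using the definition of $\star^{op}$ and then the original skew bracoid relation for $(G,\cdot,N,\star,\odot)$,
\begin{align*}
g \odot (\eta \star^{op} \mu)
  &= g \odot (\mu \star \eta) \\
  &= (g \odot \mu) \star (g \odot e_N)^{-1} \star (g \odot \eta) \\
  &= (g \odot \eta) \star^{op} (g \odot e_N)^{-1} \star^{op} (g \odot \mu),
\end{align*}
where the last equality is the identity recorded above applied with $x = g \odot \eta$, $y = (g \odot e_N)^{-1}$, $z = g \odot \mu$. Since the inverse of $g \odot e_N$ is the same in $(N,\star)$ and $(N,\star^{op})$, this is precisely the skew bracoid relation \eqref{eqn_SBO_relation} for the tuple $(G,\cdot, N, \star^{op}, \odot)$, completing the proof.

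I do not expect any genuine obstacle: the argument is a one-line calculation, whose only content is the reversal identity for $\star^{op}$ combined with the given skew bracoid relation. The only pitfalls to avoid are notational, namely keeping straight that $e_N$ and inverses in $N$ are unaffected by passing to the opposite group, so that the inverse appearing in the middle of the relation is intrinsic and need not be re-interpreted.
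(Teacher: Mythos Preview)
Your proof is correct and follows essentially the same approach as the paper's own proof: both note that transitivity of $\odot$ is unchanged and then verify the skew bracoid relation via the three-line computation $g \odot (\eta \star^{op} \mu) = g \odot (\mu \star \eta) = (g \odot \mu) \star (g \odot e_N)^{-1} \star (g \odot \eta) = (g \odot \eta) \star^{op} (g \odot e_N)^{-1} \star^{op} (g \odot \mu)$. Your additional remarks about the identity and inverses being unaffected by passing to the opposite group are correct and make the argument slightly more explicit, but the core content is identical.
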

\begin{proof}
It is clear that $ \odot $ gives a transitive action of $ G $ on $ N $. We must show that the skew bracoid relation holds for $ (G,\cdot, N,\star^{op}, \odot) $. Let $ g \in G $ and $ \eta, \mu \in N $. Then we have 
\begin{eqnarray*}
g \odot (\eta \star^{op} \mu) & = & g \odot (\mu \star \eta) \\
& = & (g \odot \mu) \star (g \odot e_{N})^{-1} \star (g \odot \eta) \\
& = & (g \odot \eta) \star^{op} (g \odot e_{N})^{-1} \star^{op} (g \odot \mu). 
\end{eqnarray*}
Hence $ (G,\cdot, N,\star^{op}, \odot) $ is a skew bracoid. 
\end{proof} 

Next we turn to the question of characterizing skew bracoids. We generalize results of Guanieri and Vendramin (\cite[Proposition 1.11 and Theorem 4.2]{GV17}). They prove that, given a group $ (B,\star) $, the following are equivalent:
\begin{itemize}
\item a binary operation $ \cdot $ on $ B $ such that $ (B,\star,\cdot) $ is a skew brace; 
\item a regular subgroup of the \textit{holomorph} $ \Hol_{\star}(B) $ (this is the normalizer of $ \lambda_{\star}(B) $ in $ \Perm(B) $, and is equal to the semidirect product of $ \lambda_{\star}(B) $ and $ \Aut_{\star}(B) $);
\item a group $ G $ acting on $ (B,\star) $ by automorphisms together with a bijective $ 1 $-cocycle $ \pi : G \rightarrow B $. 
\end{itemize}

In the case of skew bracoids, we have

\begin{theorem}  \label{thm_characterizations}
Let $ (G,\cdot), (N,\star) $ be groups. The following are equivalent:
\begin{enumerate}[ref=\roman*]
\item A transitive action $ \odot $ of $ G $ on $ N $ such that $ (G,\cdot,N,\star,\odot) $ is a skew bracoid; \label{enum_thm_characterization_1}
\item a transitive subgroup $ A $ of $ \Hol(N) $ isomorphic to a quotient of $ G $; \label{enum_thm_characterization_2}
\item a homomorphism $ \gamma : G \rightarrow \Aut(N) $ and a surjective $ 1 $-cocycle $ \pi : G \twoheadrightarrow N $. \label{enum_thm_characterization_3}
\end{enumerate}
\end{theorem}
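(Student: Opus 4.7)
The plan is to establish (i) $\Leftrightarrow$ (iii) directly, then deduce the equivalence with (ii). Mirroring the skew brace case, the central tool will be the pair of functions associated to a skew bracoid $(G,\cdot,N,\star,\odot)$ given by
\[ \pi(g) = g \odot e_N, \qquad \gamma(g)(\eta) = \pi(g)^{-1} \star (g \odot \eta), \]
so that $g \odot \eta = \pi(g) \star \gamma(g)(\eta)$ for all $g \in G$ and $\eta \in N$.

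For (i) $\Rightarrow$ (iii), I would first observe that the skew bracoid relation \eqref{eqn_SBO_relation}, rewritten in terms of $\gamma(g)$, says precisely that each $\gamma(g)$ is a homomorphism $(N,\star) \to (N,\star)$; injectivity follows because $\odot$ is a $G$-action, so $\gamma(g) \in \Aut(N)$. Surjectivity of $\pi$ is immediate from transitivity of $\odot$. Expanding $(gh) \odot \eta = g \odot (h \odot \eta)$ via the decomposition above, and comparing with the direct formula for $(gh)\odot\eta$, simultaneously yields the $1$-cocycle identity $\pi(gh) = \pi(g) \star \gamma(g)(\pi(h))$ and the homomorphism identity $\gamma(gh) = \gamma(g)\gamma(h)$.

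For (iii) $\Rightarrow$ (i), I would define $g \odot \eta := \pi(g) \star \gamma(g)(\eta)$. Since $\gamma$ is a group homomorphism, $\gamma(e_G) = \mathrm{id}_N$, and the cocycle identity at $g = h = e_G$ then forces $\pi(e_G) = e_N$; combined with multiplicativity of $\gamma$ and the cocycle, this makes $\odot$ a group action. Transitivity follows since $g \odot e_N = \pi(g)$ and $\pi$ is surjective, and the skew bracoid relation is a direct one-line computation using that each $\gamma(g)$ is a $\star$-homomorphism.

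For (i) $\Rightarrow$ (ii), the rule $\rho(g)(\eta) = g \odot \eta$ defines a homomorphism $\rho : G \to \Perm(N)$ whose image $A$ acts transitively on $N$. The decomposition $\rho(g) = \lambda_{\pi(g)} \circ \gamma(g)$ places $\rho(g)$ inside $\Hol(N) = \lambda(N) \rtimes \Aut(N)$, so $A \leq \Hol(N)$ and $A \cong G/\ker\rho$ is a quotient of $G$. Conversely, for (ii) $\Rightarrow$ (iii), an isomorphism of $A$ with a quotient of $G$ provides a surjection $\phi : G \twoheadrightarrow A$; writing each $\phi(g) \in \Hol(N)$ uniquely as $\lambda_{\pi(g)} \circ \gamma(g)$ with $\pi(g) \in N$ and $\gamma(g) \in \Aut(N)$ recovers the data of (iii), with the cocycle and homomorphism identities falling out of the semidirect product structure on $\Hol(N)$. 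The main obstacle is bookkeeping: choosing the normalization $\gamma(g)(\eta) = \pi(g)^{-1}\star(g\odot\eta)$ so that $\gamma(g)$ genuinely lands in $\Aut(N)$ is essential, and one must verify that the composition rule $\gamma(g) \circ \lambda_{\pi(h)} = \lambda_{\gamma(g)(\pi(h))} \circ \gamma(g)$ in $\Hol(N)$ matches the cocycle identity exactly.
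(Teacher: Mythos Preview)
Your proposal is correct and uses essentially the same ideas as the paper: the key decomposition $g\odot\eta = \pi(g)\star\gamma(g)(\eta)$ with $\pi(g)=g\odot e_N$ and $\gamma(g)(\eta)=(g\odot e_N)^{-1}\star(g\odot\eta)$, the verification that $\gamma(g)\in\Aut(N)$ via the skew bracoid relation, and the identification of $\lambda_\odot(g)$ with $\lambda_{\pi(g)}\circ\gamma(g)\in\Hol(N)$. The only difference is organizational: the paper runs the implications cyclically as (i)$\Rightarrow$(ii)$\Rightarrow$(iii)$\Rightarrow$(i), extracting the homomorphism property of $\gamma$ from the projection $\Hol(N)\twoheadrightarrow\Aut(N)$ in the step (ii)$\Rightarrow$(iii), whereas you prove (i)$\Leftrightarrow$(iii) directly (obtaining both the cocycle and the multiplicativity of $\gamma$ from the action axiom $(gh)\odot\eta=g\odot(h\odot\eta)$) and then link to (ii) afterwards.
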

\begin{proof}
First suppose that \eqref{enum_thm_characterization_1} holds. Let $ \lambda_{\odot} : G \rightarrow \Perm(N) $ be the permutation representation of the action $ \odot $. Then $ \lambda_{\odot}(G) $ is a transitive subgroup of $ \Perm(N) $ isomorphic to a quotient of $ G $. To show that $ \lambda_{\odot}(G) \subseteq \Hol(N) $ we consider the functions $ \gamma(g) \in \Perm(N) $ defined by
\[ \gamma(g) = \lambda_{\star}(g \odot e_{N})^{-1} \lambda_{\odot}(g), \]
so that
\[ \,^{\gamma(g)} \eta = (g \odot e_{N})^{-1} \star (g \odot \eta) \mbox{ for all } g \in G \mbox{ and } \eta \in N. \]
We claim that $ \gamma(g) \in \Aut(N) $ for all $ g \in G $; if this holds then we quickly obtain
\[ \lambda_{\odot}(g) = \lambda_{\star}(g \odot e_{N}) \gamma(g) \in \lambda_{\star}(N)\Aut(N) = \Hol(N) \mbox{ for all } g \in G. \]
To prove the claim, let $ g \in G $ and $ \eta, \mu \in N $; then we have
\begin{eqnarray*}
\,^{\gamma(g)} (\eta \star \mu) & = & (g \odot e_{N})^{-1} \star (g \odot (\eta \star \mu)) \\
& = & (g \odot e_{N})^{-1} \star (g \odot \eta) \star (g \odot e_{N})^{-1} \star (g \odot \mu) \mbox{ by \eqref{eqn_SBO_relation}} \\
& = & \left( \,^{\gamma(g)} \eta \right) \star \left( \,^{\gamma(g)} \mu \right).
\end{eqnarray*}
Thus $ \gamma(g) $ is a homomorphism. To show that it is bijective, suppose that $ \eta \in \ker(\gamma(g)) $. Then
\begin{eqnarray*}
&& (g \odot e_{N})^{-1} \star (g \odot \eta) = e_{N} \\
& \Rightarrow &  (g \odot \eta) = (g \odot e_{N}) \\
& \Rightarrow & \eta = e_{N}.
\end{eqnarray*}
Hence $ \ker(\gamma(g)) $ is trivial and so, since $ N $ is finite, $ \gamma(g) $ is bijective. Therefore $ \gamma(g) \in \Aut(N) $ as claimed, so $ A=\lambda_{\odot}(G) $ is a transitive subgroup of $ \Hol(N) $ isomorphic to a quotient of $ G $, and so \eqref{enum_thm_characterization_2} holds. 

Next suppose that \eqref{enum_thm_characterization_2} holds. Let $ \delta : G \rightarrow A $ be a surjective homomorphism, and for each $ g \in G $ write $ \delta(g) = \lambda_{\star}(\pi(g))\gamma(g) $ with $ \pi(g) \in N $ and $ \gamma(g) \in \Aut(N) $. Then $ \gamma : G \rightarrow \Aut(N) $ is the composition the homomorphism $ \delta $ with the projection onto the automorphism component; it is therefore a homomorphism, and so yields and action of $ G $ on $ N $ by automorphisms. Since $ A $ is a transitive subgroup of $ \Hol(N) $, we have
\[ N = A[e_{N}] = \{ \pi(g) \star \,^{\gamma(g)} e_{N} \mid g \in G \} = \{ \pi(g) \mid g \in G \}, \]
so the map $ \pi : G \rightarrow N $ is surjective. Finally, the assumption that $ \delta $ is a homomorphism implies that for $ g,h \in G $ we have 
\begin{eqnarray*}
&& \delta(gh)[e_{N}] = \delta(g)(\delta(h)[e_{N}]) \\
& \Rightarrow & \pi(gh) \,^{\gamma(gh)}e_{N} = \pi(g) \star \,^{\gamma(g)} [ \pi(h) \star \,^{\gamma(h)}e_{N} ] \\
& \Rightarrow & \pi(gh) =  \pi(g) \star \,^{\gamma(g)} \pi(h). 
\end{eqnarray*}
Thus $ \pi $ is a surjective $ 1 $-cocycle for the action of $ G $ on $ N $ by automorphisms via $ \gamma $, and so \eqref{enum_thm_characterization_3} holds. 

Finally, suppose that \eqref{enum_thm_characterization_3} holds. For $ g \in G $ and $ \eta \in N $ define $ g \odot \eta \in N $
\[ g \odot \eta = \pi(g) \star \,^{\gamma(g)} \eta. \]
The assumption that $ \pi $ is a $ 1 $-cocycle for the action of $ G $ on $ N $ by automorphisms via $ \gamma $ implies $ \odot $ defines an action of $ G $ on $ N $: it is easy to see that $ \pi(e_{G})=e_{N} $, so $ e_{G} \odot \eta = \eta $ for all $ \eta \in N $, and for $ g, h \in G $ and $ \eta \in N $ we have
\begin{eqnarray*}
g \odot ( h \odot \eta ) & = & \pi(g) \star \,^{\gamma(g)} [ \pi(h) \star \,^{\gamma(h)} \eta ] \\
& = & \pi(g) \star \,^{\gamma(g)} \pi(h) \star \,^{\gamma(g)\gamma(h)} \eta  \\
& = & \pi(gh) \star \,^{\gamma(gh)} \eta  \\
& = & (gh) \odot \eta.
\end{eqnarray*}
Furthermore, the assumption that $ \pi $ is surjective implies that this action is transitive: we have
\[ G \odot e_{N} = \{ \pi(g) \star \,^{\gamma(g)} e_{N} \mid g \in G \} = \{ \pi(g) \mid g \in G \} = N. \]
Finally, for $ g \in G $ and $ \eta, \mu \in N $ we have
\begin{eqnarray*}
g \odot (\eta \star \mu) & = & \pi(g) \star \,^{\gamma(g)}[\eta \star \mu] \\
& = & \pi(g) \star \,^{\gamma(g)}\eta \star \,^{\gamma(g)}\mu \\
& = & \pi(g) \star \,^{\gamma(g)}\eta \star \pi(g)^{-1} \star \pi(g) \star \,^{\gamma(g)}\mu \\
& = & (g \odot \eta) \star (g \odot e_{N})^{-1} \star (g \odot \mu), 
\end{eqnarray*}
so \eqref{eqn_SBO_relation} holds. 
\end{proof}

\begin{example}\label{eg_running_characterization}
Recall the skew bracoid  $ (G,N) $ constructed in Example \ref{eg_running_first_principles}. Beginning with the action of $ G $ on $ N $ given in that example, we have 
\[ \lambda_{\odot}(r^{i}s^{j})[\eta^{k}] = \eta^{i+(-1)^{j}k} = \lambda_{\star}(\eta^{i}) \iota^{j}[\eta^{k}], \]
where $ \iota \in \Aut(N) $ is the automorphism given by inversion. Thus $ \lambda_{\odot}(G) = \lambda_{\star}(N)\langle \iota \rangle \subset \Hol(N) $, and $ \gamma(r^{i}s^{j}) = \iota^{j} $. 

Secondly, (for example) the function $ \delta : G \rightarrow \lambda_{\star}(N)\langle \iota \rangle $ defined by $ \delta(r^{i}s^{j}) = \lambda_{\star}(\eta^{i}) \iota^{j} $ is a surjective homomorphism; the projection onto the $ \Aut(N) $ component is $ \gamma(r^{i}s^{j}) = \iota^{j} $, and the corresponding $ 1 $-cocycle is given by $ \pi(r^{i}s^{j}) = \eta^{i} $. Explicitly, we have
\begin{eqnarray*}
\pi( r^{i}s^{j} r^{k}s^{\ell} ) & = & \pi( r^{i+(-1)^{j}k} ) \\
& = & \eta^{i+(-1)^{j}k} \\
& = & \eta^{i} \star \iota^{j} (\eta^{k}) \\
& = & \pi(r^{i}s^{j}) \,^{\gamma(r^{i}s^{j})} \pi(r^{k}s^{\ell}). 
\end{eqnarray*}

Finally, from $ \gamma $ and $ \pi $ we may define a transitive action of $ G $ on $ N $ by 
\begin{eqnarray*}
r^{i}s^{j} \odot \eta^{k} & = & \pi(r^{i}s^{j}) \star \,^{\gamma(r^{i}s^{j})} \eta^{k} \\
& = & \eta^{i} \eta^{(-1)^{j}k} \\
& = & \eta^{i+(-1)^{j}k}.
\end{eqnarray*}
We see that we recover the original action of $ G $ on $ N $, as in Example \ref{eg_running_first_principles}. 
\end{example}

From the proof of Theorem \ref{thm_characterizations} we extract the following, which will be used frequently in what follows. 

\begin{definition} \label{defn_gamma_function}
Let $ (G,N) $ be a skew bracoid. The homomorphism $ \gamma : G \rightarrow \Aut(N) $ defined by
\[ \,^{\gamma(g)} \eta = (g \odot e_{N})^{-1} \star (g \odot \eta) \mbox{ for all } g \in G \mbox{ and } \eta \in N \]
is called the \textit{ $\gamma $-function} of the skew bracoid. 
\end{definition}

\begin{example} \label{eg_quotient_gamma_function}
When a skew brace is viewed as a skew bracoid, the $ \gamma $-function of the skew bracoid coincides with that of the skew brace. 

If $ (B, B/A) $ is a skew bracoid arising from a skew brace $ (B,\star,\cdot) $ and a strong left ideal $ A $, as described in Proposition \ref{prop_skew_bracoid_quotient_skew_brace}, then its $ \gamma $-function is given by 
\begin{eqnarray*}
\,^{\gamma(b)} (cA) & = & (b \odot e_{B}A)^{-1} \star (b \odot cA) \\
& = & (bA)^{-1} \star ((b \cdot c)A) \\
& = & (b^{-1} \star (b \cdot c)) A \\
& = & (\,^{\gamma(b)}c)A. 
\end{eqnarray*}
\end{example}

\begin{example} \label{eg_running_gamma_function}
The skew bracoid  $ (G,N) $ constructed in Example \ref{eg_running_first_principles} has $ \gamma $-function given by 
\begin{eqnarray*}
\,^{\gamma(r^{i}s^{j})} \eta^{k} & = & (r^{i}s^{s} \odot e_{N})^{-1} \star (r^{i}s^{j} \odot \eta^{k}) \\
& = & \eta^{-i} \star \eta^{i+(-1)^{j}k} \\
& = & \eta^{(-1)^{j}k}. 
\end{eqnarray*}
\end{example}

We record a useful consequence of the fact that the $ \gamma $-function of a skew bracoid $ (G,N) $ has values in $ \Aut(N) $.

\begin{proposition} \label{prop_useful_identites} 
Let $ (G,N) $ be a skew bracoid. Then for all $ g \in G $ and $ \eta \in N $ we have
\[ (g \odot e_{N})^{-1} \star (g \odot \eta^{-1}) \star (g \odot e_{N})^{-1} = (g \odot \eta)^{-1}. \]
\end{proposition}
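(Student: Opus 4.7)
The plan is to exploit the fact, established in Theorem \ref{thm_characterizations} (and recorded in Definition \ref{defn_gamma_function}), that $\gamma(g) \in \Aut(N)$ for every $g \in G$. In particular, $\gamma(g)$ preserves inversion in $(N,\star)$, so that $\,^{\gamma(g)}(\eta^{-1}) = (\,^{\gamma(g)}\eta)^{-1}$.

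First I would rewrite both sides of the desired identity in terms of $\gamma(g)$. Using the defining formula $\,^{\gamma(g)}\eta = (g \odot e_{N})^{-1} \star (g \odot \eta)$ applied with $\eta$ replaced by $\eta^{-1}$, the left-hand expression $(g \odot e_{N})^{-1} \star (g \odot \eta^{-1})$ equals $\,^{\gamma(g)}(\eta^{-1})$. Since $\gamma(g)$ is an automorphism of $(N,\star)$, this in turn equals $(\,^{\gamma(g)}\eta)^{-1} = \bigl( (g \odot e_{N})^{-1} \star (g \odot \eta) \bigr)^{-1} = (g \odot \eta)^{-1} \star (g \odot e_{N})$.

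Then I would simply right-multiply (with respect to $\star$) by $(g \odot e_{N})^{-1}$ to obtain
\[ (g \odot e_{N})^{-1} \star (g \odot \eta^{-1}) \star (g \odot e_{N})^{-1} = (g \odot \eta)^{-1}, \]
as required. There is no real obstacle here; the identity is essentially a restatement of the fact that $\gamma(g)$ commutes with inversion, and the whole argument is a two-line rearrangement once the $\gamma$-function is brought in.
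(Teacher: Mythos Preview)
Your proof is correct and follows essentially the same approach as the paper's: both use that $\gamma(g)\in\Aut(N)$ to obtain $\,^{\gamma(g)}(\eta^{-1})=(\,^{\gamma(g)}\eta)^{-1}$, expand via the definition of $\gamma$, and then right-multiply by $(g\odot e_N)^{-1}$.
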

\begin{proof}
Let $ g \in G $ and $ \eta \in N $. Since $ \gamma(g) \in \Aut(N) $ we have 
\begin{eqnarray*}
&& \,^{\gamma(g)}(\eta^{-1}) = \left( \,^{\gamma(g)}\eta \right)^{-1} \\
& \Rightarrow & (g \odot e_{N})^{-1} \star (g \odot \eta^{-1}) = ( (g \odot e_{N})^{-1} \star (g \odot \eta) )^{-1} \\
& \Rightarrow & (g \odot e_{N})^{-1} \star (g \odot \eta^{-1}) =  (g \odot \eta)^{-1} \star (g \odot e_{N}),
\end{eqnarray*}
from which the identity follows immediately. 
\end{proof}

If $ (B,\star,\cdot) $ is a skew brace then the image of the left regular representation $ \lambda_{\bullet} : (B,\cdot) \rightarrow \Perm(B) $ is a regular subgroup of $ \Hol_{\star}(B) $ (see \cite[Theorem 4.2]{GV17}). The stabilizer of each element of $ B $ with respect to $ \cdot $ is trivial and so, \textit{a fortiori}, $ \ker(\lambda_{\bullet}) $ is trivial. In a skew bracoid $ (G,\cdot,N,\star,\odot) $ the analogue of the first of these is almost always false: by the orbit-stabilizer theorem we have $ |G| = |\Stab_{G}(\eta)||N| $ for each $ \eta \in N $ so, unless $ (G,N) $ is essentially a skew brace, each $ \Stab_{G}(\eta) $ is nontrivial (of course, these stabilizers are mutually conjugate in $ G $). However, it may still happen that $ \ker(\lambda_{\odot}) $ (the intersection of these stabilizers) is trivial, and so we make the following definition:

\begin{definition} \label{defn_reduced}
We say that a skew bracoid $ (G,N) $ is \textit{reduced} to mean that $ \lambda_{\odot} $ is injective or, equivalently, that the action of $ G $ on $ N $ via $ \odot $ is faithful.
\end{definition}

\begin{example}
A skew brace, viewed as a skew bracoid, is reduced. 

If $ (B, B/A) $ is a skew bracoid arising from a skew brace $ (B,\star,\cdot) $ and a strong left ideal $ A $, as described in Proposition \ref{prop_skew_bracoid_quotient_skew_brace}, then the stabilizer of the identity coset $ e_{B}A $ is precisely $ A $, and more generally the stabilizer of a coset $ b \odot A $ is $ b \cdot A \cdot b^{-1} $. It follows that $ \ker(\lambda_{\odot}) = \bigcap_{b \in B} b \cdot A \cdot b^{-1} $, the normal core of $ A $ in $ (B,\cdot) $. Therefore $ (B, B/A) $ is reduced if and only if $ (A, \cdot) $ is core-free. 
\end{example}

\begin{example} \label{eg_running_kernel_of_lambda}
Recall the skew bracoid $ (G,N) $ constructed in Example \ref{eg_running_first_principles}. Since $ N $ is cyclic of order $ d $ and $ r^{i}s^{j} \odot \eta^{k} = \eta^{i+(-1)^{j}k} $, we have $ \ker(\lambda_{\odot}) = \langle r^{d} \rangle $, so $ (G,N) $ is reduced if and only if $ d=n $. 
\end{example}

If $ (G,N) $ is a skew bracoid that is not reduced then the elements of $ \ker(\lambda_{\odot}) $ have no effect on the structure of $ N $ as a $ G $-set, and so are, in some sense, superfluous. It is therefore tempting to revise our definition of a skew bracoid and insist that $ \odot $ should be a \textit{faithful} action of $ G $ on $ N $. However, we shall see in Section \ref{sec_substructures_quotients} that subskew bracoids and quotient skew bracoids of a reduced skew bracoid may not themselves be reduced; therefore the additional flexibility in Definition \ref{defn_SBO} is necessary. On the other hand, given a group $ N $, we clearly require a method for relating skew bracoids $ (G,N) $ and $ (G',N) $ in which the actions of $ G $ and $ G' $ on $ N $ are ``essentially the same". This motivates the following:

\begin{definition} \label{defn_equivalent}
Two skew bracoids $ (G,N) $ and $ (G',N') $ are called \textit{equivalent} if $ (N,\star)=(N',\star') $ and $ \lambda_{\odot}(G) = \lambda_{\odot'}(G') \subseteq \Hol(N) $. This is denoted $ (G,N) \sim (G,N') $.
\end{definition}

The intuition behind this definition is the result from the theory of skew braces that if $ (B,\star) $ is a group and $ \cdot, \cdot' $ are two further binary operations on $ B $ such that $ (B,\star,\cdot) $ and $ (B,\star,\cdot') $ are skew braces, then we have $ (B,\star,\cdot) = (B,\star,\cdot') $ if and only if $ \lambda_{\bullet}(B) = \lambda_{\bullet'}(B) $ inside $ \Hol_{\star}(B) $. 

It is clear that equivalence of skew bracoids is an equivalence relation. 

\begin{proposition} \label{prop_reduced_form}
Let $ (G,N) $ be a skew bracoid, let $ K = \ker(\lambda_{\odot}) $, and let $ \bar{G} = G / K $. Then the group $ \bar{G} $ acts on $ N $ via $ (gK) \odot \eta = g \odot \eta $, and $ (\bar{G}, N) $ is a reduced skew bracoid, called the \textit{reduced form} of $ (G,N) $. 
\end{proposition}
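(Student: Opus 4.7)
The plan is to verify each claim in turn: that the proposed action is well-defined, transitive, and satisfies the skew bracoid relation, and finally that the resulting skew bracoid is reduced. No step is substantive enough to be a real obstacle; the content is essentially bookkeeping, with the only point requiring care being that well-definedness is exactly what the choice $K = \ker(\lambda_\odot)$ was designed to guarantee.

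First I would check that the rule $(gK) \odot \eta := g \odot \eta$ gives a well-defined function $\bar{G} \times N \to N$. If $gK = hK$ then $h = gk$ for some $k \in K$, so $k \odot \eta = \eta$ for every $\eta \in N$ and hence $h \odot \eta = g \odot (k \odot \eta) = g \odot \eta$. The axioms of a group action then descend immediately from those of $\odot$, using $(gK)(hK) = (gh)K$; transitivity of the new action is inherited from the transitivity of $\odot$.

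Next I would verify the skew bracoid relation. For $g \in G$ and $\eta, \mu \in N$, expanding via the definition of the $\bar{G}$-action and then applying \eqref{eqn_SBO_relation} for $(G,N)$ gives
\[ (gK) \odot (\eta \star \mu) = g \odot (\eta \star \mu) = (g \odot \eta) \star (g \odot e_{N})^{-1} \star (g \odot \mu), \]
and the right-hand side equals $((gK) \odot \eta) \star ((gK) \odot e_N)^{-1} \star ((gK) \odot \mu)$ by the same definition. Thus $(\bar{G}, N)$ is a skew bracoid.

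Finally I would show it is reduced. Writing $\lambda_{\bar{\odot}} : \bar{G} \to \Perm(N)$ for the permutation representation of the new action, we have $\lambda_{\bar{\odot}}(gK) = \lambda_\odot(g)$ by construction. Hence $\lambda_{\bar{\odot}}$ is precisely the factorization of $\lambda_\odot$ through the quotient $G/\ker(\lambda_\odot) = \bar{G}$, which by the first isomorphism theorem is injective. Therefore $\bar{G}$ acts faithfully on $N$, so $(\bar{G}, N)$ is reduced by Definition \ref{defn_reduced}.
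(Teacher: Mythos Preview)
Your proof is correct and follows essentially the same approach as the paper's: both check well-definedness from the definition of $K$, note that transitivity and the skew bracoid relation descend directly, and then verify faithfulness. The only cosmetic difference is that you phrase the final step via the first isomorphism theorem, whereas the paper argues elementwise that $(gK)\odot\eta=\eta$ for all $\eta$ forces $g\in K$; these are the same argument.
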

\begin{proof}
If $ gK = hK $ for some $ g,h \in G $ then $ g^{-1}h \in K $, so $ g^{-1}h \odot \eta = \eta $ for all $ \eta \in N $, and so $ g \odot \eta = h \odot \eta $ for all $ \eta \in N $; therefore the rule in the statement of the proposition is well defined. It follows quickly from the fact that $ (G,N) $ is a skew bracoid that this rule defines a transitive action of $ \bar{G} $ on $ N $ and that the skew bracoid relation is satisfied. Therefore $ (\bar{G},N) $ is a skew bracoid. To show that it is reduced, suppose that $ gK \in \bar{G} $ is such that $ (gK) \odot \eta = \eta $ for all $ \eta \in N $. Then $ g \odot \eta = \eta $ for all $ \eta \in N $, so $ g \in K $, so $ gK = eK $. Therefore $ (\bar{G},N) $ is a reduced skew bracoid. 
\end{proof}

From the definition of equivalence we obtain immediately

\begin{corollary}
A skew bracoid is equivalent to its reduced form. 
\end{corollary}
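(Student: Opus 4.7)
The plan is to unpack the definition of equivalence and observe that both conditions are immediate from the construction of the reduced form. Write $(\bar G, N)$ for the reduced form of $(G,N)$, with action $\bar\odot$ defined by $(gK) \bar\odot \eta = g \odot \eta$, as in Proposition \ref{prop_reduced_form}. Equivalence requires two things: that the groups acted upon agree, and that the images of the permutation representations coincide inside $\Hol(N)$.

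First, the group $N$ (with operation $\star$) is literally the same in the skew bracoid $(G,N)$ and in its reduced form $(\bar G, N)$, so the first condition of Definition \ref{defn_equivalent} is satisfied by construction.

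Secondly, I would compare the permutation representations. Unwinding the definition of $\bar\odot$, for each $g \in G$ and $\eta \in N$ we have
\[ \lambda_{\bar\odot}(gK)[\eta] = (gK) \bar\odot \eta = g \odot \eta = \lambda_\odot(g)[\eta], \]
so $\lambda_{\bar\odot}(gK) = \lambda_\odot(g)$ as elements of $\Perm(N)$. Since every element of $\bar G$ is of the form $gK$ for some $g \in G$, and every element of $G$ yields such a coset, it follows that
\[ \lambda_{\bar\odot}(\bar G) = \{ \lambda_\odot(g) \mid g \in G \} = \lambda_\odot(G) \]
as subsets of $\Hol(N)$ (the containment in $\Hol(N)$ being guaranteed by the proof of Theorem \ref{thm_characterizations} applied to either skew bracoid).

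Hence both conditions of Definition \ref{defn_equivalent} hold, and $(G,N) \sim (\bar G, N)$. There is no substantive obstacle here: the statement is essentially a tautology given how the action of $\bar G$ on $N$ was defined, and the only thing to verify is that passing to the quotient by $K = \ker(\lambda_\odot)$ does not alter the image of the permutation representation, which is exactly what a kernel allows one to do.
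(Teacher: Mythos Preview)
Your proof is correct and follows exactly the approach the paper intends: the paper in fact gives no proof at all, stating only that the result follows immediately from the definition of equivalence. Your argument simply makes explicit what ``immediately'' means here, namely that passing to the quotient by $K = \ker(\lambda_{\odot})$ leaves the image $\lambda_{\odot}(G) \subseteq \Hol(N)$ unchanged.
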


\begin{example} \label{eg_running_reduced_form}
Recall from Example \ref{eg_running_kernel_of_lambda} that for the skew bracoid $ (G,N) $ constructed in Example \ref{eg_running_first_principles} we have $ \ker(\lambda_{\odot}) = \langle r^{d} \rangle $. Therefore for this skew bracoid we have $ \bar{G} = G / \langle r^{d} \rangle \cong D_{n/d} $.  
\end{example}

Combining Proposition \ref{prop_reduced_form} with Theorem \ref{thm_characterizations} we find

\begin{corollary}
Given a group $ N $, there is a bijection between transitive subgroups of $ \Hol(N) $ and equivalence classes of skew bracoids $ (G,N) $. 
\end{corollary}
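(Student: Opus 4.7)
The plan is to construct the bijection explicitly using Theorem \ref{thm_characterizations} in both directions, and then observe that the definition of equivalence makes the verification essentially tautological.

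First I would define the forward map. Given an equivalence class of skew bracoids on the group $N$, represented by $(G,N)$, send it to the subgroup $\lambda_{\odot}(G) \subseteq \Hol(N)$. By the proof of Theorem \ref{thm_characterizations} (the implication \eqref{enum_thm_characterization_1} $\Rightarrow$ \eqref{enum_thm_characterization_2}), this is indeed a transitive subgroup of $\Hol(N)$, and by the very definition of equivalence (Definition \ref{defn_equivalent}) the subgroup does not depend on the choice of representative, so the map is well-defined.

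Injectivity is immediate: if $(G,N)$ and $(G',N)$ are two skew bracoids on the group $N$ with $\lambda_{\odot}(G) = \lambda_{\odot'}(G')$ inside $\Hol(N)$, then by definition $(G,N) \sim (G',N)$, so they determine the same equivalence class.

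For surjectivity, let $A$ be any transitive subgroup of $\Hol(N)$. Since $A$ is trivially isomorphic to a quotient of itself (via the identity), Theorem \ref{thm_characterizations} (the implication \eqref{enum_thm_characterization_2} $\Rightarrow$ \eqml{enum_thm_characterization_1}) produces a skew bracoid structure $(A, \cdot, N, \star, \odot)$ in which the action of $A$ on $N$ is just its action as a subgroup of $\Perm(N)$. Consequently $\lambda_{\odot}$ is the inclusion $A \hookrightarrow \Perm(N)$, and $\lambda_{\odot}(A) = A$. Thus the equivalence class of $(A,N)$ maps to $A$, proving surjectivity. The main (negligible) obstacle is simply untangling the notation to confirm that the skew bracoid produced by Theorem \ref{thm_characterizations} from $A$ itself has its $\lambda_{\odot}$ image equal to $A$; this is clear from the construction in that theorem, where $\lambda_{\star}(\pi(g))\gamma(g)$ recovers the given element of $A$.
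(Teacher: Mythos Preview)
Your argument is correct and essentially matches the paper's, which simply states that the corollary follows by combining Proposition \ref{prop_reduced_form} with Theorem \ref{thm_characterizations}. The only cosmetic difference is that the paper routes through the reduced form (each equivalence class contains a unique reduced skew bracoid, and for a reduced skew bracoid $\lambda_{\odot}$ is injective, identifying it with its image in $\Hol(N)$), whereas you bypass reduced forms and work directly from the definition of equivalence; both amount to the same verification. One typographical point: you have written \verb|\eqml| where you meant \verb|\eqref|.
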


From the definition of the $ \gamma $-function of a skew bracoid (Definition \ref{defn_gamma_function}) we see that if $ (G,N) $ is a skew bracoid then $ K = \ker(\lambda_{\odot}) \subseteq \ker(\gamma) $. Therefore $ \gamma $ factors through $ G / \ker(\lambda_{\odot}) $, and the $ \gamma $-function of $ (\bar{G},N) $ is given by $ \,^{\bar{\gamma}(gK)} \eta = \,^{\gamma(g)} \eta $ for all $ g \in G $ and $ \eta \in N $. 

\section{Substructures and quotients} \label{sec_substructures_quotients}

The natural next step in our development of the theory of skew bracoids is to study substructures and quotients. Recall that if $ (B,\star,\cdot) $ is a skew brace then a subset $ A $ of $ B $ is called a \textit{subskew brace} if it is closed under both $ \star $ and $ \cdot $, is called a \textit{left ideal} if it is a subskew brace satisfying $ \,^{\gamma(B)}A=A $, is called an \textit{strong left ideal} if it is a left ideal and is normal in $ (B,\star) $, and is called an \textit{ideal} if it is a strong left ideal that if also normal in $ (B,\cdot) $. If $ A $ is an ideal of $ (B,\star,\cdot) $ then $ (B/A, \star, \cdot) $ is a skew brace \cite[Lemma 2.3]{GV17}; we have already seen in Proposition \ref{prop_skew_bracoid_quotient_skew_brace} that if $ A $ is a strong left ideal of $ (B,\star,\cdot) $ then $ (B,\cdot,B/A,\star,\odot) $ is a skew bracoid.  

To ease notation in this section, we frequently suppress the notation for the binary operations $ \cdot $ and $ \star $.

\begin{definition}
A \textit{subskew bracoid} of a skew bracoid $ (G,N) $ consists of a subgroup $ H $ of $ G $ and a subgroup $ M $ of $ N $ such that $ (H,M) $ is a skew bracoid. 
\end{definition}

\begin{example}
When a skew brace is viewed as a skew bracoid, the subskew bracoids are precisely the subskew braces.

If $ (B, B/A) $ is a skew bracoid arising from a skew brace $ (B,\star,\cdot) $ and a strong left ideal $ A $, as described in Proposition \ref{prop_skew_bracoid_quotient_skew_brace} and $ C $ is a subskew brace of $ (B,\star,\cdot) $ that contains $ A $, then $ (C, C/A) $ is a subskew bracoid of $ (B,B/A) $. 
\end{example}

\begin{example} \label{example_running_sub_skew_bracoid}
Recall the skew bracoid $ (G,N) $ constructed in Example \ref{eg_running_first_principles}. Let $ f $ be a positive divisor of $ d $, let $ M = \langle \eta^{f} \rangle$, and let $ H = \langle r^{f}, s \rangle $. Then $ (H,M) $ is a subskew bracoid of $ (G,N) $. 
\end{example}

A subskew bracoid of a reduced skew bracoid need not be reduced:

\begin{example} \label{eg_running_sub_bracoid_not_reduced}
Consider the skew bracoid $ (G,N) $ constructed in Example \ref{eg_running_first_principles} in the particular case in which $ n=d=4 $. By Example \ref{eg_running_kernel_of_lambda}, $ (G,N) $ is reduced. Let $ M = \langle \eta^{2} \rangle $ and $ H = \langle r^{2}, s \rangle $; then $ (H,M) $ is a subskew bracoid of $ (G,N) $. But $ (H,N) $ is not reduced: we have $ s \odot \eta^{2k} = \eta^{-2k} = \eta^{2k} $ for all $ k $, so $ \langle s \rangle $ is contained in (and in fact equals) the kernel of $ \lambda_{\odot} $ in $ H $. 
\end{example}

\begin{definition} \label{defn_left_ideal_and_ideal}
A \textit{left ideal} of a skew bracoid $ (G,N) $ is a subgroup $ M $ of $ N $ such that $ \,^{\gamma(G)} M = M $. An \textit{ideal} of $ (G,N) $ is a left ideal which is normal in $ N $. 
\end{definition}

\begin{example} \label{eg_quotient_ideals}
When a skew brace is viewed as a skew bracoid, the left ideals of the skew bracoid are precisely the left ideals of the skew brace. However, the ideals of the skew bracoid are the strong left ideals of the skew brace. 

If $ (B, B/A) $ is a skew bracoid arising from a skew brace $ (B,\star,\cdot) $ and a strong left ideal $ A $, as described in Proposition \ref{prop_skew_bracoid_quotient_skew_brace}, then we have seen  in Example \ref{eg_quotient_gamma_function} that the $ \gamma $-function of $ (B, B/A) $ is given by $ \,^{\gamma(b)}(cA) = (\,^{\gamma(b)}c)A $. Therefore the left ideals (resp. ideals) of $ (B, B/A) $ are the sets $ C/A $ where $ C $ is a left ideal (resp. strong left ideal) of $ B $ that contains $ A $. 
\end{example}

\begin{example} \label{eg_running_ideals}
Recall the skew bracoid $ (G,N) $ constructed in Example \ref{eg_running_first_principles}. Let $ f $ be a positive divisor of $ d $, and let $ M = \langle \eta^{f} \rangle $. By Example \ref{eg_running_gamma_function} the $ \gamma $-function of $ (G,N) $ is given by $ \,^{\gamma(r^{i}s^{j})} \eta^{k} = \eta^{(-1)^{j}k} $; hence $ M $ is closed under $ \gamma(G) $ and is therefore a left ideal of $ (G,N) $. In fact, since $ N $ is abelian, $ M $ is an ideal of $ (G,N) $. 
\end{example}

In the theory of skew braces we have the useful result that if $ (B,\star,\cdot) $ is a skew brace and $ A $ is a subgroup of $ (B,\star) $ such that $ \,^{\gamma(B)}A=A $ then $ A $ is necessarily a subgroup of $ (B,\cdot) $, and therefore a left ideal of $ (B,\star,\cdot) $. We have a skew bracoid analogue of this result:

\begin{proposition} \label{prop_left_ideal_sub_SBO}
Let $ (G,N) $ be a skew bracoid and let $ M $ be a left ideal of $ (G,N) $. Define
\[ G_{M} = \{ g \in G \mid g \odot \mu \in M \mbox{ for all } \mu \in M \}. \]
Then $ (G_{M}, M) $ is a subskew bracoid of $ (G,N) $. 
\end{proposition}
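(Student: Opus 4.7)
The plan is to verify in order that $G_M$ is a subgroup of $G$, that the restricted action of $G_M$ on $M$ is transitive, and that the skew bracoid relation is inherited from $(G,N)$.

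First I would check that $G_M$ is a subgroup. Closure is immediate: if $g,h\in G_M$ and $\mu\in M$, then $h\odot\mu\in M$, hence $(gh)\odot\mu = g\odot(h\odot\mu)\in M$. The identity $e_G$ lies in $G_M$ trivially. For inverses, I would appeal to finiteness: the cyclic subgroup $\langle g\rangle\subseteq G$ is finite, so $g^{-1}$ is a positive power of $g$ and therefore lies in $G_M$ by closure. (Alternatively, since $\phi_g\colon\mu\mapsto g\odot\mu$ is an injection $M\to M$ with $M$ finite, it is a bijection onto $M$, so $g^{-1}$ sends $M$ into $M$.)

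The key step, and really the only one requiring any idea, is transitivity. Given $\mu\in M$, transitivity of $G$ on $N$ yields $g\in G$ with $g\odot e_N=\mu$; I would show that this $g$ automatically lies in $G_M$. For any $\nu\in M$, using Definition \ref{defn_gamma_function} to expand via the $\gamma$-function gives
\[
g\odot\nu \;=\; (g\odot e_N)\star{}^{\gamma(g)}\nu \;=\; \mu \star {}^{\gamma(g)}\nu.
\]
Since $M$ is a left ideal, $\,^{\gamma(g)}\nu\in{}^{\gamma(G)}M=M$, and $\mu\in M$, so the product lies in $M$ because $M$ is a subgroup of $(N,\star)$. Hence $g\in G_M$, and any element of $M$ is reached from $e_N$ by some element of $G_M$.

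Finally, the skew bracoid relation \eqref{eqn_SBO_relation} holds for all $g\in G$ and $\eta,\mu\in N$, so in particular for elements of $G_M$ and $M$; thus $(G_M,M)$ inherits the relation and is a skew bracoid. I do not anticipate any real obstacle beyond the transitivity step, where the $\gamma$-function decomposition is essential — without it, one cannot easily see why an element $g$ chosen merely so that $g\odot e_N\in M$ should preserve all of $M$.
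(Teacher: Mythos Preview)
Your proof is correct and follows essentially the same approach as the paper. The paper packages the transitivity argument by introducing the auxiliary set $G_M' = \{g \in G \mid g \odot e_N \in M\}$ and showing $G_M = G_M'$ via the same $\gamma$-function decomposition you use; you are also more explicit than the paper in verifying that $G_M$ is a subgroup, which the paper takes as clear.
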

\begin{proof}
It is clear that that $ G_{M} $ acts on $ M $ and that the skew bracoid relation is satisfied. We need to show that the action of $ G_{M} $ on $ M $ is transitive. Define
\[ G_{M}' = \{ g \in G \mid g \odot e_{N} \in M \}. \]
Since $ M $ is a left ideal of $ (G,N) $, given $ g \in G $ we have  
\[ \,^{\gamma(g)}\mu = (g \odot e_{N})^{-1}(g \odot \mu) \in M \mbox{ for all } \mu \in M, \]
and so $ g \odot \mu \in M $ for all $ \mu \in M $ if and only if $ g \odot e_{N} \in M $. Therefore $ G_{M} = G_{M}' $, and now it is clear that the action of $ G_{M} $ on $ M $ is transitive. 
\end{proof}

Note that the subgroup $ G_{M} $ associated to the left ideal $ M $ in Proposition \ref{prop_left_ideal_sub_SBO} is, by construction, the largest subgroup of $ G $ that acts on $ M $. 

\begin{example} \label{eg_quotient_ideals_G_M}
When a skew brace $ (B,\star,\cdot) $ is viewed as a skew bracoid, the subgroup $ B_{A} $ of $ (B,\cdot) $ associated to a left ideal $ A $ is precisely $ A $ itself. 

If $ (B, B/A) $ is a skew bracoid arising from a skew brace $ (B,\star,\cdot) $ and a strong left ideal $ A $, as described in Proposition \ref{prop_skew_bracoid_quotient_skew_brace}, then we have seen in Example \ref{eg_quotient_ideals} that the left ideals of $ (B, B/A) $ are the sets $ C/A $ where $ C $ is a left ideal of $ B $ that contains $ A $. The subgroup $ B_{C/A} $ of $ (B,\cdot) $ associated to such a left ideal  is $ C $.  
\end{example}

\begin{example} \label{eg_running_ideals_G_M}
Recall the skew bracoid $ (G,N) $ constructed in Example \ref{eg_running_first_principles}. Let $ e $ be a positive divisor of $ d $, and let $ M = \langle \eta^{e} \rangle $. By Example \ref{eg_running_ideals} this is an ideal of $ (G,N) $. We find that $ G_{M} = \langle r^{e}, s \rangle $. 
\end{example}

If $ (B,\star,\cdot) $ is a skew brace and $ A $ is a subgroup of $ (B,\cdot) $ such that $ \,^{\gamma(B)}A=A $ then $ A $ is necessarily a subgroup of $ (B,\star) $, and therefore a left ideal of $ (B,\star,\cdot) $. We also have a skew bracoid analogue of this result:

\begin{proposition}
Let $ (G,N) $ be a skew bracoid, let $ H $ be a subgroup of $ G $ and let 
\[ M_{H} = \{ h \odot e_{N} \mid h \in H \}. \]
Suppose that $ \,^{\gamma(G)} M_{H} = M_{H} $. Then $ M_{H} $ is a left ideal of $ (G,N) $.  
\end{proposition}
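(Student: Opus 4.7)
My plan is to verify that $M_H$ is a subgroup of $(N,\star)$; the remaining condition $\,^{\gamma(G)} M_H = M_H$ in Definition \ref{defn_left_ideal_and_ideal} is part of the hypothesis. Clearly $e_N = e_H \odot e_N$ lies in $M_H$, so the two things to check are closure under $\star$ and closure under inversion. Both will follow by combining the defining identity of the $\gamma$-function (Definition \ref{defn_gamma_function}) with the hypothesis that each $\gamma(g)$ permutes $M_H$.

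For closure under inversion, I would take $h \in H$ and apply $\gamma(h)$ to $h^{-1} \odot e_N \in M_H$. The identity $\,^{\gamma(h)}\eta = (h \odot e_N)^{-1} \star (h \odot \eta)$ with $\eta = h^{-1} \odot e_N$ telescopes to give $\,^{\gamma(h)}(h^{-1} \odot e_N) = (h \odot e_N)^{-1}$. Because $\gamma(h)$ sends $M_H$ to $M_H$ by assumption, the right-hand side lies in $M_H$.

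For closure under $\star$, the key move is to absorb one factor into the $\gamma$-function. Given $h_1, h_2 \in H$, I would use $\gamma(h_1^{-1})(h_2 \odot e_N) \in M_H$ to write this element as $h_3 \odot e_N$ for some $h_3 \in H$; then $\,^{\gamma(h_1)}(h_3 \odot e_N) = h_2 \odot e_N$. Unpacking the $\gamma$-function identity in the form
\[ (h_1 h_3) \odot e_N = h_1 \odot (h_3 \odot e_N) = (h_1 \odot e_N) \star \,^{\gamma(h_1)}(h_3 \odot e_N) = (h_1 \odot e_N) \star (h_2 \odot e_N), \]
exhibits the product as an element of $M_H$. (Finiteness and closure under $\star$ would already suffice for subgroup, but the inversion argument above is cleaner.)

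The hypothesis $\,^{\gamma(G)}M_H = M_H$ is carrying all the weight: without it there is no reason a product or inverse of elements of the form $h \odot e_N$ should again be of this form. The only modest obstacle is spotting the correct substitution in the $\star$-closure step, namely replacing $h_2 \odot e_N$ by $\,^{\gamma(h_1)}$ of a suitable element of $M_H$, so that the skew bracoid relation bundles the two factors into a single $H$-action on $e_N$.
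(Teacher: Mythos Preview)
Your proof is correct and uses the same key idea as the paper: exploit the formula $\,^{\gamma(h)}\eta = (h \odot e_{N})^{-1}\star(h\odot\eta)$ together with the hypothesis that $\gamma(G)$ preserves $M_{H}$ to produce the required products and inverses inside $M_{H}$.

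The only organisational difference is that the paper handles both closure conditions in a single stroke via the one-step subgroup criterion. Given $\eta = h\odot e_{N}$ and $\mu = k\odot e_{N}$ in $M_{H}$, it observes that $h^{-1}\odot\mu = (h^{-1}k)\odot e_{N}$ already lies in $M_{H}$ (since $h^{-1}k\in H$), and then applies $\gamma(h)$ directly:
\[
\,^{\gamma(h)}(h^{-1}\odot\mu) = (h\odot e_{N})^{-1}\star(h\odot(h^{-1}\odot\mu)) = \eta^{-1}\star\mu \in M_{H}.
\]
This is slightly more economical than your $\star$-closure argument, which first applies $\gamma(h_{1}^{-1})$ to $h_{2}\odot e_{N}$ to locate an auxiliary $h_{3}$ and then reverses with $\gamma(h_{1})$. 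Your inversion step is exactly the paper's computation specialised to $\mu = e_{N}$. So the two arguments differ only in packaging, not in substance.
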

\begin{proof}
It is clear that $ H $ acts transitively on $ M_{H} $ and that the skew bracoid relation is satisfied. We need to show that $ M_{H} $ is a subgroup of $ N $. Let $ \eta, \mu \in M_{H} $ and write $ \eta = h \odot e_{N} $ and $ \mu = k \odot e_{N} $ with $ h,k \in H $. By the definition of $ M_{H} $, the element $ h^{-1} \odot \mu = (h^{-1}k) \odot e_{N} $ lies in $ M_{H} $, and since $ \,^{\gamma(g)} M_{H} = M_{H} $, we have $ \,^{\gamma(h)}(h^{-1} \odot \mu) \in M_{H} $. But we have
\begin{eqnarray*}
\,^{\gamma(h)}(h^{-1} \odot \mu) & = & (h \odot e_{N})^{-1} ( h \odot (h^{-1} \odot \mu)) \\ 
& = & \eta^{-1} \mu.
\end{eqnarray*}
Thus $ \eta^{-1}\mu \in M_{H} $, and so $ M_{H} $ is a subgroup of $ N $. 
\end{proof}

If $ (G,N) $ is a skew bracoid and $ M $ is a subset of $ N $ then the structural properties of $ M $ are unaffected by reduction:

\begin{proposition} \label{prop_left_ideals_and_reduced_forms}
Let $ (G,N) $ be a skew bracoid. A subset $ M \subseteq N $ is a subskew bracoid, (resp. left ideal, ideal) of $ (G,N) $ if and only if it is a subskew bracoid (resp. left ideal, ideal) of $ (\bar{G},N) $.
\end{proposition}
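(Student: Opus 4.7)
The plan is to exploit the fact that $K = \ker(\lambda_{\odot})$ acts trivially on $N$, so the passage from $(G,N)$ to $(\bar{G},N)$ preserves every piece of action-theoretic data on $N$. I would treat the three cases in turn.

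For left ideals, the condition from Definition \ref{defn_left_ideal_and_ideal} is that $M \leq N$ and $\,^{\gamma(G)}M = M$. The remark concluding Section \ref{sec_defn_characterization} records that the $\gamma$-function of $(\bar G,N)$ satisfies $\,^{\bar{\gamma}(gK)}\eta = \,^{\gamma(g)}\eta$ for every $g \in G$ and $\eta \in N$, so $\bar\gamma(\bar G) = \gamma(G)$ as subsets of $\Aut(N)$. Hence the conditions $\,^{\gamma(G)}M = M$ and $\,^{\bar\gamma(\bar G)}M = M$ are literally the same. For ideals, one adds the requirement that $M$ be normal in $N$, which depends only on $N$ and $M$, neither of which changes; so the ideal case follows from the left ideal case.

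For subskew bracoids, I interpret the claim as asserting that $M \leq N$ appears as the second component of some subskew bracoid of $(G,N)$ iff it appears as the second component of some subskew bracoid of $(\bar G, N)$. Given a subskew bracoid $(H,M)$ of $(G,N)$, the subgroup $HK \leq G$ acts on $M$ with exactly the same orbits as $H$ (since $K$ fixes every element of $N$), so $HK$ acts transitively on $M$; the skew bracoid relation, being an identity in $N$ involving only the action $\odot$, is automatically inherited. Passing to the quotient, $HK/K \leq \bar G$ then makes $(HK/K, M)$ a subskew bracoid of $(\bar G, N)$. Conversely, if $(\bar H, M)$ is a subskew bracoid of $(\bar G, N)$, let $H$ be the preimage of $\bar H$ under the quotient $G \twoheadrightarrow \bar G$; then $H$ is a subgroup of $G$ whose action on $N$ factors through $\bar H$, so $H$ acts transitively on $M$ and $(H,M)$ is a subskew bracoid of $(G,N)$.

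The argument is essentially a direct unpacking of definitions, so there is no serious obstacle; the only point requiring any care is confirming that the skew bracoid relation transfers cleanly between $(G,N)$ and $(\bar G,N)$, but this is automatic from the construction of the reduced form in Proposition \ref{prop_reduced_form} together with the triviality of the $K$-action on $N$.
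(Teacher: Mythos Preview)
Your proposal is correct and follows essentially the same approach as the paper's own proof: both replace $H$ by $HK$ (using normality of $K$ in $G$) to pass to a subgroup containing the kernel, then use the identity $\,^{\bar{\gamma}(gK)}\eta = \,^{\gamma(g)}\eta$ for the left ideal case and the action-independence of normality in $N$ for the ideal case. The only difference is the order in which the three cases are treated.
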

\begin{proof}
Recall from Proposition \ref{prop_reduced_form} that $ \bar{G} = G / K $ (with $ K=\ker(\lambda_{\odot}) $) and that the transitive action of $ \bar{G} $ on $ N $ is given by $ (gK) \odot \eta = g \odot \eta $. If $ (H,M) $ is a subskew bracoid of $ (G,N) $ then without loss of generality we may assume that $ \ker(\lambda_{\odot}) \subseteq H $; if not, we may replace $ H $ with the subgroup $ H\ker(\lambda_{\odot}) $, which still acts transitively on $ M $. Then $ \bar{H} $ acts transitively on $ M $, so $ (\bar{H},M) $ is a subskew bracoid of $ (\bar{G},N) $. Conversely, a subskew bracoid of $ (\bar{G},N) $ has the form $ (\bar{H},M) $ for some subgroup $ H $ of $ G $ containing $ K $, and we see that $ (H,M) $ is a subskew bracoid of $ (G,N) $. 

The $ \gamma $-function of $ (\bar{G},N) $ is given by $ \,^{\bar{\gamma}(gK)}\eta = \,^{\gamma(g)}\eta $; hence we have $ \,^{\gamma(G)}M = M $ if and only if $ \,^{\bar{\gamma}(\bar{G})} M = M $, and so $ M $ is a left ideal of $ (G,N) $ if and only if it is a left ideal of $ (\bar{G},M) $. 

Finally, a left ideal $ M $ of $ (G,N) $ is an ideal if and only if it is a normal subgroup of $ N $; since this condition is unrelated to the action of $ G $ or $ \bar{G} $ this occurs if and only if $ M $ is an ideal of $ (\bar{G},N) $.
\end{proof}

Next we prove that the quotient of a skew bracoid by an ideal is a skew bracoid.  

\begin{proposition} \label{prop_quotient_skew_bracoid}
Let $ (G,N) $ be a skew bracoid and let $ M $ be an ideal of $ (G,N) $. Then $ (G,\cdot) $ acts on the quotient group $ N/M $ via $ g \odot (\eta M) = (g \odot \eta)M $, and $ (G,N/M) $ is a skew bracoid.
\end{proposition}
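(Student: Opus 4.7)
The plan is to check three things in order: that the proposed formula $g \odot (\eta M) = (g \odot \eta) M$ is well defined on cosets, that it really defines a transitive action of $G$ on $N/M$, and finally that the skew bracoid relation transfers from $N$ to $N/M$. The normality of $M$ in $N$ (part of being an ideal) is needed just to make $N/M$ a group in the first place; the left ideal condition $\,^{\gamma(G)}M = M$ will be used for well-definedness.

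The main step to justify, and really the only nontrivial one, is well-definedness. Suppose $\eta M = \mu M$, so $\eta^{-1} \star \mu \in M$. Writing $\mu = \eta \star (\eta^{-1} \star \mu)$ and applying the skew bracoid relation \eqref{eqn_SBO_relation}, I would compute
\[ (g \odot \eta)^{-1} \star (g \odot \mu) = (g \odot e_{N})^{-1} \star (g \odot (\eta^{-1} \star \mu)) = \,^{\gamma(g)}(\eta^{-1} \star \mu). \]
Since $\eta^{-1} \star \mu \in M$ and $M$ is a left ideal, the right-hand side lies in $M$, so $(g \odot \eta) M = (g \odot \mu) M$, as required. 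This is the step where the left ideal condition does its work, and it is the one place where a careful manipulation is needed.

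Once well-definedness is established, the verification that $g \odot (\eta M) = (g \odot \eta)M$ defines a transitive action of $G$ on $N/M$ is immediate from the corresponding properties of the original action on $N$: $e_{G} \odot (\eta M) = \eta M$ and $g \odot (h \odot (\eta M)) = (gh) \odot (\eta M)$ hold because they hold before passing to the quotient, and transitivity follows from transitivity on $N$ since every coset $\eta M$ equals $(g \odot e_N)M$ for some $g \in G$.

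Finally, for the skew bracoid relation on $(G, N/M)$, I would apply \eqref{eqn_SBO_relation} in $N$ to $\eta \star \mu$ and then project to $N/M$:
\begin{eqnarray*}
g \odot (\eta M \star \mu M) & = & (g \odot (\eta \star \mu))M \\
& = & \bigl((g \odot \eta) \star (g \odot e_{N})^{-1} \star (g \odot \mu)\bigr) M \\
& = & (g \odot \eta)M \star ((g \odot e_{N})M)^{-1} \star (g \odot \mu)M \\
& = & (g \odot \eta M) \star (g \odot e_{N}M)^{-1} \star (g \odot \mu M),
\end{eqnarray*}
which is precisely \eqref{eqn_SBO_relation} for $(G, N/M)$. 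Noting that $e_{N}M$ is the identity of $N/M$ completes the argument, and I expect no hidden obstacle beyond the well-definedness check handled in the first step.
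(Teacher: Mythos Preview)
Your proof is correct and follows the same overall strategy as the paper: establish well-definedness of the induced action via the left ideal condition, then observe that transitivity and the skew bracoid relation descend immediately from $N$ to $N/M$. Your well-definedness computation is in fact slightly more direct than the paper's: by writing $\mu = \eta \star (\eta^{-1} \star \mu)$ and applying \eqref{eqn_SBO_relation} once, you obtain $(g \odot \eta)^{-1} \star (g \odot \mu) = \,^{\gamma(g)}(\eta^{-1} \star \mu)$ in a single step, whereas the paper applies \eqref{eqn_SBO_relation} to the product $\eta^{-1} \star \kappa$ (splitting it as $\eta^{-1}$ and $\kappa$) and must then invoke Proposition~\ref{prop_useful_identites} to simplify the resulting expression involving $g \odot \eta^{-1}$. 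Both routes reach the same conclusion; yours simply avoids the auxiliary identity.
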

\begin{proof}
First we show that formula for the action of $ G $ on $ N/M $ is well defined. Let $ g \in G $ and $ \eta, \kappa \in N $, and suppose that $ \eta M = \kappa M $. Then $ \eta^{-1}\kappa \in M $, and since $ M $ is an ideal of $ (G,N) $ we have $ \,^{\gamma(g)}(\eta^{-1}\kappa) \in M $. That is:
\[ (g \odot e_{N})^{-1} (g \odot (\eta^{-1}\kappa)) \in M. \]
Applying the skew bracoid relation \eqref{eqn_SBO_relation} we have
\begin{equation} \label{eqn_quotient_SBO_1}
(g \odot e_{N})^{-1} (g \odot \eta^{-1}) (g \odot e_{N})^{-1} (g \odot \kappa) \in M.
\end{equation}
Now applying Proposition \ref{prop_useful_identites}  to the first three terms of \eqref{eqn_quotient_SBO_1} we obtain
\[  (g \odot \eta)^{-1} (g \odot \kappa) \in M, \]
and so
\[ (g \odot \eta)M = (g \odot \kappa)M. \]
Therefore the formula for the action of $ G $ on $ N/M $ is well defined. Since $ (G,N) $ is a skew bracoid, it follows quickly that the action of $ G $ on $ N/M $ is transitive and that the skew bracoid relation is satisfied. 
\end{proof}

\begin{example}
When a skew brace is viewed as a skew bracoid, the ideals of the skew bracoid are the strong left ideals of the skew brace (see Example \ref{eg_quotient_ideals}). Using such an ideal to form a quotient skew bracoid is precisely the process described in Proposition \ref{prop_skew_bracoid_quotient_skew_brace}. 

If $ (B, B/A) $ is a skew bracoid arising from a skew brace $ (B,\star,\cdot) $ and a strong left ideal $ A $, as described in Proposition \ref{prop_skew_bracoid_quotient_skew_brace}, then we have seen in Example \ref{eg_quotient_ideals} that the ideals of $ (B, B/A) $ are the sets $ C/A $ where $ C $ is a strong left ideal of $ B $ that contains $ A $. The corresponding quotient skew bracoid is $ (B, (B/A)/(C/A)) $. 
\end{example}

It is easily shown (similarly to Example \ref{eg_quotient_ideals}) that the $ \gamma $-function of a quotient skew bracoid $ (G,N/M) $ is given by the composition of the $ \gamma $ function of $ (G,N) $ with the natural projection $ N \twoheadrightarrow N/M $, so that $ \,^{\gamma(g)}(\eta M) = (^{\gamma(g)}\eta) M $ for all $ \eta \in N $ and $ g \in G $. We have $ \Stab_{G}(e_{N}M) = G_{M} $ (see Proposition \ref{prop_left_ideal_sub_SBO}); it follows quickly from this that we have $ \Stab_{G}( g \odot e_{N}M) = g G_{M} g^{-1} $, and that $ \ker(\lambda_{\odot}) = \bigcap_{g \in G} gG_{M}g^{-1} $, the largest normal subgroup of $ G $ contained in $ G_{M} $. We note that a quotient of a reduced skew bracoid need not be reduced:

\begin{example} \label{eg_running_quotient_not_reduced}
Consider the skew bracoid $ (G,N) $ constructed in Example \ref{eg_running_first_principles} in the particular case in which $ n=d=4 $. Recall from Example \ref{eg_running_kernel_of_lambda} that $ (G,N) $ is reduced, from Example \ref{eg_running_ideals} that $ M = \langle \eta^{2} \rangle $ is an ideal of $ (G,N) $, and from Example \ref{eg_quotient_ideals_G_M} that $ G_{M} = \langle r^{2}, s \rangle $. Hence $ \Stab_{G}(e_{N}M) = G_{M} $ is a normal subgroup of $ G $, and so the kernel of the action of $ G $ on $ N/M $ is also equal to $ G_{M} $. Therefore $ (G, N/M) $ is not reduced.
\end{example}

We note that in Example \ref{eg_running_quotient_not_reduced} we have $ |G|=8, |N|=4, |M|=2, $ and $ |G_{M}|=4 $. Since $ G_{M} $ coincides with the kernel of the action of $ G $ on $ N/M $, the reduced form $ (\bar{G},N/M) $ of this skew bracoid (see Proposition \ref{prop_reduced_form}) satisfies $ |\bar{G}| = |N/M| = 2 $, and so is essentially a skew brace. To study this phenomenon further we make a definition:

\begin{definition} \label{defn_enhanced_left_ideal}
An \textit{enhanced left ideal} (resp. \textit{enhanced ideal}) of a skew bracoid $ (G,N) $ is a left ideal (resp. ideal) $ M $ such that $ G_{M} $ is normal in $ G $. 
\end{definition}

In the case of skew braces, an enhanced ideal of a skew brace $ (B,\star,\cdot) $ is a left ideal $ A $ that is a normal subgroup of $ (B,\cdot) $; we are not aware of a term for this construction elsewhere in the literature. 

As in Proposition \ref{prop_left_ideals_and_reduced_forms}, a subset $ M $ of $ N $ is an enhanced left ideal, or enhanced ideal, of a skew bracoid $ (G,N) $ if and only if it is such a substructure of $ (\bar{G},N) $. Enhanced ideals play an important role in the connection between skew bracoids and Hopf-Galois structures described in Section \ref{sec_HGS}. 

\begin{proposition}
Let $ (G,N) $ be a skew bracoid and let $ M $ be an ideal of $ (G,N) $. Then $ M $ is an enhanced ideal of $ (G,N) $ if and only if the reduced form of the skew bracoid $ (G,N/M) $ is essentially a skew brace. 
\end{proposition}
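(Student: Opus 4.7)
The plan is to unravel the definitions and then reduce everything to a count of orders via the orbit-stabilizer theorem. Let $\odot'$ denote the action of $G$ on $N/M$ from Proposition \ref{prop_quotient_skew_bracoid}, and let $K = \ker(\lambda_{\odot'})$, so that the reduced form of $(G,N/M)$ is $(\bar G, N/M)$ with $\bar G = G/K$. By Example \ref{eg_essentially_skew_brace}, the reduced form is essentially a skew brace precisely when $|\bar G| = |N/M|$, i.e.\ when $[G:K] = |N/M|$.

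First I would record the two facts stated immediately before Definition \ref{defn_enhanced_left_ideal}: $\Stab_G(e_N M) = G_M$, and $K$ equals $\bigcap_{g \in G} gG_M g^{-1}$, the normal core of $G_M$ in $G$. The first of these, combined with the orbit-stabilizer theorem applied to the transitive action of $G$ on $N/M$, gives $[G:G_M] = |N/M|$. Substituting into the index equation above, the reduced form is essentially a skew brace if and only if $[G:K] = [G:G_M]$, i.e.\ $|K| = |G_M|$.

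Now the endgame is immediate: since $K \subseteq G_M$ by construction, $|K| = |G_M|$ forces $K = G_M$, which means $G_M$ is its own normal core, hence normal in $G$; conversely, if $G_M \trianglelefteq G$ then $G_M$ is itself the largest normal subgroup of $G$ contained in $G_M$, so $K = G_M$. By Definition \ref{defn_enhanced_left_ideal}, normality of $G_M$ in $G$ is exactly the condition that the ideal $M$ be enhanced, completing the biconditional.

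The argument is essentially bookkeeping, so there is no serious obstacle; the main thing to be careful about is that the $\lambda_{\odot}$ appearing in the definition of the reduced form refers to the action on $N/M$ (not on $N$), and that the normal-core description of its kernel, although quoted in the paragraph preceding Definition \ref{defn_enhanced_left_ideal}, should be invoked explicitly so the reader can locate it.
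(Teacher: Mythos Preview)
Your proof is correct and follows essentially the same approach as the paper. The only organizational difference is that the paper treats the two directions separately and, in the forward direction, computes $|G_{M}|=|S||M|$ via the original stabilizer $S=\Stab_{G}(e_{N})$ rather than applying orbit--stabilizer directly to the action on $N/M$; your route via $[G:G_{M}]=|N/M|$ and the normal-core description of $K$ is slightly more economical but amounts to the same argument.
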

\begin{proof}
First suppose that $ M $ is an enhanced ideal of $ (G,N) $, so that $ G_{M} $ is normal in $ G $. As noted above, in the skew bracoid $ (G, N/M) $ we have $ \Stab_{G}(g \odot M) = gG_{M}g^{-1} $ for each $ g \in G $. Since $ G_{M} $ is normal in $ G $ these stabilizers all coincide, so $ \ker(\lambda_{\odot}) = G_{M} $, and so the reduced form of $ (G, N/M) $ is $ ( G/G_{M}, N/M ) $. Now let $ S = \Stab_{G}(e_{N}) $, so that $ |G| = |S||N| $. By the definition of $ G_{M} $ (see Proposition \ref{prop_left_ideal_sub_SBO}) we have $ S \subseteq G_{M} $, and so $ |G_{M}|=|S||M| $.  Therefore we have
\[ \left| \frac{G}{G_{M}} \right| = \frac{ |G| }{ |G_{M}| } = \frac{ |S||N| }{ |S||M| } = \frac{ |N| }{ |M| } = \left| \frac{ N }{M} \right|, \]
and so $ ( G/G_{M}, N/M ) $ is essentially a skew brace. 

Conversely, suppose that the reduced form of $ (G, N/M) $ is essentially a skew brace. Then, writing $ K $ for the kernel of the action of $ G $ on $ N/M $, we have $ |G| / |K| = |N| / |M| $, and so $ |K| = |G_{M}| $. But $ K \subseteq S \subseteq G_{M} $, so in fact $ K = G_{M} $. Therefore $ G_{M} $ is normal in $ G $, and so $ M $ is an enhanced ideal of $ (G,N) $. 
\end{proof}

\begin{proposition}
Let $ (G,N) $ be a skew bracoid, and let $ M $ be an ideal of $ (G,N) $. There is a bijective correspondence between left ideals, ideals, enhanced left ideals, enhanced ideals of $ (G,N/M) $ and the corresponding substructures of $ (G,N) $ that contain $ M $.
\end{proposition}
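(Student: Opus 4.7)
The plan is to reduce the proposition to the standard lattice/correspondence theorem for groups, then check that each of the four substructure conditions transfers between $(G,N)$ and $(G,N/M)$ in the appropriate way.

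First, I would invoke the usual group-theoretic correspondence: subgroups $L/M$ of $N/M$ correspond bijectively with subgroups $L$ of $N$ containing $M$, and under this correspondence $L/M$ is normal in $N/M$ if and only if $L$ is normal in $N$. This already settles the ideal-versus-subgroup distinction once the left ideal condition is handled.

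Next I would use the identity recorded after Proposition \ref{prop_quotient_skew_bracoid}, namely $\,^{\gamma(g)}(\eta M) = (\,^{\gamma(g)}\eta) M$ for all $g \in G$ and $\eta \in N$. From this, a subgroup $L$ of $N$ containing $M$ satisfies $\,^{\gamma(G)}L = L$ if and only if the subgroup $L/M$ of $N/M$ satisfies $\,^{\gamma(G)}(L/M) = L/M$; the ``only if'' is immediate, and the ``if'' follows because any $\eta \in L$ has its image $\eta M$ sent into $L/M$, whence $\,^{\gamma(g)}\eta$ lies in some coset contained in $L$, i.e. in $L$ itself (using $M \subseteq L$). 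This gives the correspondence for left ideals, and combined with the previous paragraph, for ideals.

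For the enhanced versions, the key observation is that if $L$ is a left ideal of $(G,N)$ containing $M$, then $G_L = G_{L/M}$, where $G_L$ and $G_{L/M}$ are the subgroups of $G$ associated to these left ideals via Proposition \ref{prop_left_ideal_sub_SBO}. Indeed, from the characterization $G_L = \{g \in G \mid g \odot e_N \in L\}$ proved there, together with the analogous characterization $G_{L/M} = \{g \in G \mid (g \odot e_N)M \subseteq L\}$, the equality follows at once because $M \subseteq L$ means $(g \odot e_N)M \subseteq L$ if and only if $g \odot e_N \in L$. Consequently $G_L$ is normal in $G$ if and only if $G_{L/M}$ is, so the correspondence restricts to enhanced left ideals and, combining with the normality transfer above, to enhanced ideals.

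I expect no substantive obstacle; the only care needed is in the ``if'' direction of the $\gamma(G)$-invariance transfer, where the hypothesis $M \subseteq L$ is used to conclude that an entire coset contained in $L/M$ consists of elements of $L$. Everything else is bookkeeping on top of the standard correspondence theorem and the explicit description of $G_L$.
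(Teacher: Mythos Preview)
Your proposal is correct and follows essentially the same approach as the paper: the subgroup correspondence for $N$ and $N/M$, the identity $\,^{\gamma(g)}(\eta M)=(\,^{\gamma(g)}\eta)M$ to transfer $\gamma(G)$-invariance, and the equality $G_{L}=G_{L/M}$ (which the paper deduces directly from $g\odot(\eta M)=(g\odot\eta)M$, while you use the equivalent characterization $G_{L}=\{g\in G\mid g\odot e_{N}\in L\}$ from Proposition~\ref{prop_left_ideal_sub_SBO}). The arguments are interchangeable and the level of detail you give is, if anything, slightly more explicit than the paper's.
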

\begin{proof}
There is a bijective correspondence between subgroups of $ N/M $ and subgroups of $ N $ that contain $ M $. Let $ P $ be a subgroup of $ N $ that contains $ M $, with corresponding subgroup $ P/M $ of $ N/M $. 

Recall that the $ \gamma $-function of the skew bracoid $ (G,N/M) $ is given by $ \,^{\gamma(g)}(\eta M) = (^{\gamma(g)}\eta) M $ for all $ \eta \in N $ and $ g \in G $. Therefore we have $ \,^{\gamma(g)}(\pi M) \in P/M $ for all $ \pi \in P $ if and only if $ (\,^{\gamma(g)}\pi) M \in P/M $ for all $ \pi \in P $ if and only if $ \,^{\gamma(g)}\pi \in P $ for all $ \pi \in P $, and so $ P $ is a left ideal of $ (G,N) $ if and only if $ P/M $ is a left ideal of $ (G, P/M) $. 

The correspondence between subgroups of $ N/M $ and subgroups of $ N $ that contain $ M $ respects and detects normality, so a left ideal of $ (G,N) $ that contains $ M $ is an ideal if and only if the corresponding left ideal of $ (G,N/M) $ is an ideal. 

Finally, since the action of $ G $ on $ N/M $ is given by $ g \odot (\eta M) = (g \odot \eta)M $, we see that if $ P $ is a left ideal of $ (G,N) $ then we have $ G_{P/M} = G_{P} $, and so $ P $ is enhanced if and only if $ P/M $ is enhanced.  
\end{proof}

\section{Homomorphisms and Isomorphisms} \label{sec_homomorphisms}

A homomorphism of skew braces is simply a map between the underlying sets that respects both of the group operations. The kernel of a skew brace homomorphism is an ideal of the domain, the image is a subskew brace of the codomain, and a version of the first isomorphism theorem holds. In this section we generalize these results to skew bracoids. 

\begin{definition} \label{defn_homomorphism}
A \textit{homomorphism} of skew bracoids $ (G,N,\odot) \rightarrow (G', N', \odot') $ is a pair of group homomorphisms $ \varphi : G \rightarrow G' $ and $ \psi : N \rightarrow N' $ such that 
\begin{equation} \label{eqn_homomorphism_relation}
 \psi(g \odot \eta) = \varphi(g) \odot' \psi(\eta)
\end{equation}
for all $ g \in G $ and $ \eta \in N $. 
\end{definition}

The presence of two different group homomorphisms in Definition \ref{defn_homomorphism} makes it appear rather unwieldy. In fact, skew bracoid homomorphisms are more restricted then they first appear:

\begin{proposition} \label{prop_homomorphism}
Let $ (G,N,\odot) $ and $ (G', N', \odot') $ be skew bracoids, let $ S = \Stab_{G}(e_{N}) $, let $ S' = \Stab_{G'}(e_{N'}) $, and let $ \varphi : G \rightarrow G' $ be a group homomorphism.

If $ \varphi(S) \subseteq S' $ and the map $ \varphi_{N} : N \rightarrow N' $ defined by  
\begin{equation} \label{eqn_homomorphism_relation_2} 
\varphi_{N}(g \odot e_{N}) = \varphi(g) \odot' e_{N'} 
\end{equation}
is a group homomorphism then $ \varphi, \varphi_{N} $ form a homomorphism of skew bracoids. 

Conversely, if $ \psi : N \rightarrow N' $ is a group homomorphism such that $ \varphi, \psi $ form a homomorphism of skew bracoids then $ \varphi(S) \subseteq S' $ and $ \psi = \varphi_{N} $. 
\end{proposition}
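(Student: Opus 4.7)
The plan is to prove the two implications separately, with the forward direction requiring a well-definedness check and the converse being essentially an evaluation at the identity element.

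For the forward direction, my first task is to show that $\varphi_N$ is actually well-defined by the formula \eqref{eqn_homomorphism_relation_2}. Because the action $\odot$ is transitive, every element of $N$ has the form $g \odot e_N$ for some $g \in G$, so the formula specifies $\varphi_N$ on all of $N$; the issue is ambiguity. If $g \odot e_N = h \odot e_N$ then $g^{-1}h \in S$, so the hypothesis $\varphi(S) \subseteq S'$ gives $\varphi(g)^{-1}\varphi(h) \in S'$, and hence $\varphi(g) \odot' e_{N'} = \varphi(h) \odot' e_{N'}$. Granted the hypothesis that $\varphi_N$ is a group homomorphism, I then verify the homomorphism relation \eqref{eqn_homomorphism_relation} by writing an arbitrary $\eta \in N$ as $\eta = h \odot e_N$ and computing
\[
\varphi_N(g \odot \eta) = \varphi_N((gh) \odot e_N) = \varphi(gh) \odot' e_{N'} = \varphi(g) \odot' (\varphi(h) \odot' e_{N'}) = \varphi(g) \odot' \varphi_N(\eta),
\]
using the definition of $\varphi_N$ and the fact that $\odot'$ is a group action.

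For the converse, I begin by evaluating the given homomorphism relation \eqref{eqn_homomorphism_relation} at $\eta = e_N$. Since $\psi$ is a group homomorphism, $\psi(e_N) = e_{N'}$, giving
\[
\psi(g \odot e_N) = \varphi(g) \odot' \psi(e_N) = \varphi(g) \odot' e_{N'}
\]
for all $g \in G$. The right-hand side is the defining expression for $\varphi_N(g \odot e_N)$, so $\psi$ and $\varphi_N$ agree on every element of $N$ of the form $g \odot e_N$; by transitivity this accounts for all of $N$, and hence $\psi = \varphi_N$. Specialising further to $s \in S$, the displayed equation yields $\varphi(s) \odot' e_{N'} = \psi(e_N) = e_{N'}$, so $\varphi(s) \in S'$ and therefore $\varphi(S) \subseteq S'$.

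I do not anticipate a serious obstacle: the forward direction is essentially a well-definedness verification followed by a short computation using transitivity, and the converse amounts to plugging in $\eta = e_N$. The only subtle point is that in the forward direction the assumption that $\varphi_N$ is a group homomorphism is built into the hypothesis, so one does not need to deduce multiplicativity from $\varphi$ alone; without this assumption the proof would have to engage with the skew bracoid relation \eqref{eqn_SBO_relation} in a more substantive way.
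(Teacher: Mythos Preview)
Your proof is correct and follows essentially the same approach as the paper's. The only minor difference is in the converse: the paper first establishes $\varphi(S)\subseteq S'$ (so that $\varphi_N$ is officially well-defined) and then proves $\psi=\varphi_N$, whereas you reverse this order; your ordering is harmless since the identity $\psi(g\odot e_N)=\varphi(g)\odot' e_{N'}$ already shows the right-hand side depends only on $g\odot e_N$, but it is slightly cleaner to note $\varphi(S)\subseteq S'$ first before referring to $\varphi_N$ as a defined object.
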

\begin{proof}
First suppose that $ \varphi(S) \subseteq S' $ and that $ \varphi_{N} : N \rightarrow N' $ is a group homomorphism. (Note that the assumption that $ \varphi(S) \subseteq S' $ and the fact that $ G $ acts transitively on $ N $ ensure that $ \varphi_{N} $ is well defined.) Since $ G $ acts transitively on $ N $, given $ \eta \in N $ we may write $ \eta = h \odot e_{N} $ for some $ h \in G $, and then we have
\begin{eqnarray*}
\varphi_{N}( g \odot \eta) & = & \varphi_{N}( g \odot (h \odot e_{N}) ) \\
& = & \varphi_{N}( gh \odot e_{N} ) \\
&= & \varphi(gh) \odot' e_{N'} \mbox{ by \eqref{eqn_homomorphism_relation_2} }\\
& = & \varphi(g) \odot' (\varphi(h) \odot' e_{N'}) \mbox{ since $ \varphi $ is a homomorphism }\\
& = & \varphi(g) \odot' \varphi_{N}(h \odot e_{N}) \mbox{ by \eqref{eqn_homomorphism_relation_2} }\\
& = & \varphi(g) \odot' \varphi_{N}(\eta).
\end{eqnarray*}
Thus \eqref{eqn_homomorphism_relation} is satisfied, and so $ \varphi, \varphi_{N} $ form a homomorphism of skew bracoids. 
Conversely, suppose that $ \psi : N \rightarrow N' $ is a group homomorphism such that $ \varphi, \psi $ form a homomorphism of skew bracoids. If $ g \in S $ then we have
\begin{eqnarray*}
\varphi(g) \odot' e_{N'} & = & \varphi(g) \odot' \psi(e_{N}) \\
& = & \psi( g \odot e_{N} ) \\
& = & \psi( e_{N} ) \\
& = & e_{N'}.
\end{eqnarray*} 
Thus $ \varphi(S) \subseteq S' $, and so $ \varphi_{N} $ is well defined. Since $ G $ acts transitively on $ N $, given $ \eta \in N $ we may write $ \eta = h \odot e_{N} $ for some $ h \in G $, and then we have 
\begin{eqnarray*}
\psi(\eta) & = & \psi(h \odot e_{N}) \\
& = & \varphi(h) \odot' e_{N'} \\
& = & \varphi_{N}(h \odot e_{N}) \\
& = & \varphi_{N}(\eta). 
\end{eqnarray*}
Thus $ \psi = \varphi_{N} $, which completes the proof. 
\end{proof}

In light of Proposition \ref{prop_homomorphism} we will adopt the following notation for homomorphisms of skew bracoids

\begin{notation}
We will write $ \boldvarphi : (G,N,\odot) \rightarrow (G', N', \odot') $ to denote the homomorphism of skew bracoids comprised of the group homomorphisms $ \varphi : G \rightarrow G' $ and $ \varphi_{N} : N \rightarrow N' $. 
\end{notation}

\begin{example}
If we view skew braces as skew bracoids then a homomorphism of skew braces is a homomorphism of skew bracoids in which the two maps coincide. 

If $ (B, B/A) $ is a skew bracoid arising from a skew brace $ (B,\star,\cdot) $ and a strong left ideal $ A $, as described in Proposition \ref{prop_skew_bracoid_quotient_skew_brace}, then we obtain a natural homomorphism of skew bracoids $ (B,B) \rightarrow (B, B/A) $ as follows: let $ \varphi : B \rightarrow B $ be the identity map. In the notation of Proposition \ref{prop_homomorphism} we have $ S = \{ e_{B} \} $ and $ S' = A $, so $ \varphi(S) \subseteq S' $. The map $ \varphi_{B} : B \rightarrow B/A $ is given by 
\[ \varphi_{B}(b) = b \odot A = b \star A, \]
which is the natural projection $ (B, \star) \twoheadrightarrow (B/A, \star) $. Therefore $ \varphi_{B} $ is a homomorphism, and so $ \varphi, \varphi_{B} $ form a homomorphism of skew bracoids. 

More generally, if $ (G,N) $ is a skew bracoid and $ M $ is an ideal of $ (G,N) $ then choosing $ \varphi : G \rightarrow G $ to be the identity map we find that $ \varphi_{N} : N \twoheadrightarrow N/M $ is the natural projection and $ \varphi, \varphi_{N} $ form a homomorphism of skew bracoids. 
\end{example}

Next we show that kernels and images of skew bracoid homomorphisms have the properties we would naturally expect. 

\begin{proposition} \label{prop_homomorphism_kernel_image}
Let $ \boldvarphi : (G,N,\odot) \rightarrow (G', N', \odot') $ be a homomorphism of skew bracoids. Then 
\begin{enumerate}
\item $ \ker(\varphi_{N}) $ is an ideal of $ (G,N) $;
\item $ (\Im(\varphi), \Im(\varphi_{N})) $ is a subskew bracoid of $ (G', N') $.
\end{enumerate}
\end{proposition}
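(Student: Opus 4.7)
The plan is to verify each part by chasing definitions, using the homomorphism relation \eqref{eqn_homomorphism_relation} together with the formula for the $\gamma$-function from Definition~\ref{defn_gamma_function}. For (1), I would first observe that $\ker(\varphi_{N})$ is automatically a normal subgroup of $N$ because $\varphi_{N}$ is a group homomorphism, so the only nontrivial task is verifying the left-ideal condition $\,^{\gamma(g)}\ker(\varphi_{N}) \subseteq \ker(\varphi_{N})$. For $g \in G$ and $\eta \in \ker(\varphi_{N})$, I would apply $\varphi_{N}$ to the identity $\,^{\gamma(g)}\eta = (g \odot e_{N})^{-1} \star (g \odot \eta)$, use the homomorphism property of $\varphi_{N}$ with respect to $\star$, then apply \eqref{eqn_homomorphism_relation} to each term to obtain
\[ \varphi_{N}(\,^{\gamma(g)}\eta) = (\varphi(g) \odot' e_{N'})^{-1} \star' (\varphi(g) \odot' \varphi_{N}(\eta)) = (\varphi(g) \odot' e_{N'})^{-1} \star' (\varphi(g) \odot' e_{N'}) = e_{N'}, \]
which yields the claim.

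For (2), I would first note that $\Im(\varphi)$ is a subgroup of $G'$ and $\Im(\varphi_{N})$ is a subgroup of $N'$ since $\varphi$ and $\varphi_{N}$ are group homomorphisms. The key verification is that $\odot'$ restricts to a transitive action of $\Im(\varphi)$ on $\Im(\varphi_{N})$. Closure follows from \eqref{eqn_homomorphism_relation}: for $g \in G$ and $\eta \in N$, we have $\varphi(g) \odot' \varphi_{N}(\eta) = \varphi_{N}(g \odot \eta) \in \Im(\varphi_{N})$. For transitivity, given $\varphi_{N}(\eta), \varphi_{N}(\mu) \in \Im(\varphi_{N})$, the transitivity of $\odot$ supplies $g \in G$ with $g \odot \eta = \mu$, and then $\varphi(g) \odot' \varphi_{N}(\eta) = \varphi_{N}(\mu)$. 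Finally, the skew bracoid relation \eqref{eqn_SBO_relation} for $(\Im(\varphi), \Im(\varphi_{N}))$ is inherited directly from its validity in $(G', N')$, so no further calculation is required.

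I don't expect any real obstacle here; both parts reduce to one-line computations using \eqref{eqn_homomorphism_relation}. The only mild care needed is remembering that Definition~\ref{defn_homomorphism} requires $\varphi$ and $\varphi_{N}$ each to be a group homomorphism in its own right (so that kernel and image behave as subgroups), and that Proposition~\ref{prop_homomorphism} ensures the action-compatibility condition is strong enough to transport $\gamma$-invariance across $\boldvarphi$.
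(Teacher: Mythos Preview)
Your proposal is correct and follows essentially the same approach as the paper: both parts are handled by direct computation using the homomorphism relation and the definition of the $\gamma$-function, with the skew bracoid relation for the image inherited from $(G',N')$. The only cosmetic difference is in the transitivity step for (ii): the paper shows every $\varphi_{N}(\eta)$ is reached from $e_{N'}$, whereas you show every $\varphi_{N}(\mu)$ is reached from an arbitrary $\varphi_{N}(\eta)$, which is the same argument.
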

\begin{proof}
\begin{enumerate}
\item It is clear that $ \ker(\varphi_{N}) $ is a normal subgroup of $ N $. If $ \eta \in \ker(\varphi_{N}) $ and $ g \in G $ then we have
\begin{eqnarray*}
\varphi_{N}( \,^{\gamma(g)} \eta ) & = & \varphi_{N}( (g \odot e_{N})^{-1} \star (g \odot \eta) ) \\
& = & \varphi_{N}( g \odot e_{N})^{-1} \star' \varphi_{N}(g \odot \eta) \\
& = &  (\varphi(g) \odot' \varphi_{N}(e_{N}) )^{-1} \star' (\varphi(g) \odot \varphi_{N}(\eta)) \\
& = &  (\varphi(g) \odot' e_{N'})^{-1} \star' (\varphi(g) \odot' e_{N'}) \\
& = & e_{N'}.
\end{eqnarray*}
Therefore $ \ker(\varphi_{N}) $ is an ideal of $ (G,N) $. 
\item It is clear that $ \Im(\varphi) $ is a subgroup of $ G' $ and that $ \Im(\varphi_{N}) $ is a subgroup of $ N' $. If $ g \in G $ and $ \eta \in N $ then we have
\[ \varphi(g) \odot' \varphi_{N}(\eta) = \varphi_{N}( g \odot \eta ) \in \Im(\varphi_{N}), \]
so $ \Im(\varphi) $ acts on $ \Im(\varphi_{N}) $. To show that this action is transitive, let $ \varphi_{N}(\eta) \in \Im(\varphi_{N}) $, with $ \eta \in N $. Since $ G $ acts transitively on $ N $ there exists $ g \in G $ such that $ g \odot e_{N} = \eta $, and we have
\begin{eqnarray*}
\varphi_{N}(\eta) & = & \varphi_{N}(g \odot e_{N}) \\
& = & \varphi(g) \odot' \varphi_{N}(e_{N}) \\
& = & \varphi(g) \odot' e_{N'}. 
\end{eqnarray*} 
Therefore $ \Im(\varphi) $ acts transitively on $ \Im(\varphi_{N}) $. The skew bracoid relation is satisfied because it holds in $ (G',N') $. Therefore $ (\Im(\varphi), \Im(\varphi_{N})) $ is a subskew bracoid of $ (G', N') $.
\end{enumerate}
\end{proof}

\begin{example}
Let $ (G,N) $ be a skew bracoid, let $ M $ be an ideal of $ (G,N) $, and consider the skew bracoid $ (G,N/M) $. Let $ \varphi : G \rightarrow G $ be the identity map, so that $ \varphi_{N} : N \twoheadrightarrow N/M $ is the natural projection. Then $ \ker(\varphi_{N}) = M $ and $ (\Im(\varphi), \Im(\varphi_{N})) = (G,N/M) $. 
\end{example}

\begin{definition}
A homomorphism of skew bracoids $ \boldvarphi : (G,N,\odot) \rightarrow (G',N',\odot') $ is called a \textit{isomorphism} if $ \varphi $ and $ \varphi_{N} $ are isomorphisms. 
\end{definition}

\begin{example} \label{eg_running_isomorphic_quotient}
Consider the skew bracoid $ (G,N) $ constructed in Example \ref{eg_running_first_principles}. We can formalize our observation from Example \ref{eg_running_quotient_of_skew_brace} that this skew bracoid can be obtained as the quotient of a skew brace by a strong left ideal. 

Let $ (G,\star,\cdot) $ be the skew brace described in Example \ref{eg_running_quotient_of_skew_brace}, so that $ (G,\cdot) \cong D_{n} $ and $ (G,\star) \cong C_{n} \times C_{2} $, and consider the strong left ideal $ H = \langle r^{d}, s \rangle $ and the skew bracoid $ (G, G/H) $. We construct an isomorphism of skew bracoids $ \boldvarphi : (G,N) \rightarrow (G,G/H) $ as follows.

Let $ \varphi : G \rightarrow G $ be the identity map. Note that $ \Stab_{G}(e_{N}) = \Stab_{G}(e_{G}H) = \langle r^{d}, s \rangle $ so, in the notation of Proposition \ref{prop_homomorphism} we have  $ \varphi(S) \subseteq S' $. Therefore $ \varphi $ induces a map $ \varphi_{N} : N \rightarrow G/H $, which is given by 
\[ \varphi_{N}(\eta^{k}) = \varphi_{N}(r^{k} \odot e_{N}) = \varphi(r^{k}) \odot e_{G}H = r^{k} \odot e_{G}H = r^{k}H = (rH)^{k}. \]
Therefore $ \varphi_{N} $ is an isomorphism from $ N $ to $ G/H $, and so $ \boldvarphi : (G,N) \rightarrow (G,G/H) $ is an isomorphism of skew bracoids. 
\end{example}

The following Proposition strengthens Proposition \ref{prop_homomorphism} in the case of isomorphisms.

\begin{proposition} \label{prop_isomorphism_stabilizers}
Let $ \boldvarphi : (G,N,\odot) \rightarrow (G',N',\odot') $ be a homomorphism of skew bracoids.

If $ \boldvarphi $ is an isomorphism of skew bracoids then for each $ \eta \in N $ we have $ \Stab_{G'}(\varphi_{N}(\eta)) = \varphi(\Stab_{G}(\eta)) $. 

Conversely, if $ \varphi : G \rightarrow G $ is an isomorphism and $ \Stab_{G'}(e_{N'}) = \varphi(\Stab_{G}(e_{N})) $ then $ \boldvarphi $ is an isomorphism of skew bracoids.
\end{proposition}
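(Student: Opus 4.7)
The plan is to treat the two implications separately, in each case exploiting the skew bracoid homomorphism relation $\varphi_{N}(g \odot \eta) = \varphi(g) \odot' \varphi_{N}(\eta)$ together with the transitivity of the actions.

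For the forward direction, I would first establish the inclusion $\varphi(\Stab_{G}(\eta)) \subseteq \Stab_{G'}(\varphi_{N}(\eta))$: given $g \in \Stab_{G}(\eta)$, applying $\varphi_{N}$ to the equality $g \odot \eta = \eta$ and using the homomorphism relation yields $\varphi(g) \odot' \varphi_{N}(\eta) = \varphi_{N}(\eta)$. This inclusion does not require $\boldvarphi$ to be an isomorphism. For the reverse inclusion I would take $g' \in \Stab_{G'}(\varphi_{N}(\eta))$ and use the surjectivity of $\varphi$ to write $g' = \varphi(g)$; then $\varphi_{N}(g \odot \eta) = \varphi(g) \odot' \varphi_{N}(\eta) = \varphi_{N}(\eta)$, and now injectivity of $\varphi_{N}$ forces $g \odot \eta = \eta$, so $g \in \Stab_{G}(\eta)$ and hence $g' \in \varphi(\Stab_{G}(\eta))$.

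For the converse, note that we are already given that $\boldvarphi$ is a homomorphism of skew bracoids, so $\varphi_{N} : N \to N'$ is a group homomorphism; it remains only to show that $\varphi_{N}$ is bijective. To prove injectivity, suppose $\varphi_{N}(\eta) = e_{N'}$. Using transitivity of $G$ on $N$, write $\eta = h \odot e_{N}$; then $\varphi(h) \odot' e_{N'} = \varphi_{N}(h \odot e_{N}) = e_{N'}$, so $\varphi(h) \in \Stab_{G'}(e_{N'}) = \varphi(\Stab_{G}(e_{N}))$. Injectivity of $\varphi$ then forces $h \in \Stab_{G}(e_{N})$, whence $\eta = h \odot e_{N} = e_{N}$. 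For surjectivity, given $\eta' \in N'$, use transitivity of $G'$ on $N'$ to write $\eta' = g' \odot' e_{N'}$, then use surjectivity of $\varphi$ to write $g' = \varphi(g)$; the homomorphism relation then gives $\eta' = \varphi(g) \odot' \varphi_{N}(e_{N}) = \varphi_{N}(g \odot e_{N}) \in \Im(\varphi_{N})$.

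No step involves serious difficulty; the main subtlety is keeping track of where the bijectivity of each of $\varphi$ and $\varphi_{N}$ is used, and in particular recognising in the converse direction that the stabilizer hypothesis is exactly what is needed to transfer injectivity from $\varphi$ to $\varphi_{N}$, since the transitive action lets us represent an arbitrary kernel element of $\varphi_{N}$ as $h \odot e_{N}$ with $\varphi(h)$ lying in the stabilizer of $e_{N'}$.
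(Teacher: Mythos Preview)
Your proof is correct and follows essentially the same route as the paper. The only notable difference is in the converse direction: the paper establishes $|N|=|N'|$ via the orbit--stabilizer theorem and then deduces bijectivity of $\varphi_{N}$ from injectivity alone (using the standing finiteness convention), whereas you prove surjectivity of $\varphi_{N}$ directly from transitivity of $G'$ on $N'$ and surjectivity of $\varphi$. Your argument is marginally more elementary in that it does not invoke finiteness, but otherwise the two proofs are the same.
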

\begin{proof}
First suppose that $ \boldvarphi $ is an isomorphism. Since $ \varphi $ and $ \varphi_{N} $ are isomorphisms, for $ g \in G $ and $ \eta \in N $ we have
\begin{eqnarray*}
&&  g \odot \eta = \eta \\
& \Leftrightarrow & \varphi_{N}( g \odot \eta ) = \varphi_{N}(\eta) \\
& \Leftrightarrow & \varphi(g) \odot' \varphi_{N}(\eta) = \varphi_{N}(\eta).
\end{eqnarray*}
Thus $ \Stab_{G'}(\varphi_{N}(\eta)) = \varphi(\Stab_{G}(\eta)) $, as claimed. 

Now suppose that $ \varphi : G \rightarrow G' $ is an isomorphism and $ \Stab_{G'}(e_{N'}) = \varphi(\Stab_{G}(e_{N})) $. We shall show that the homomorphism $ \varphi_{N} : N \rightarrow N' $ is also an isomorphism. We note that
\[ |N| = \frac{ |G| }{ |\Stab_{G}(e_{N})| } = \frac{ |\varphi(G)| }{ |\varphi(\Stab_{G}(e_{N}))| } = \frac{ |G'| }{ |\Stab_{G'}(e_{N'})| } = |N'|. \]
Now let $ \eta \in N $, and write $ \eta = g \odot e_{N} $ with $ g \in G $. Then 
\[ \varphi_{N}(\eta) = \varphi_{N}(g \odot e_{N}) = \varphi(g) \odot' \varphi_{N}(e_{N}) = \varphi(g) \odot' e_{N'}, \]
so $ \eta \in \ker(\varphi_{N}) $ if and only if $ \varphi(g) \in \Stab_{G'}(e_{N'}) $. Since $ \Stab_{G'}(e_{N'}) = \varphi(\Stab_{G}(e_{N})) $, this occurs if and only if $ \eta = e_{N} $. Therefore $ \varphi_{N} $ is injective, hence bijective. This completes the proof. 
\end{proof}

Since $ \ker(\lambda_{\odot}) = \bigcap_{\eta \in N} \Stab_{G}(\eta) $, and similarly for $  \ker(\lambda_{\odot'}) $, we obtain

\begin{corollary} \label{cor_isomorphic_kernel_of_action}
If $ \boldvarphi : (G,N,\odot) \rightarrow (G',N',\odot') $ is an isomorphism of skew bracoids then $  \ker(\lambda_{\odot'}) = \varphi(\ker(\lambda_{\odot})) $. 
\end{corollary}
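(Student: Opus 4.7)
The plan is to reduce the identity to Proposition \ref{prop_isomorphism_stabilizers} by writing each kernel as an intersection of stabilizers over its group's action, and then shuttling between the two using the bijections $\varphi$ and $\varphi_{N}$.

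First I would recall the identity $\ker(\lambda_{\odot}) = \bigcap_{\eta \in N} \Stab_{G}(\eta)$ (already noted in the remark that precedes Corollary \ref{cor_isomorphic_kernel_of_action}), and the analogous identity $\ker(\lambda_{\odot'}) = \bigcap_{\eta' \in N'} \Stab_{G'}(\eta')$. Since $\varphi_{N} : N \to N'$ is a bijection, as $\eta$ ranges over $N$ the element $\varphi_{N}(\eta)$ ranges over all of $N'$, so
\[ \ker(\lambda_{\odot'}) = \bigcap_{\eta \in N} \Stab_{G'}(\varphi_{N}(\eta)). \]

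Next I would apply Proposition \ref{prop_isomorphism_stabilizers} to each term, obtaining $\Stab_{G'}(\varphi_{N}(\eta)) = \varphi(\Stab_{G}(\eta))$, and so
\[ \ker(\lambda_{\odot'}) = \bigcap_{\eta \in N} \varphi(\Stab_{G}(\eta)). \]

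Finally, because $\varphi : G \to G'$ is a bijection, it commutes with arbitrary intersections of subsets, giving
\[ \bigcap_{\eta \in N} \varphi(\Stab_{G}(\eta)) = \varphi\!\left( \bigcap_{\eta \in N} \Stab_{G}(\eta) \right) = \varphi(\ker(\lambda_{\odot})), \]
which is the desired equality. The proof is short and the only subtle point is justifying the interchange of $\varphi$ with the intersection, but this is routine once one notes that $\varphi$ is bijective; I do not expect any real obstacle.
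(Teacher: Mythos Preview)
Your proof is correct and follows exactly the approach the paper intends: the corollary is stated immediately after the observation that $\ker(\lambda_{\odot}) = \bigcap_{\eta \in N} \Stab_{G}(\eta)$, and you have simply written out the details of combining this with Proposition \ref{prop_isomorphism_stabilizers} and the bijectivity of $\varphi$ and $\varphi_{N}$.
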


By combining many of the results of this section, we prove a version of the First Isomorphism Theorem for skew bracoids. 

\begin{theorem} \label{thm_FIT}
Suppose that $ \boldvarphi : (G,N,\odot) \rightarrow (G',N',\odot') $ is a homomorphism of skew bracoids, and let $ M = \ker(\varphi_{N}) $. Then the reduced forms of $ (G, N/M) $ and $ (\Im(\varphi), \Im(\varphi_{N})) $ are isomorphic skew bracoids. 
\end{theorem}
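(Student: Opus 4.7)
The plan is to construct a natural surjective skew bracoid homomorphism $(G, N/M) \to (\Im(\varphi), \Im(\varphi_N))$ induced by $\boldvarphi$, and then show that passing to reduced forms turns it into an isomorphism. By Proposition \ref{prop_homomorphism_kernel_image}, $M$ is an ideal of $(G,N)$ (so by Proposition \ref{prop_quotient_skew_bracoid} we really do have a skew bracoid $(G, N/M)$) and $(\Im(\varphi), \Im(\varphi_N))$ is a subskew bracoid of $(G', N')$, so both objects appearing in the statement make sense.

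Define $\psi : G \to \Im(\varphi)$ to be $\varphi$ with restricted codomain, and $\psi_N : N/M \to \Im(\varphi_N)$ by $\psi_N(\eta M) = \varphi_N(\eta)$. Since $\ker(\varphi_N) = M$, the map $\psi_N$ is well defined and, by the classical first isomorphism theorem for groups, a group isomorphism. The skew bracoid compatibility \eqref{eqn_homomorphism_relation} for $(\psi, \psi_N)$ follows from $\psi_N(g \odot \eta M) = \varphi_N(g \odot \eta) = \varphi(g) \odot' \varphi_N(\eta) = \psi(g) \odot' \psi_N(\eta M)$, where the middle equality is the hypothesis that $\boldvarphi$ is a skew bracoid homomorphism. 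Thus $(\psi, \psi_N)$ is a skew bracoid homomorphism whose $N$-component is already a group isomorphism.

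Let $K_1$ denote the kernel of the permutation representation of $G$ on $N/M$ and $K_2$ the kernel of the permutation representation of $\Im(\varphi)$ on $\Im(\varphi_N)$; by Proposition \ref{prop_reduced_form} the reduced forms are $(G/K_1, N/M)$ and $(\Im(\varphi)/K_2, \Im(\varphi_N))$ respectively. The crucial step is the identification $\varphi^{-1}(K_2) = K_1$: unpacking definitions and using the injectivity of $\psi_N$, an element $g \in G$ satisfies $\varphi(g) \in K_2$ iff $\varphi(g) \odot' \varphi_N(\eta) = \varphi_N(\eta)$ for every $\eta \in N$, iff $(g \odot \eta) M = \eta M$ for every $\eta \in N$, iff $g \in K_1$. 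Since $\varphi$ maps $G$ onto $\Im(\varphi)$, the standard group-theoretic argument yields an isomorphism $\bar\psi : G/K_1 \to \Im(\varphi)/K_2$, and a short verification using that $K_2$ fixes $\Im(\varphi_N)$ pointwise shows that $(\bar\psi, \psi_N)$ preserves the actions and hence gives an isomorphism of the reduced skew bracoids.

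The main obstacle is the identification $\varphi^{-1}(K_2) = K_1$, where the skew bracoid compatibility condition (rather than only the group homomorphism properties of $\varphi$ and $\varphi_N$) enters essentially. Once this is in hand, everything else reduces to routine manipulation of definitions and the construction of reduced forms from Proposition \ref{prop_reduced_form}.
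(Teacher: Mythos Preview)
Your proof is correct and follows essentially the same route as the paper's: both construct the induced skew bracoid homomorphism $(G,N/M)\to(\Im(\varphi),\Im(\varphi_N))$ whose $N$-component is already a group isomorphism, then establish the key identity $\varphi^{-1}(K_2)=K_1$ (the paper phrases this as $\ker(\theta)=K$ where $\theta$ is $\varphi$ composed with the projection to $G'/K'$) using exactly the same chain of equivalences via the injectivity of the induced map on $N/M$, and conclude that the resulting map of quotients is a skew bracoid isomorphism. The only cosmetic difference is that the paper verifies the final isomorphism by invoking the stabilizer criterion of Proposition~\ref{prop_homomorphism} and then identifying the induced map with $\varphi_{N/M}$, whereas you check the action compatibility for $(\bar\psi,\psi_N)$ directly; both are straightforward.
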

\begin{proof}
By Proposition \ref{prop_homomorphism_kernel_image} $ (\Im(\varphi), \Im(\varphi_{N})) $ is a subskew bracoid of $ (G',N') $; to ease notation we will relabel (without loss of generality) so that $ \Im(\varphi)=G' $ and $ \Im(\varphi_{N})) = N' $. 

With this relabelling, the group homomorphism $ \varphi_{N} : N \rightarrow N' $ is a surjection, and induces an isomorphism $ N/M \rightarrow N' $; we denote this by $ \varphi_{N/M} $. 

By Proposition \ref{prop_homomorphism_kernel_image} we see that $ M $ is an ideal of $ (G,N) $, and so $ (G,N/M) $ is a skew bracoid (Proposition \ref{prop_quotient_skew_bracoid}). For all $ g \in G $ and $ \eta M \in N/M $ we have
\[ \varphi(g) \odot' \varphi_{N/M}(\eta M) = \varphi(g) \odot' \varphi_{N}(\eta) = \varphi_{N}(g \odot \eta) = \varphi_{N/M}(g \odot \eta M), \]
so (abusing notation) we obtain a homomorphism of skew bracoids $ \boldvarphi : (G,N/M,\odot) \rightarrow (G',N',\odot') $ in which $ \varphi : G \rightarrow G' $ is surjective and $ \varphi_{N/M} : N/M \rightarrow N' $ is an isomorphism. 

Now let $ K $ denote the kernel of the action of $ G $ on $ N/M $ and let $ K' $ denote the kernel of the action of $ G' $ on $ N' $. 

Let $ \theta : G \rightarrow G'/K' $ be the composition of $ \varphi : G \rightarrow G' $ with the natural projection; recalling our relabelling $ \Im(\varphi)=G' $, we see that $ \theta $ is a surjection. We have
\begin{eqnarray*}
g \in \ker(\theta) & \Leftrightarrow & \varphi(g) \in K' \\
& \Leftrightarrow & \varphi(g) \odot' \varphi_{N/M}(\eta M) = \varphi_{N/M}(\eta M) \mbox{ for all } \eta M \in N/M \\
& \Leftrightarrow & \varphi_{N/M}( g \odot \eta M) = \varphi_{N/M}(\eta M) \mbox{ for all } \eta M \in N/M \\
& \Leftrightarrow & g \odot \eta M = \eta M \mbox{ for all } \eta M \in N/M \mbox{ since $ \varphi_{N/M} $ is an isomorphism }\\
& \Leftrightarrow & g \in K. 
\end{eqnarray*}
Hence the surjection $ \theta : G \rightarrow G'/K' $ induces an isomorphism $ \theta :G/K \rightarrow G'/K' $. Since $ \varphi : G \rightarrow G' $ maps $ \Stab_{G}(e_{N}M) $ into $ \Stab_{G'}(e_{N'}) $, the map $ \theta $ also has this property; it therefore induces a map $ \theta_{N/M} : N/M \rightarrow N' $. This is given by 
\[ \theta_{N/M}( g \odot e_{N} M ) = \theta(g) \odot' e_{N'} = \varphi(g) \odot' e_{N'} = \varphi_{N/M}(g \odot e_{N} M). \]
Since $ \varphi_{N/M} : N/M \rightarrow N' $ is an isomorphism, we see that $ \theta_{N/M} $ is an isomorphism, and so $ \theta, \theta_{N/M} $ form an isomorphism of skew bracoids $ (\bar{G}, N/M) \rightarrow (\bar{G'}, N') $. 
\end{proof}

\begin{corollary} \label{cor_if_isomorphic_then_reduced_forms_isomorphic}
If $ (G,N,\odot) \cong (G',N',\odot') $ then $ (\bar{G},N,\odot) \cong (\bar{G'},N',\odot') $. 
\end{corollary}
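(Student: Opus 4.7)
The statement is essentially an immediate consequence of Theorem \ref{thm_FIT}. The plan is to apply that theorem to the given isomorphism $ \boldvarphi : (G,N,\odot) \rightarrow (G',N',\odot') $, noting that the kernel is trivial and the image is the whole codomain, so both pieces of data on which Theorem \ref{thm_FIT} operates simplify drastically.

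Concretely, set $ M = \ker(\varphi_{N}) $. Since $ \varphi_{N} : N \rightarrow N' $ is an isomorphism, we have $ M = \{ e_{N} \} $, and the natural projection $ N \twoheadrightarrow N/M $ is an isomorphism of groups, under which the transitive $ G $-action on $ N/M $ corresponds with the original action on $ N $. Consequently $ (G, N/M, \odot) $ and $ (G, N, \odot) $ are canonically isomorphic skew bracoids, and so have the same reduced form $ (\bar{G}, N, \odot) $ (up to isomorphism). Similarly, since $ \boldvarphi $ is surjective, we have $ \Im(\varphi) = G' $ and $ \Im(\varphi_{N}) = N' $, so the subskew bracoid $ (\Im(\varphi), \Im(\varphi_{N})) $ of $ (G',N') $ is just $ (G',N',\odot') $ itself, whose reduced form is $ (\bar{G'}, N', \odot') $.

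Theorem \ref{thm_FIT} now asserts that the reduced forms of $ (G, N/M) $ and $ (\Im(\varphi), \Im(\varphi_{N})) $ are isomorphic skew bracoids; combining this with the identifications in the previous paragraph yields $ (\bar{G}, N, \odot) \cong (\bar{G'}, N', \odot') $, as required.

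Alternatively, one can give a direct construction without appealing to Theorem \ref{thm_FIT}: by Corollary \ref{cor_isomorphic_kernel_of_action} the isomorphism $ \varphi : G \rightarrow G' $ sends $ K := \ker(\lambda_{\odot}) $ to $ K' := \ker(\lambda_{\odot'}) $, so it descends to a group isomorphism $ \bar{\varphi} : \bar{G} \rightarrow \bar{G'} $ given by $ \bar{\varphi}(gK) = \varphi(g)K' $. The pair $ (\bar{\varphi}, \varphi_{N}) $ then satisfies the skew bracoid homomorphism condition: for $ g \in G $ and $ \eta \in N $,
\[ \varphi_{N}((gK) \odot \eta) = \varphi_{N}(g \odot \eta) = \varphi(g) \odot' \varphi_{N}(\eta) = \bar{\varphi}(gK) \odot' \varphi_{N}(\eta), \]
using the definition of the action of $ \bar{G} $ on $ N $ from Proposition \ref{prop_reduced_form}. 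Since both $ \bar{\varphi} $ and $ \varphi_{N} $ are group isomorphisms, this is an isomorphism of skew bracoids. Either route is routine; the only point to verify carefully is that the action descends, which is exactly the content of Corollary \ref{cor_isomorphic_kernel_of_action}.
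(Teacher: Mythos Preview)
Your primary argument via Theorem~\ref{thm_FIT} is correct and is exactly the paper's approach: the paper simply notes that $\varphi_{N}$ being an isomorphism forces $M=\{e_{N}\}$ and then invokes Theorem~\ref{thm_FIT}. Your alternative direct construction using Corollary~\ref{cor_isomorphic_kernel_of_action} is also valid and is essentially what Theorem~\ref{thm_FIT} unwinds to in this special case.
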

\begin{proof}
If $ \boldvarphi : (G,N,\odot) \rightarrow (G',N,\odot') $ is an isomorphism then $ \varphi_{N} : N \rightarrow N' $ is an isomorphism, so in the notation of Theorem \ref{thm_FIT} we have $ M = \{ e_{N} \} $. Now Theorem \ref{thm_FIT} implies that $ (\bar{G},N,\odot) \cong (\bar{G'},N',\odot') $. 
\end{proof}

We have already recalled the result that if $ (B,\star) $ is a group and $ \cdot, \cdot' $ are two further binary operations on $ B $ such that $ (B,\star,\cdot) $ and $ (B,\star,\cdot') $ are skew braces, then we have $ (B,\star,\cdot) = (B,\star,\cdot') $ if and only if $ \lambda_{\bullet}(B) = \lambda_{\bullet'}(B) $ inside $ \Hol_{\star}(B) $; this motivated our definition of equivalence of skew bracoids (Definition \ref{defn_equivalent}). 

We now recall that, furthermore, we have $ (B,\star,\cdot) \cong (B,\star,\cdot') $ if and only if there exists $ \theta \in \Aut_{\star}(B) $ such that $ \lambda_{\bullet'}(B) = \theta \lambda_{\bullet}(B) \theta^{-1} $. We can detect isomorphisms between reduced skew bracoids in a similar way. 

\begin{proposition}
Let $ N $ be a group and let $ (G,N,\odot) $ and  $ (G',N,\odot') $ be reduced skew bracoids. Then $ (G,N,\odot) \cong (G',N,\odot') $ if and only if there exists $ \theta \in \Aut(N) $ such that
\[ \lambda_{\odot'}(G') = \theta \lambda_{\odot}(G) \theta^{-1} \subseteq \Hol(N). \]
\end{proposition}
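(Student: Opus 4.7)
The plan is to translate the compatibility condition \eqref{eqn_homomorphism_relation} into an operator identity in $\Perm(N)$ and exploit the fact that reduction makes both $\lambda_{\odot}$ and $\lambda_{\odot'}$ injective, so that elements of $G$ and $G'$ can be recovered unambiguously from their images in $\Hol(N)$.

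For the forward direction, suppose $\boldvarphi : (G,N,\odot) \rightarrow (G',N,\odot')$ is an isomorphism. I would set $\theta = \varphi_{N} \in \Aut(N)$. The homomorphism relation $\varphi_{N}(g \odot \eta) = \varphi(g) \odot' \varphi_{N}(\eta)$ rewrites as the operator identity $\lambda_{\odot'}(\varphi(g)) = \theta\, \lambda_{\odot}(g)\, \theta^{-1}$ in $\Perm(N)$ for every $g \in G$. Ranging over $g$, and using the surjectivity of $\varphi : G \rightarrow G'$, yields the set equality $\lambda_{\odot'}(G') = \theta\, \lambda_{\odot}(G)\, \theta^{-1}$, with both sides contained in $\Hol(N)$ by Theorem \ref{thm_characterizations}.

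For the reverse direction, given $\theta \in \Aut(N)$ with $\lambda_{\odot'}(G') = \theta\, \lambda_{\odot}(G)\, \theta^{-1}$, the reduced hypothesis makes $\lambda_{\odot} : G \rightarrow \lambda_{\odot}(G)$ and $\lambda_{\odot'} : G' \rightarrow \lambda_{\odot'}(G')$ group isomorphisms. I would define $\varphi : G \rightarrow G'$ by $\varphi(g) = \lambda_{\odot'}^{-1}\bigl(\theta\, \lambda_{\odot}(g)\, \theta^{-1}\bigr)$, which is well defined by the set equality and bijective as a composition of three group isomorphisms (namely $\lambda_{\odot}$, conjugation by $\theta$, and $\lambda_{\odot'}^{-1}$). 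Setting $\varphi_{N} = \theta$ gives an automorphism of $N$, and the homomorphism relation reduces at once to
\[ \varphi(g) \odot' \varphi_{N}(\eta) = \bigl(\theta\, \lambda_{\odot}(g)\, \theta^{-1}\bigr)(\theta(\eta)) = \theta(g \odot \eta) = \varphi_{N}(g \odot \eta), \]
so $\boldvarphi$ is an isomorphism of skew bracoids.

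I do not expect any serious obstacle; the only delicate point is that the reduced hypothesis is essential in the reverse direction, since without injectivity of $\lambda_{\odot}$ and $\lambda_{\odot'}$ there is no canonical way to promote the permutation identity to a homomorphism $\varphi : G \rightarrow G'$. The statement is the natural skew bracoid analogue of the corresponding classical result for skew braces, with reduction playing the role that regularity of $\lambda_{\bullet}$ plays there.
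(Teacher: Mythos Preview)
Your proof is correct and takes essentially the same approach as the paper's: both directions hinge on the operator identity $\lambda_{\odot'}(\varphi(g)) = \theta\,\lambda_{\odot}(g)\,\theta^{-1}$ with $\theta = \varphi_N$. In the converse direction the paper routes through Proposition~\ref{prop_homomorphism} (checking $\varphi(\Stab_G(e_N)) = \Stab_{G'}(e_N)$ and then identifying the induced $\varphi_N$ with $\theta$), whereas you set $\varphi_N = \theta$ from the outset and verify \eqref{eqn_homomorphism_relation} directly---a mild streamlining, but not a genuinely different route.
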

\begin{proof}
First suppose that $ \boldvarphi: (G,N,\odot) \rightarrow (G',N,\odot') $ is an isomorphism of skew bracoids. Then $ \varphi_{N} : N \rightarrow  N $ is an automorphism of $ N $, and for all $ g \in G $ and $ \eta \in N $ we have
\[ \varphi(g) \odot' \varphi_{N}(\eta) = \varphi_{N}(g \odot \eta). \]
Thus for all $ g \in G $ we have
\[ \varphi_{N}^{-1} \lambda_{\odot'}(g) \varphi_{N} = \lambda_{\odot}(g), \]
and so, choosing $ \theta = \varphi_{N} \in \Aut(N) $, we have $ \lambda_{\odot'}(G') = \theta \lambda_{\odot}(G) \theta^{-1} $. 

Conversely suppose that there exists $ \theta \in \Aut(N) $ such that $ \lambda_{\odot'}(G') = \theta \lambda_{\odot}(G) \theta^{-1} $. Then for each $ g \in G $ there exists a unique element $ \varphi(g) \in G' $ such that 
\[ \theta \lambda_{\odot}(g) \theta^{-1} = \lambda_{\odot'}(\varphi(g)). \]
We see that $ \varphi : G \rightarrow G' $ is an isomorphism and that $ \Stab_{G'}(e_{N}) = \varphi(\Stab_{G}(e_{N})) $, so $ \varphi $ induces a bijection $ \varphi_{N} : N \rightarrow N $ defined by $ \varphi_{N}(g \odot e_{N}) = \varphi_{N}(g) \odot' e_{N'} $. Now for each $ g \in G $ we have
\begin{eqnarray*}
\varphi_{N}(g \odot e_{N}) & = & \varphi(g) \odot' e_{N'} \\
& = & \theta \lambda_{\odot}(g) \theta^{-1} [e_{N}] \\
& = & \theta(g \odot e_{N}).
\end{eqnarray*}
Thus $ \varphi_{N} = \theta \in \Aut(N) $, and so $ \varphi, \varphi_{N} $ form an isomorphism of skew bracoids. Hence $ (G,N,\odot) \cong (G',N,\odot') $.
\end{proof}

\begin{corollary}
Let $ N $ be a group and let $ (G,N,\odot) $ be a reduced skew bracoid. Then the number of equivalence classes of skew bracoids that are isomorphic to $ (G,N,\odot) $ is equal to 
\[ \frac{ |\Aut(N)| }{ |\Aut_{\odot}(N)| }, \]
where $ \Aut_{\odot}(N) = \{ \theta \in \Aut(N) \mid \theta \mbox{ normalizes } \lambda_{\odot}(G) \} $. 
\end{corollary}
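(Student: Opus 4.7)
The plan is to combine the two immediately preceding results with the orbit-stabilizer theorem. First, I would invoke the Corollary just stated, which gives a bijection between equivalence classes of skew bracoids on $ N $ and transitive subgroups of $ \Hol(N) $, sending $ (G', N, \odot') $ to $ \lambda_{\odot'}(G') $. Every equivalence class has a reduced representative by Proposition \ref{prop_reduced_form}, and by Corollary \ref{cor_if_isomorphic_then_reduced_forms_isomorphic} isomorphism descends to reduced forms; hence the equivalence classes containing some skew bracoid isomorphic to $ (G, N, \odot) $ correspond, under the bijection above, to the transitive subgroups of $ \Hol(N) $ that arise as $ \lambda_{\odot'}(G') $ for some reduced skew bracoid $ (G', N, \odot') $ isomorphic to $ (G, N, \odot) $.

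Second, I would apply the Proposition immediately preceding this Corollary, which characterizes such subgroups as precisely the $ \Aut(N) $-conjugates of $ \lambda_{\odot}(G) $ inside $ \Hol(N) $. Thus the set in question is exactly the orbit of $ \lambda_{\odot}(G) $ under the conjugation action of $ \Aut(N) $ on subgroups of $ \Hol(N) $. The stabilizer of $ \lambda_{\odot}(G) $ under this action is, by definition, $ \Aut_{\odot}(N) $, so the orbit-stabilizer theorem yields the claimed count $ |\Aut(N)| / |\Aut_{\odot}(N)| $.

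I expect no serious obstacle: the argument is essentially bookkeeping given the infrastructure developed above. The only point warranting a moment's care is to verify that the passage to reduced forms is compatible with both equivalence (which is immediate, since a skew bracoid is equivalent to its reduced form) and isomorphism (which is Corollary \ref{cor_if_isomorphic_then_reduced_forms_isomorphic}), so that the enumeration in terms of $ \Aut(N) $-orbits on subgroups of $ \Hol(N) $ is well-defined and counts what we want.
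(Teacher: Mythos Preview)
Your argument is correct and is exactly the route the paper intends: the corollary is stated without proof, as an immediate consequence of the preceding proposition (isomorphism of reduced skew bracoids $\Leftrightarrow$ $\Aut(N)$-conjugacy of the images in $\Hol(N)$) together with the earlier bijection between equivalence classes and transitive subgroups of $\Hol(N)$, and your use of orbit--stabilizer is precisely what is implicit. One small simplification: you do not actually need Corollary~\ref{cor_if_isomorphic_then_reduced_forms_isomorphic} (which in the paper appears \emph{after} the present corollary); since $(G,N,\odot)$ is reduced, Corollary~\ref{cor_isomorphic_kernel_of_action} already shows that any skew bracoid isomorphic to it is itself reduced, so the passage to reduced representatives is automatic.
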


\begin{corollary}
Let $ N $ be a group and let $ (G,N,\odot) $ and  $ (G',N,\odot') $ be skew bracoids. Then the reduced forms of $ (G,N,\odot) $ and $ (G',N,\odot') $ are isomorphic if and only if there exists $ \theta \in \Aut(N) $ such that
\[ \lambda_{\odot'}(G') = \theta \lambda_{\odot}(G) \theta^{-1} \subseteq \Hol(N). \]
\end{corollary}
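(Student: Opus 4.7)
The plan is to derive this as a direct consequence of the preceding proposition, which handles the reduced case, by observing that reduction does not alter the image of the permutation representation.

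First I would record the key observation underlying the reduction: if $(G,N,\odot)$ is any skew bracoid and $(\bar{G},N,\bar{\odot})$ is its reduced form (Proposition \ref{prop_reduced_form}), then $\lambda_{\bar{\odot}}(\bar{G}) = \lambda_{\odot}(G)$ as subsets of $\Hol(N)$. This is immediate from the construction $\bar{G} = G/\ker(\lambda_{\odot})$ together with the definition $(gK) \odot \eta = g \odot \eta$, which shows that $\lambda_{\bar{\odot}}$ is precisely the factorization of $\lambda_{\odot}$ through its kernel, so the two maps have identical images. The same holds for $(G',N,\odot')$ and its reduced form.

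Next I would apply the preceding proposition (which establishes the equivalence for reduced skew bracoids) to the reduced forms $(\bar{G},N,\bar{\odot})$ and $(\bar{G'},N,\bar{\odot'})$. That proposition says the reduced forms are isomorphic if and only if there exists $\theta \in \Aut(N)$ with
\[ \lambda_{\bar{\odot'}}(\bar{G'}) = \theta \lambda_{\bar{\odot}}(\bar{G}) \theta^{-1} \subseteq \Hol(N). \]
Substituting the identifications from the first step, this is exactly the condition
\[ \lambda_{\odot'}(G') = \theta \lambda_{\odot}(G) \theta^{-1} \subseteq \Hol(N), \]
which proves the corollary.

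No real obstacle arises, since both ingredients (Proposition \ref{prop_reduced_form} and the preceding proposition) are already in hand; the only care needed is to verify cleanly that the image of $\lambda_{\odot}$ is preserved on passing to the reduced form, after which the statement reduces to the already-proven reduced case.
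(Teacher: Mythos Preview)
Your proposal is correct and is precisely the argument the paper has in mind: the corollary is stated without proof immediately after the proposition for reduced skew bracoids, and the only missing step is the observation that $\lambda_{\bar{\odot}}(\bar{G}) = \lambda_{\odot}(G)$, which you supply cleanly. There is nothing to add.
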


We conclude this section by exploring some more interactions between isomorphism and equivalence of skew bracoids. 

\begin{proposition}
Let $ N $ be a group. Then two skew bracoids $ (G,N,\odot) $ and $ (G',N,\odot') $ are equivalent if and only if there is an isomorphism $ \boldvarphi : (\bar{G},N,\odot) \rightarrow (\bar{G'},N,\odot') $ such that $ \varphi_{N} : N \rightarrow N $ is the identity map.
\end{proposition}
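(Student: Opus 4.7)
The plan is to exploit the fact that in a reduced skew bracoid $(\bar{G},N,\odot)$ the map $\lambda_{\odot} : \bar{G} \hookrightarrow \Hol(N)$ is injective, so $\bar{G}$ and $\bar{G'}$ can be identified with the subgroups $\lambda_{\odot}(G)$ and $\lambda_{\odot'}(G')$ of $\Hol(N)$ respectively. Equivalence of $(G,N,\odot)$ and $(G',N,\odot')$ is, by Definition \ref{defn_equivalent}, exactly the statement that these two subgroups of $\Hol(N)$ coincide.

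For the forward direction I would assume $\lambda_{\odot}(G)=\lambda_{\odot'}(G')$ and construct $\varphi : \bar{G} \rightarrow \bar{G'}$ as the composition $\lambda_{\odot'}^{-1} \circ \lambda_{\odot}$, which is a well-defined group isomorphism precisely because both maps are injective with the same image. I would then verify that for each $g \in G$, $\varphi(gK)$ is the unique element of $\bar{G'}$ satisfying $\lambda_{\odot'}(\varphi(gK)) = \lambda_{\odot}(g)$, so that in particular $\varphi(gK) \odot' e_{N} = g \odot e_{N}$. Using the formula $\varphi_{N}(g \odot e_{N}) = \varphi(gK) \odot' e_{N}$ coming from Proposition \ref{prop_homomorphism} (once we check $\varphi(\Stab_{\bar{G}}(e_{N})) \subseteq \Stab_{\bar{G'}}(e_{N})$, which is immediate since $\lambda_{\odot'}(\varphi(gK))$ and $\lambda_{\odot}(g)$ agree as permutations of $N$), we then read off $\varphi_{N}=\mathrm{id}_{N}$. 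Finally, Proposition \ref{prop_isomorphism_stabilizers} upgrades the pair $(\varphi,\varphi_{N})$ to an isomorphism of skew bracoids, again because the stabilizers match as a consequence of equality of the permutation images.

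For the reverse direction, suppose $\boldvarphi : (\bar{G},N,\odot) \rightarrow (\bar{G'},N,\odot')$ is an isomorphism with $\varphi_{N}=\mathrm{id}_{N}$. Then for every $gK \in \bar{G}$ and every $\eta \in N$ the homomorphism condition \eqref{eqn_homomorphism_relation} gives
\[ \lambda_{\odot'}(\varphi(gK))[\eta] = \varphi(gK) \odot' \varphi_{N}(\eta) = \varphi_{N}(gK \odot \eta) = gK \odot \eta = \lambda_{\odot}(gK)[\eta], \]
so $\lambda_{\odot'}(\varphi(gK)) = \lambda_{\odot}(gK)$ as permutations of $N$. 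Since $\varphi$ is a bijection between $\bar{G}$ and $\bar{G'}$, this gives $\lambda_{\odot}(G) = \lambda_{\odot}(\bar{G}) = \lambda_{\odot'}(\bar{G'}) = \lambda_{\odot'}(G')$ inside $\Hol(N)$, which is exactly equivalence.

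The only mildly subtle point is ensuring in the forward direction that the map $\varphi$ built from $\lambda_{\odot'}^{-1} \circ \lambda_{\odot}$ really does land in $\bar{G'}$ with matching stabilizers so that Proposition \ref{prop_homomorphism} and Proposition \ref{prop_isomorphism_stabilizers} apply; but this is automatic from the identification of the two images in $\Hol(N)$, so there is no serious obstacle.
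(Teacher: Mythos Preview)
Your proposal is correct and follows essentially the same approach as the paper's proof: both directions use $\varphi = \lambda_{\odot'}^{-1}\circ\lambda_{\odot}$ in the forward direction to produce the isomorphism with $\varphi_N = \mathrm{id}_N$, and the homomorphism relation \eqref{eqn_homomorphism_relation} in the reverse direction to recover $\lambda_{\odot}(\bar{G}) = \lambda_{\odot'}(\bar{G'})$. The only cosmetic difference is that you invoke Proposition~\ref{prop_isomorphism_stabilizers} to ``upgrade'' to an isomorphism, whereas the paper simply observes that once $\varphi_N$ is the identity it is trivially a group isomorphism, so the pair is an isomorphism of skew bracoids without further appeal.
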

\begin{proof}
First suppose that $ (G,N,\odot) $ and $ (G',N,\odot') $ are equivalent, so that $ \lambda_{\odot}(G) = \lambda_{\odot'}(G') \subseteq \Hol(N) $. Passing to the reduced forms we also have $ \lambda_{\odot}(\bar{G}) = \lambda_{\odot'}(\bar{G'}) \subseteq \Hol(N) $, with each of $ \lambda_{\odot} $ and $ \lambda_{\odot'} $ now being injective. The map $ \varphi : \bar{G} \rightarrow \bar{G'} $ defined by $ \varphi = \lambda_{\odot'}^{-1} \lambda_{\odot} $ is therefore an isomorphism, and for all $ \bar{g} \in \bar{G} $ we have 
\[ \varphi(\bar{g}) \odot' e_{N} = \bar{g} \odot e_{N}. \]
Therefore $ \varphi(\bar{g})  $ stabilizes $ e_{N} $ if and only if $ \bar{g} $ does, and so $ \varphi $ induces a map $ \varphi_{N} : N \rightarrow N $, which is given by
\[ \varphi_{N}( \bar{g} \odot e_{N} ) = \varphi(\bar{g}) \odot' e_{N} = \bar{g} \odot e_{N} \mbox{ for all } \bar{g} \in \bar{G}. \]
Therefore $ \varphi_{N} : N \rightarrow N $ is the identity map, and so $ \boldvarphi $ is an isomorphism of skew bracoids of the form given in the proposition.

Conversely, suppose that there is an isomorphism $ \boldvarphi : (\bar{G},N,\odot) \rightarrow (\bar{G'},N,\odot') $ such that $ \varphi_{N} : N \rightarrow N $ is the identity map. Then for all $ \bar{g} \in \bar{G} $ and all $ \eta \in N $ we have
\[ \bar{g} \odot \eta = \varphi_{N}(\bar{g} \odot \eta) = \varphi(\bar{g}) \odot' \varphi_{N}(\eta) = \varphi(\bar{g}) \odot' \eta, \]
and so $ \lambda_{\odot}(\bar{G}) = \lambda_{\odot'}(\bar{G'}) \subseteq \Hol(N) $. But $ \lambda_{\odot}(\bar{G}) = \lambda_{\odot}(G) $ by the definition of $ \bar{G} $, and similarly $ \lambda_{\odot'}(\bar{G'}) = \lambda_{\odot'}(G') $. Therefore $ \lambda_{\odot}(G)=\lambda_{\odot'}(G') $, and so $ (G,N) $ and $ (G',N) $ are equivalent. 
\end{proof}

\section{Connecting skew bracoids with Hopf-Galois structures on separable extensions} \label{sec_HGS}

In Section \ref{sec_introduction} we briefly summarized the connection between skew braces and Hopf-Galois structures on Galois field extensions. In this section we generalize this to a connection between skew bracoids and Hopf-Galois structures on (finite) separable extensions. We begin by describing in more detail the results we generalize.

A theorem of Greither and Pareigis \cite{GP87} classifies the Hopf-Galois structures admitted by a separable extension of fields, as follows: let $ \widetilde{L} $ be the Galois closure of $ L/K $, let $ J = (J,\cdot) = \Gal(\widetilde{L}/K) $, let $ J' = \Gal(\widetilde{L}/L) $, and consider the left coset space $ J/J' $. Consider the left translation map $ \lambda_{\odot} : J \rightarrow \Perm(J/J') $ defined by $ \lambda_{\odot}(j)[xJ'] = jxJ' $, and the action of $ J $ on $ \Perm(J/J') $ by $ \,^{j} \eta = \lambda_{\odot}(j) \eta \lambda_{\odot}(j)^{-1} $ for all $ j \in J $ and $ \eta \in \Perm(J/J') $. 
There is a bijection between regular subgroups of $ \Perm(J/J') $ stable under this action of $ J $ (\textit{$ J $-stable regular subgroups}) and Hopf-Galois structures on $ L/K $. 

If $ L/K $ is a Galois extension then $ J' $ is trivial, so the Greither Pareigis theorem implies that there is a bijection between regular $ J $-stable subgroups of $ \Perm(J) $ and Hopf-Galois structures on $ L/K $. There are various approaches to showing that $ J $-stable regular subgroups of $ \Perm(J) $ are connected with skew braces; we follow Stefanello and Trappeniers \cite{ST22}. There is a bijection between binary operations $ \star $ on $ J $ such that $ (J,\star) $ is a group and regular subgroups of $ \Perm(J) $, given by 
\begin{equation}
\star \leftrightarrow \rho_{\star}(J), \mbox{ where } \rho_{\star}(j)[x] = x \star j^{-1}. 
\end{equation}
Recalling that $ \cdot $ denotes the original binary operation on $ J $, we find that $ (J,\star,\cdot) $ is a skew brace if and only if $ \rho_{\star}(J) $ is $ J $-stable, which occurs if and only if it yields a Hopf-Galois structure on $ L/K $. In this way Stefanello and Trappeniers obtain a bijection between binary operations $ \star $ on $ J $ such that $ (J,\star,\cdot) $ is a skew brace and Hopf-Galois structures on $ L/K $. 

Returning to the case in which $ L/K $ is separable, but possibly non-normal, we observe that we may replace the Galois closure $ \widetilde{L} $ in the statement of the Greither-Pareigis theorem with any (finite) Galois extension $ E $ of $ K $ that contains $ L $. To see this, let $ G = \Gal(E/K) $, $ \widetilde{G} = \Gal(E/\widetilde{L}) $, and $ G' = \Gal(E/L) $ (note that $ \widetilde{G} $ is a normal subgroup of $ G $). Then $ J \cong G/\widetilde{G} $, $ J' \cong G'/\widetilde{G} $, and there is a natural identification of coset spaces $ G / G'  \rightarrow J / J' $ given by $ gG' \mapsto (g\widetilde{G})J' $. Moreover, the left translation map $ \lambda_{\odot} : G \rightarrow \Perm(G/G') $ has kernel $ \widetilde{G} $, and so factors through $ J $. Therefore there is a bijection between $ G $-stable regular subgroups of $ \Perm(G / G') $ and $ J $-stable regular subgroups of $ \Perm(J/J') $. 

Therefore, throughout this section we denote by $ E/K $ a Galois extension of fields with Galois group $ (G,\cdot) $, by $ L $ an intermediate field of $ E/K $ with corresponding subgroup $ G' \subseteq G $, and by $ X $ the left coset space $ G/G' $, in which we write $ \bar{x} $ for the coset  $ xG' $.

We then obtain the following generalization of the result of Stefanello and Trappeniers \cite{ST22}:

\begin{theorem} \label{thm_Hopf_Galois_structures}
There are bijections between
\begin{enumerate}
\item binary operations $ \star $ on $ X $ such that $ (G,\cdot,X,\star,\odot) $ is a skew bracoid, where $ \odot $ denotes left translation of cosets;
\item$ G $-stable regular subgroups of $ \Perm(X) $;
\item Hopf-Galois structures on $ L/K $. 
\end{enumerate}
\end{theorem}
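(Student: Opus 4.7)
The plan is to first invoke the Greither-Pareigis theorem in the extended form discussed immediately before the theorem statement; this supplies the bijection between (2) and (3). The substantive task is therefore to establish the bijection between (1) and (2).

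For this I would adapt the approach of Stefanello and Trappeniers. Given any binary operation $\star$ on $X$ such that $(X, \star)$ is a group, define $\rho_\star : X \to \Perm(X)$ by $\rho_\star(\bar{x})[\bar{y}] = \bar{y} \star \bar{x}^{-1}$, so that $\rho_\star(X)$ is a regular subgroup of $\Perm(X)$. It is a standard fact that $\star \mapsto \rho_\star(X)$ is a bijection between group operations on $X$ and regular subgroups of $\Perm(X)$: the inverse sends a regular subgroup $A$ to the unique group operation on $X$ making the evaluation map $A \to X$, $\alpha \mapsto \alpha[\bar{e}]$, a group isomorphism (where $\bar{e} = \overline{e_G}$).

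The heart of the proof is to show that under this bijection $(G,\cdot,X,\star,\odot)$ is a skew bracoid if and only if $\rho_\star(X)$ is $G$-stable under the action of $G$ on $\Perm(X)$ by conjugation via $\lambda_\odot$. If \eqref{eqn_SBO_relation} holds, a direct calculation using Definition \ref{defn_gamma_function} and Proposition \ref{prop_useful_identites} gives
\[
\lambda_\odot(g) \, \rho_\star(\bar{x}) \, \lambda_\odot(g)^{-1} = \rho_\star(\,^{\gamma(g)} \bar{x})
\]
for all $g \in G$ and $\bar{x} \in X$, establishing $G$-stability. Conversely, if $\rho_\star(X)$ is $G$-stable, then for each $g \in G$ conjugation by $\lambda_\odot(g)$ preserves $\rho_\star(X)$ and hence induces, via $\rho_\star$, an automorphism of $(X,\star)$ which plays the role of $\gamma(g)$; evaluating the analogous displayed identity at $\bar{y} = g \odot \bar{a}$ for arbitrary $\bar{a} \in X$ (using transitivity of $\odot$) and rearranging recovers \eqref{eqn_SBO_relation}.

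The main obstacle is the bookkeeping of inverses in the converse direction, where the $\gamma$-function must be extracted from the $G$-stability data rather than taken as given. This is cleanest to handle by first computing $(\gamma(g)\bar{x})^{-1}$ from the special case $\bar{a} = e_X$, then substituting back into the general identity to pick up the missing factor of $(g \odot e_X)^{-1}$ on the right-hand side. Composing the bijection between (1) and (2) with the Greither-Pareigis bijection between (2) and (3) then yields the full chain of bijections claimed in the theorem.
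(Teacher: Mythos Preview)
Your approach is essentially the same as the paper's: invoke Greither--Pareigis for (ii)$\leftrightarrow$(iii), then establish (i)$\leftrightarrow$(ii) by sending $\star$ to $\rho_\star(X)$ and verifying that $G$-stability is equivalent to the skew bracoid relation via the conjugation identity $\lambda_\odot(g)\rho_\star(\bar{x})\lambda_\odot(g)^{-1}=\rho_\star(\,^{\gamma(g)}\bar{x})$. Your converse argument (extracting an automorphism of $(X,\star)$ from $G$-stability, evaluating at $\bar{y}=g\odot\bar{e}$ to identify it with $\gamma(g)$, then at $\bar{y}=g\odot\bar{a}$ to recover \eqref{eqn_SBO_relation}) is a valid reorganization of the paper's computation with $\theta_g$ and $\kappa$.

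There is one slip worth fixing. You describe the inverse of $\star\mapsto\rho_\star(X)$ as taking a regular subgroup $A$ to the operation for which the evaluation map $\alpha\mapsto\alpha[\bar{e}]$ is a group isomorphism. But for $A=\rho_\star(X)$ this evaluation sends $\rho_\star(\bar{x})$ to $\bar{e}\star\bar{x}^{-1}=\bar{x}^{-1}$, which is an \emph{anti}-isomorphism onto $(X,\star)$; your stated inverse would therefore return $\star^{op}$ rather than $\star$, and the maps would fail to be mutually inverse. The paper is careful here: it defines the bijection $a:N\to X$ by $a(\eta)=\eta^{-1}[\bar{e}]$ (note the inverse), and explicitly remarks afterward that this choice, rather than the naive $\eta\mapsto\eta[\bar{e}]$, is what makes $a$ an isomorphism from $N$ to $(X,\star)$ and hence makes the two constructions mutually inverse. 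You should make the same adjustment.
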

\begin{proof}
The fact that there is a bijection between the objects given in (ii) and (iii) is the content of the theorem of Greither and Pareigis, combined with the discussion above.

Now suppose that $ \star $ is a binary operation on $ X $ such that $ (G,\cdot,X,\star,\odot) $ is a skew bracoid. Let $ \rho_{\star} : X \rightarrow \Perm(X) $ denote the right regular representation of the group $ (X,\star) $, so that $ \rho_{\star}(\bar{x})[\bar{y}] = \bar{y} \star \bar{x}^{-1} $ for all $ \bar{x}, \bar{y} \in X $. Then $ \rho_{\star}(X) $ is clearly a regular subgroup of $ \Perm(X) $. To show that it is $ G $-stable, let $ g \in G $ and $ \bar{x}, \bar{y} \in X $. Then
\begin{eqnarray*}
\lambda_{\odot}(g) \rho_{\star}(\bar{x}) \lambda_{\odot}(g^{-1})[\bar{y}] & = & g \odot ( (g^{-1} \odot \bar{y}) \star \bar{x}^{-1} ) \\
& = & (g \odot (g^{-1} \odot \bar{y}) \star (g \odot \bar{e})^{-1} \star (g \odot \bar{x}^{-1} ) \mbox{ by Equation \ref{eqn_SBO_relation} } \\
& = &  \bar{y} \star (g \odot \bar{x} )^{-1} \star (g \odot \bar{e}) \mbox{ by Proposition \ref{prop_useful_identites} } \\
& = &  \rho_{\star}( (g \odot \bar{e})^{-1} \star (g \odot \bar{x}) ) [\bar{y}] \\
& = & \rho_{\star}( \,^{\gamma(g)} \bar{x} ) [\bar{y}].
\end{eqnarray*} 
Thus $ \rho_{\star}(X) $ is $ G $-stable; in fact, we have shown that 
\begin{equation} \label{eqn_GP_action_on_rho_X}
\lambda_{\odot}(g) \rho_{\star}(\bar{x}) \lambda_{\odot}(g^{-1}) = \rho_{\star}( \,^{\gamma(g)} \bar{x} )
\end{equation}
for all $ g \in G $ and $ \bar{x} \in X $. Thus we obtain a $ G $-stable regular subgroup of $ \Perm(X) $. 

Next suppose that $ N $ is a $ G $-stable regular subgroup of $ \Perm(X) $. Then the map $ a: N \rightarrow X $ defined by $ a(\eta) = \eta^{-1}[\bar{e}] $ is a bijection. Using this, we define a binary operation $ \star $ on $ X $ by the rule
\[ a(\eta) \star a(\mu) = a(\eta \mu). \] 
Then $ (X,\star) $ is a group isomorphic to $ N $. It is clear that the action of $ (G,\cdot) $ on $ X $ by left translation of cosets is transitive, so in order to show that $ (G,\cdot,X,\star,\odot) $ forms a skew bracoid it only remains to show that the skew bracoid relation \eqref{eqn_SBO_relation} is satisfied. To do this, note first that the assumption that $ N $ is $ G $-stable implies that for each $ g \in G $ the map $ \theta_{g} : N \rightarrow N $ defined by $ \theta_{g}(\eta) = \lambda_{\odot}(g) \eta \lambda_{\odot}(g^{-1}) $ is an automorphism of $ N $. For $ \eta \in N $ we have
\begin{eqnarray*}
g \odot a(\eta)  & = & g \odot \eta^{-1}[\bar{e}] \\
& = & \theta_{g}(\eta^{-1})\kappa^{-1} [ \bar{e} ], \mbox{ where $ \kappa \in N $ satisfies $ \kappa^{-1}[\bar{e}] = \bar{g} $ } \\
& = & a(\kappa \theta_{g}(\eta)).
\end{eqnarray*}
Therefore for $ \eta, \mu \in N $ we have
\begin{eqnarray*}
g \odot ( a(\eta) \star a(\mu)) & = & g \odot a(\eta\mu) \\
& = & a(\kappa \theta_{g}(\eta\mu)) \\
& = & a(\kappa \theta_{g}(\eta) \theta_{g}(\mu)) \\
& = & a(\kappa \theta_{g}(\eta) \kappa^{-1} \kappa \theta_{g}(\mu)) \\
& = & a(\kappa \theta_{g}(\eta)) \star a(\kappa)^{-1} \star a(\kappa \theta_{g}(\mu)) \\
& = & (g \odot a(\eta)) \star (g \odot e_{N})^{-1} \star (g \odot a(\mu)). 
\end{eqnarray*}
Therefore the skew bracoid relation \eqref{eqn_SBO_relation} holds, and so $ (G,\cdot,X,\star,\odot) $ forms a skew bracoid. 

The processes described above are mutually inverse: if $ N $ is a $ G $-stable regular subgroup of $ \Perm(X) $ and $ \star $ is the corresponding binary operation on $ X $ then for all $ \eta, \mu \in N $ we have
\begin{eqnarray*}
\rho_{\star}(a(\mu))[a(\eta)] & = & a(\eta) \star a(\mu)^{-1} \\
& = & a(\eta\mu^{-1}) \\
& = & \mu \eta^{-1} [ \bar{e} ] \\
& = & \mu [ a(\eta) ],
\end{eqnarray*}
so $ \rho_{\star}(X) = N \subseteq \Perm(X) $. Similarly, if $ \star $ is a binary operation on $ X $ such that $ (G,\cdot,X,\star,\odot) $ is a skew bracoid and $ N = \rho_{\star}(X) $ is the corresponding $ G $-stable regular subgroup of $ \Perm(X) $ then the bijection $ a : N \rightarrow X $ is given by
\[ a( \rho_{\star}(\bar{x})) = \rho_{\star}^{-1}(\bar{x})[\bar{e}] = \bar{x}, \]
and the binary operation $ \hat{\star} $ on $ X $ arising from $ N $ is given by
\[ \bar{x} \; \hat{\star} \; \bar{y} =  a(\rho_{\star}(\bar{x})) \; \hat{\star} \; a(\rho_{\star}(\bar{y})) =  a(\rho_{\star}(\bar{x} \star \bar{y})) = \bar{x} \star \bar{y}. \]
Hence $ \hat{\star} = \star $. 

This completes the proof that there is a bijection between the objects described in (ii) and those in (iii).
\end{proof}

We note that the bijection $ a $ defined in the proof of Theorem \ref{thm_Hopf_Galois_structures} is not a direct generalization of the bijection used in the corresponding step of the argument connecting skew braces with Hopf-Galois structures on Galois field extensions (see \cite[Subsection 2.2]{KT20}, for example). The direct generalization would be $ a(\eta) = \eta[\bar{e}] $; we choose $ a(\eta) = \eta^{-1}[\bar{e}] $ over this since it yields an isomorphism between $ (X,\star) $ and $ \rho_{\star}(X) $. 

If, instead of beginning with a Galois extension $ E $ of $ K $ and an intermediate field $ L $ we begin with a separable extension $ L/K $, then by Theorem \ref{thm_Hopf_Galois_structures} each Hopf-Galois structure on $ L/K $ yields multiple skew bracoids, corresponding to different choices of the (finite) Galois extension $ E $ of $ K $ containing $ L $. However, we have

\begin{proposition}
Let $ \star $ be a binary operation on $ X $ such that $ (G,\cdot,X,\star,\odot) $ is a skew bracoid. Then $ \ker(\lambda_{\odot}) = \Gal(E/\widetilde{L}) $, so the reduced form of $ (G,X) $ is $ (\bar{G}, X) $ with $ \bar{G} = \Gal(\widetilde{L}/K) $. 
\end{proposition}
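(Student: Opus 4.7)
The plan is to observe that the conclusion is a purely group-theoretic / Galois-theoretic fact about the action $\odot$: the binary operation $\star$ on $X$ plays no role, since $\lambda_{\odot}$ is determined entirely by the left translation action of $G$ on the coset space $X = G/G'$.

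First I would identify the kernel of $\lambda_{\odot}$ as an intersection of stabilizers. An element $g \in G$ lies in $\ker(\lambda_{\odot})$ if and only if $g \odot \bar{x} = \bar{x}$ for every $\bar{x} \in X$, that is, $gxG' = xG'$ for every $x \in G$. This is equivalent to $x^{-1}gx \in G'$ for every $x \in G$, so
\[
\ker(\lambda_{\odot}) \;=\; \bigcap_{x \in G} x G' x^{-1},
\]
the normal core of $G'$ in $G$. In particular this is independent of the choice of $\star$.

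Next I would invoke the Galois correspondence. Since $G' = \Gal(E/L)$, the conjugate $xG'x^{-1}$ equals $\Gal(E/x(L))$ where $x(L)$ is the image of $L$ under the element $x \in G = \Gal(E/K)$. Thus
\[
\bigcap_{x \in G} x G' x^{-1} \;=\; \bigcap_{x \in G} \Gal(E/x(L)) \;=\; \Gal\!\left( E \,\Big/\, \textstyle\prod_{x \in G} x(L) \right),
\]
and the compositum $\prod_{x \in G} x(L)$ of all $K$-conjugates of $L$ inside $E$ is precisely the Galois closure $\widetilde{L}$ of $L/K$. Hence $\ker(\lambda_{\odot}) = \Gal(E/\widetilde{L})$.

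Finally I would apply Proposition \ref{prop_reduced_form}: the reduced form of $(G,X)$ is $(\bar{G},X)$ with $\bar{G} = G/\ker(\lambda_{\odot})$, and by the fundamental theorem of Galois theory $G/\Gal(E/\widetilde{L}) \cong \Gal(\widetilde{L}/K)$, giving the claim. The only subtle point is verifying that the intersection of the $G$-conjugates of $G'$ corresponds via the Galois correspondence to the compositum of the $G$-conjugates of $L$; this is a standard fact, and no genuine obstacle arises.
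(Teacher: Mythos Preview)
Your proof is correct and follows essentially the same route as the paper: both compute $\ker(\lambda_{\odot})$ as the normal core $\bigcap_{x\in G} xG'x^{-1}$ of $G'$ in $G$, identify this via the Galois correspondence with $\Gal(E/\widetilde{L})$, and then invoke Proposition~\ref{prop_reduced_form}. The only cosmetic difference is that you spell out the Galois step via conjugates $xG'x^{-1}=\Gal(E/x(L))$ and their compositum, whereas the paper phrases it as ``largest normal subgroup contained in $G'$ $\leftrightarrow$ smallest Galois subextension containing $L$''; these are two wordings of the same standard fact.
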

\begin{proof}
Recall that the action $ \odot $ on $ G $ on $ X $ is by left translation of cosets. We have $ \Stab_{G}(\bar{e}) = G' $, so for each $ x \in G $ we have $ \Stab_{G}(\bar{x}) = xG'x^{-1} $, and so $ \ker(\lambda_{\odot}) = \bigcap_{x \in G} xG'x^{-1} $, the largest normal subgroup of $ G $ contained in $ G' $. By Galois theory, the fixed field of this subgroup is the smallest subfield of $ E $ that contains $ L $ and is a Galois extension of $ K $; that is, the Galois closure of $ L/K $. Thus $ \ker(\lambda_{\odot}) = \Gal(E/\widetilde{L}) $. The remaining statement follows by the definition of the reduced form of $ (G,X) $ (Proposition \ref{prop_reduced_form}). 
\end{proof}

We obtain immediately

\begin{corollary}
The skew bracoid $ (G,X) $ is reduced if and only if $ E $ is the Galois closure of $ L/K $. 
\end{corollary}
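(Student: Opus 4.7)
The proof is essentially an immediate translation of the preceding proposition into the language of Galois theory, so the plan is quite short.

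First, I would unpack the definition: by Definition \ref{defn_reduced}, the skew bracoid $(G,X)$ is reduced precisely when $\lambda_{\odot}$ is injective, equivalently when $\ker(\lambda_{\odot}) = \{e_G\}$. Since $G = \Gal(E/K)$, this happens if and only if the subgroup $\Gal(E/\widetilde{L})$ is trivial.

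Second, I would invoke the preceding proposition, which identifies $\ker(\lambda_{\odot})$ with $\Gal(E/\widetilde{L})$. Combining this with the observation above, $(G,X)$ is reduced if and only if $\Gal(E/\widetilde{L}) = \{e_G\}$.

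Finally, by the Galois correspondence applied to the Galois extension $E/K$, the subgroup $\Gal(E/\widetilde{L})$ is trivial if and only if $\widetilde{L} = E$, i.e.\ if and only if $E$ coincides with the Galois closure of $L/K$. This gives both directions of the equivalence. There is no real obstacle here; the work has already been done in the preceding proposition, and this corollary simply repackages that identification via the Galois correspondence.
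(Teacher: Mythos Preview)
Your proposal is correct and matches the paper's approach exactly: the paper states the corollary immediately after the proposition identifying $\ker(\lambda_{\odot})$ with $\Gal(E/\widetilde{L})$ and gives no proof beyond ``We obtain immediately'', and your argument is precisely the intended unpacking via Definition \ref{defn_reduced} and the Galois correspondence.
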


In Greither and Pareigis's original paper \cite{GP87} they show that if $ N $ is a $ G $-stable regular subgroup of $ \Perm(X) $ then so too is $ N^{op} = \mathrm{Cent}_{\tiny \Perm(G)}(N) $. The interactions between the Hopf-Galois structures corresponding to $ N $ and $ N^{op} $ have been explored in, for example, \cite{Tr18b} and \cite{KT20}. In particular, \cite[Proposition 3.4]{KT20} shows that the skew brace corresponding to the subgroup $ N^{op} $ is the opposite of the skew brace corresponding to $ N $. This result generalizes to skew bracoids:

\begin{proposition}
Let $ \star $ be a binary operation on $ X $ such that $ (G,\cdot,X,\star,\odot) $ is a skew bracoid, and let $ (G,\cdot,X,\star^{op},\odot) $ be the opposite skew bracoid. Then $ \rho_{\star^{op}}(X) = \rho_{\star}(X)^{op} $.
\end{proposition}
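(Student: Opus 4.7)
The plan is to factor the statement through the left regular representation of $(X,\star)$. I would first show that $\rho_{\star^{op}}(X)$ equals the left regular representation $\lambda_\star(X) \subseteq \Perm(X)$, and then invoke the classical fact that the centralizer of the right regular representation of a group in $\Perm(X)$ is its left regular representation to conclude $\lambda_\star(X) = \rho_\star(X)^{op}$.

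For the first step, I note that the identity and inverses in $(X,\star^{op})$ coincide with those in $(X,\star)$, which is immediate from the definition $a \star^{op} b = b \star a$. A direct computation then gives, for $\bar{x}, \bar{y} \in X$,
\[ \rho_{\star^{op}}(\bar{x})[\bar{y}] = \bar{y} \star^{op} \bar{x}^{-1} = \bar{x}^{-1} \star \bar{y} = \lambda_\star(\bar{x}^{-1})[\bar{y}], \]
so $\rho_{\star^{op}}(\bar{x}) = \lambda_\star(\bar{x}^{-1})$ for every $\bar{x} \in X$. Letting $\bar{x}$ range over $X$ shows that $\rho_{\star^{op}}(X) = \lambda_\star(X)$ as subgroups of $\Perm(X)$.

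For the second step, I would invoke the standard fact that $\Cent_{\Perm(X)}(\rho_\star(X)) = \lambda_\star(X)$: associativity of $\star$ shows that every element of $\lambda_\star(X)$ commutes with every element of $\rho_\star(X)$, and equality follows on counting, since both sides are regular subgroups of $\Perm(X)$ of order $|X|$. Combining the two steps yields
\[ \rho_{\star^{op}}(X) = \lambda_\star(X) = \Cent_{\Perm(X)}(\rho_\star(X)) = \rho_\star(X)^{op}, \]
as required. The argument is essentially routine once the two ingredients are in place; the only place where any care is needed is in tracking inverses through the definition of $\star^{op}$, and no appeal to the skew bracoid relation or to $G$-stability is required for the equality itself (these ensure, from the previous results, that $\rho_\star(X)$ and $\rho_{\star^{op}}(X)$ are $G$-stable regular subgroups, but they do not enter the verification of the set-theoretic equality).
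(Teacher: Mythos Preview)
Your proof is correct and is essentially the same as the paper's: the paper verifies directly that each $\rho_{\star^{op}}(\bar{x})$ commutes with each $\rho_{\star}(\bar{y})$ and then concludes by comparing orders, while you first identify $\rho_{\star^{op}}(X)$ with $\lambda_{\star}(X)$ and then invoke the classical equality $\lambda_{\star}(X)=\Cent_{\Perm(X)}(\rho_{\star}(X))$, which packages the same commutation-plus-cardinality argument. The only difference is that you name the intermediate object $\lambda_{\star}(X)$ explicitly.
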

\begin{proof}
For $ \bar{x}, \bar{y}, \bar{z} \in X $ we have
\begin{eqnarray*}
\rho_{\star^{op}}(\bar{x}) \rho_{\star}(\bar{y}) [\bar{z}] & = & \rho_{\star^{op}}(\bar{x}) [\bar{z} \star \bar{y}^{-1}] \\
& = & \bar{x}^{-1} \star \bar{z} \star \bar{y}^{-1} \\
& = & \rho_{\star}(\bar{y}) [\bar{x}^{-1} \star \bar{z}] \\
& = & \rho_{\star}(\bar{y}) \rho_{\star^{op}}(\bar{x}) [\bar{z}].
\end{eqnarray*}
Hence $ \rho_{\star^{op}}(X) \subseteq \rho_{\star}(X)^{op} $. But these groups have equal order, so in fact they are equal. 
\end{proof}

The connection between skew braces and Hopf-Galois structures on Galois extensions $ L/K $ employed in (for example) \cite{NZ19}, \cite{KT20}, \cite{KST20}, and \cite{KT22} is expressed in terms of \textit{isomorphism classes} of skew braces; in this case multiple Hopf-Galois structures on $ L/K $ can yield a single isomorphism class of skew braces. This is precisely quantified in \cite[Proposition 2.1]{NZ19} and \cite[Proposition 3.1]{KT22}: two $ J $-stable regular subgroups $ N, N' $ of $ \Perm(J) $ yield isomorphic skew braces if and only if $ N' = \varphi N \varphi^{-1} $ for some $ \varphi \in \Aut(J) $. This result generalizes to skew bracoids:

\begin{proposition}\label{prop_isomorphic_braciods_conjugate_in_permx}
Let $ N, N' $ be $ G $-stable regular subgroups of $ \Perm(X) $. The corresponding skew bracoids $ (G,\cdot,X,\star,\odot) $ and $ (G,\cdot,X,\star',\odot) $ are isomorphic if and only if there exists $ \varphi \in \Aut(G) $ such that $ \varphi(G')=G' $ and $ N' = \varphi_{X} N \varphi_{X}^{-1} $. 
\end{proposition}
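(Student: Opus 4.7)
My plan is to use the explicit correspondence from Theorem \ref{thm_Hopf_Galois_structures}, under which $ N = \rho_{\star}(X) $ and $ N' = \rho_{\star'}(X) $, in order to translate skew bracoid isomorphism into a conjugation statement inside $ \Perm(X) $. The setup is symmetric enough that most of the work is in one direction only.

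For the forward direction, I would start with an isomorphism $ \boldvarphi = (\varphi, \varphi_{X}) : (G,\cdot,X,\star,\odot) \to (G,\cdot,X,\star',\odot) $. Since $ \varphi_{X} $ is a group isomorphism $ (X,\star) \to (X,\star') $ and $ \bar{e} $ is the identity of both groups (by construction in the proof of Theorem \ref{thm_Hopf_Galois_structures}), $ \varphi_{X}(\bar{e}) = \bar{e} $. Proposition \ref{prop_isomorphism_stabilizers} then yields $ G' = \Stab_{G}(\bar{e}) = \varphi(\Stab_{G}(\bar{e})) = \varphi(G') $, and in particular $ \varphi_{X} $ is the induced coset map $ gG' \mapsto \varphi(g)G' $. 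I would then verify the identity $ \varphi_{X} \rho_{\star}(\bar{x}) \varphi_{X}^{-1} = \rho_{\star'}(\varphi_{X}(\bar{x})) $ for each $ \bar{x} \in X $ by a one-line computation using the homomorphism property of $ \varphi_{X} $, and conclude that $ \varphi_{X} N \varphi_{X}^{-1} = \rho_{\star'}(X) = N' $.

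For the converse, suppose $ \varphi \in \Aut(G) $ satisfies $ \varphi(G') = G' $ and $ \varphi_{X} N \varphi_{X}^{-1} = N' $, with $ \varphi_{X} : X \to X $ the induced bijection $ gG' \mapsto \varphi(g)G' $. By Proposition \ref{prop_homomorphism}, it suffices to verify that $ \varphi_{X} $ is a group homomorphism $ (X,\star) \to (X,\star') $; bijectivity of both maps will then promote the pair to an isomorphism of skew bracoids. Since $ \rho_{\star}, \rho_{\star'} $ are group isomorphisms onto $ N $ and $ N' $ respectively, and conjugation by $ \varphi_{X} $ gives a group isomorphism $ N \to N' $ by hypothesis, there is a canonical group isomorphism $ \psi : (X,\star) \to (X,\star') $ uniquely determined by $ \varphi_{X} \rho_{\star}(\bar{x}) \varphi_{X}^{-1} = \rho_{\star'}(\psi(\bar{x})) $. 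The decisive step is then the identification $ \psi = \varphi_{X} $: evaluating both sides at $ \bar{e} $ (using $ \varphi_{X}(\bar{e}) = \bar{e} $, which follows from $ \varphi(G') = G' $) gives $ \varphi_{X}(\bar{x}^{-1}) = \psi(\bar{x})^{-1} $ for all $ \bar{x} \in X $, and replacing $ \bar{x} $ by its inverse and using that $ \psi $ respects inverses yields $ \varphi_{X} = \psi $.

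The main subtle point is this final identification $ \varphi_{X} = \psi $ in the converse direction: one must show that the set-theoretic bijection on $ X $ coming from the group automorphism $ \varphi $ coincides with the group isomorphism extracted from the conjugation relation on $ N $ and $ N' $. Everything else reduces to a direct application of Proposition \ref{prop_isomorphism_stabilizers} or Proposition \ref{prop_homomorphism}, together with routine computations with the right regular representations $ \rho_{\star} $ and $ \rho_{\star'} $.
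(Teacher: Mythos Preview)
Your proposal is correct and follows essentially the same approach as the paper. The forward direction is identical; for the converse, the paper works directly with the bijections $a:N\to X$ and $a':N'\to X$ from Theorem~\ref{thm_Hopf_Galois_structures} to show $a'(\varphi_X\eta\varphi_X^{-1})=\varphi_X a(\eta)$, while you name the composite $(\rho_{\star'})^{-1}\circ(\text{conj.\ by }\varphi_X)\circ\rho_\star$ as $\psi$ and then identify $\psi=\varphi_X$ by evaluating at $\bar{e}$---but since $\rho_\star=a^{-1}$ and $\rho_{\star'}=(a')^{-1}$, this is literally the same computation in different notation.
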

\begin{proof}
First suppose that $ \boldvarphi : (G,\cdot,X,\star,\odot) \rightarrow (G,\cdot,X,\star',\odot) $ is an isomorphism. Then $ \varphi \in \Aut(G) $, and $ \varphi_{X} : (X,\star) \rightarrow (X,\star') $ is an isomorphism. By Proposition \ref{prop_isomorphism_stabilizers} we have $ \varphi(\Stab_{G}(\bar{e})) = \Stab_{G}(\bar{e}) $, i.e. $ \varphi(G')=G' $. For $ \bar{x}, \bar{y} \in X $ we have
\begin{eqnarray*}
\rho_{\star'}(\bar{x})[\bar{y}] & = & \bar{y} \star' \bar{x}^{-1} \\
& = & \varphi_{X} \left( \varphi_{X}^{-1}(\bar{y}) \star \varphi_{X}^{-1}(\bar{x})^{-1} \right) \\
& = & \varphi_{X} \rho_{\star}(\varphi_{X}^{-1}(\bar{x})) [ \varphi_{X}^{-1}(\bar{y}) ] \\
& = & \varphi_{X} \rho_{\star}(\bar{z}) [ \varphi_{X}^{-1}(\bar{y}) ] 
\end{eqnarray*}
with $ \bar{z} = \varphi_{X}^{-1}(\bar{x}) \in X $. Thus $ \rho_{\star'}(X) = \varphi_{X} \rho_{\star}(X) \varphi_{X}^{-1} $.

Conversely suppose that there exists $ \varphi \in \Aut(G) $ such that $ \varphi(G')=G' $ and $ N' = \varphi_{X} N \varphi_{X}^{-1} $. Recall from Theorem \ref{thm_Hopf_Galois_structures} that the binary operation $ \star $ on $ X $ is defined by
\[ a(\eta) \star a(\mu) = a(\eta\mu), \]
where $ a : N \rightarrow X $ is the bijection defined by $ a(\eta) = \eta^{-1}[\bar{e}] $. The corresponding bijection $ a' : N' \rightarrow X $ is given by
\begin{eqnarray*}
a'(\varphi_{X} \eta \varphi_{X}^{-1}) & = & \varphi_{X} \eta^{-1} \varphi_{X}^{-1} [\bar{e}]  \\
& = & \varphi_{X} \eta^{-1} [\bar{e}] \\
& = & \varphi_{X} a(\eta),
\end{eqnarray*}
and the binary operation $ \star' $ on $ X $ is given by
\[ a'(\varphi_{X} \eta \varphi_{X}^{-1}) \star' a'(\varphi_{X} \mu \varphi_{X}^{-1}) = a'(\varphi_{X} \eta \mu \varphi_{X}^{-1}). \]
Rewriting in terms of $ a $ we find
\begin{eqnarray*}
\varphi_{X} a(\eta) \star' \varphi_{X} a(\mu) & = & \varphi_{X} a(\eta\mu) \\
& = & \varphi_{X}( a(\eta) \star a(\mu) ),
\end{eqnarray*}
and so $ \varphi_{X} : (X, \star) \rightarrow (X,\star') $ is a homomorphism. Since $ \varphi $ is an automorphism of $ G $ that stabilizes $ G' $ we see that $ \varphi_{X} $ is a bijection, hence an isomorphism. Therefore $ \boldvarphi : (G,\cdot,X,\star,\odot) \rightarrow (G,\cdot,X,\star',\odot) $ is an isomorphism.
\end{proof}

\begin{corollary}
The number of $ G $-stable regular subgroups of $ \Perm(X) $ that arising from an isomorphism class of skew bracoids is
\[ \frac{ | \Aut_{G'}(G) | }{ | \Aut_{G',\star}(G) | }, \]
where $ \Aut_{G'}(G)  = \{ \varphi \in \Aut(G) \mid \varphi(G')=G' \} $ and $ \Aut_{G',\star}(G) $ is the subgroup of $  \Aut_{G'}(G) $ consisting of those automorphisms $ \varphi $ for which $ \varphi_{X} \in \Aut(X,\star) $. 
\end{corollary}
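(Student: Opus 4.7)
The plan is to apply the orbit–stabilizer theorem to a natural action of $\Aut_{G'}(G)$ on the set of $G$-stable regular subgroups of $\Perm(X)$. Since every $\varphi \in \Aut_{G'}(G)$ preserves $G'$, it descends to a bijection $\varphi_X$ of $X = G/G'$, and functoriality gives $(\varphi \psi)_X = \varphi_X \psi_X$. I would define the action by $\varphi \cdot N = \varphi_X N \varphi_X^{-1}$; one should first check that if $N$ is a $G$-stable regular subgroup of $\Perm(X)$, then so is $\varphi_X N \varphi_X^{-1}$. Regularity is automatic because $\varphi_X \in \Perm(X)$, and $G$-stability follows from the observation that $\varphi_X$ normalizes $\lambda_\odot(G)$ (indeed, a direct computation shows $\varphi_X \lambda_\odot(g) \varphi_X^{-1} = \lambda_\odot(\varphi(g))$ for all $g \in G$, using $\varphi(G')=G'$). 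Now Proposition \ref{prop_isomorphic_braciods_conjugate_in_permx} says precisely that the orbit of $N$ under this action is the set of all $G$-stable regular subgroups of $\Perm(X)$ that yield skew bracoids isomorphic to the one determined by $N$; the orbit–stabilizer theorem then gives the cardinality of this orbit as $|\Aut_{G'}(G)|/|\mathrm{Stab}(N)|$.

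The remaining (and main) task is to show that $\mathrm{Stab}(N) = \Aut_{G',\star}(G)$, where $\star$ is the binary operation on $X$ corresponding to $N$ via Theorem \ref{thm_Hopf_Galois_structures}. Suppose first that $\varphi \in \Aut_{G'}(G)$ stabilizes $N$. Then the proof of Proposition \ref{prop_isomorphic_braciods_conjugate_in_permx} yields $\varphi_X(\bar{x} \star \bar{y}) = \varphi_X(\bar{x}) \star' \varphi_X(\bar{y})$, where $\star'$ is the binary operation corresponding to $\varphi_X N \varphi_X^{-1} = N$; but the bijection in Theorem \ref{thm_Hopf_Galois_structures} then forces $\star' = \star$, so $\varphi_X \in \Aut(X,\star)$. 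Conversely, if $\varphi_X \in \Aut(X,\star)$, a short computation shows
\[ \varphi_X \rho_\star(\bar{x}) \varphi_X^{-1}[\bar{y}] = \varphi_X(\varphi_X^{-1}(\bar{y}) \star \bar{x}^{-1}) = \bar{y} \star \varphi_X(\bar{x})^{-1} = \rho_\star(\varphi_X(\bar{x}))[\bar{y}], \]
so $\varphi_X N \varphi_X^{-1} = \varphi_X \rho_\star(X) \varphi_X^{-1} = \rho_\star(\varphi_X(X)) = \rho_\star(X) = N$. Combining these two directions identifies the stabilizer with $\Aut_{G',\star}(G)$, and the formula follows.

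The main obstacle is the first half of the stabilizer computation, where one must carefully invoke the bijection of Theorem \ref{thm_Hopf_Galois_structures} to conclude $\star' = \star$ from $N' = N$; the rest is a routine unwinding of the definitions together with orbit–stabilizer.
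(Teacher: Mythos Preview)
Your proposal is correct and is precisely the argument the paper has in mind: the corollary is stated without proof immediately after Proposition~\ref{prop_isomorphic_braciods_conjugate_in_permx}, and your orbit--stabilizer argument, together with the identification of the stabilizer via the bijection of Theorem~\ref{thm_Hopf_Galois_structures}, is exactly the intended unwinding. The only detail worth tightening is the check that $\varphi_X$ normalizes $\lambda_\odot(G)$ (which you note), since this is what guarantees the action preserves $G$-stability; once that is in place, everything else is routine.
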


Next we turn to the properties of the Hopf-Galois structure corresponding to a $ G $-stable regular subgroup $ N $ of $ \Perm(X) $. 
Greither and Pareigis show that the Hopf algebra giving the corresponding Hopf-Galois structure is $ \widetilde{L}[N]^{J} $, where $ J $ acts on $ \widetilde{L} $ via the Galois action and on $ N $ via $ \,^{g}\eta = \lambda_{\odot}(g) \eta \lambda_{\odot}(g)^{-1} $, and that the action of this Hopf algebra on $ L $ is given by 
\begin{equation}
\left( \sum_{\eta \in N} c_{\eta} \eta \right)[t] = \sum_{\eta \in N} c_{\eta} \eta^{-1}(eJ')[t]. 
\end{equation}

In the case that $ L/K $ is Galois with Galois group $ (J,\cdot) $, Stefanello and Trappeniers reinterpret this by showing that the Hopf algebra giving Hopf-Galois structure corresponding to a binary operation $ \star $ on $ J $ can be written as $ L[J,\star]^{J} $, with $ J $ acting on $ L $ via the Galois action an on $ (J,\star) $ via the $ \gamma $-function of the skew brace, and that the action of this Hopf algebra on $ L $ is given by 
\begin{equation}
\left( \sum_{j \in J} c_{j} j \right)[t] = \sum_{j \in J} c_{j} j[t]. 
\end{equation}

We generalize this formulation to skew bracoids. 

\begin{proposition} \label{prop_Hopf_algebra_from_SBO}
Let $ \star $ be a binary operation on $ X $ such that $ (G,\cdot,X,\star,\odot) $ is a skew bracoid. The Hopf algebra giving the corresponding Hopf-Galois structure on $ L/K $ is $ E[X,\star]^{G} $, where $ G $ acts on $ E $ as Galois automorphisms and on $ (X,\star) $ via the $ \gamma $-function of the skew bracoid. The action of this $ K $-Hopf algebra on $ L $ is given by
\begin{equation} \label{eqn_Hopf_algebra_action_skew_bracoid}
\left( \sum_{ \bar{x} \in X } c_{\bar{x}} \bar{x} \right)[t] = \sum_{ \bar{x} \in X } c_{\bar{x}} \bar{x}(t) \mbox{ for all } t \in L.
\end{equation}
\end{proposition}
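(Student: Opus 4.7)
The plan is to pull back the Greither--Pareigis Hopf algebra along the group isomorphism $a : N \to (X,\star)$ constructed in the proof of Theorem \ref{thm_Hopf_Galois_structures}, where $N = \rho_\star(X)$ is the $G$-stable regular subgroup corresponding to $\star$. First I would pass from the field $\widetilde{L}$ and group $J = \Gal(\widetilde{L}/K)$ to $E$ and $G$. Writing $\widetilde{G} = \Gal(E/\widetilde{L})$, the preceding proposition identifies $\widetilde{G}$ with $\ker(\lambda_\odot)$, so $\widetilde{G}$ acts trivially on $N$ by conjugation. Taking $\widetilde{G}$-invariants first then yields $E[N]^G = \widetilde{L}[N]^J$, and the Greither--Pareigis action on $L$ becomes $(\sum c_\eta \eta)[t] = \sum c_\eta \eta^{-1}[\bar{e}][t]$, where the coset $\eta^{-1}[\bar{e}] \in G/G'$ acts on $t \in L$ through any of its lifts (well-defined since $G' = \Gal(E/L)$ fixes $L$ pointwise).

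Next, since $a$ is a group isomorphism $N \to (X,\star)$ by construction, it extends uniquely to an $E$-Hopf algebra isomorphism $\Phi : E[N] \to E[X,\star]$ that is the identity on coefficients. The crucial point is that $\Phi$ intertwines the two declared $G$-actions: on $E[N]$, $G$ acts by Galois on $E$ and by $\lambda_\odot$-conjugation on $N$; on $E[X,\star]$, $G$ acts by Galois on $E$ and by $\gamma$ on $X$. For $\eta = \rho_\star(\bar{x})$, equation \eqref{eqn_GP_action_on_rho_X} already established in the proof of Theorem \ref{thm_Hopf_Galois_structures} gives
\[ \lambda_\odot(g)\rho_\star(\bar{x})\lambda_\odot(g^{-1}) = \rho_\star(\,^{\gamma(g)}\bar{x}), \]
and since $a(\rho_\star(\bar{y})) = \bar{y}$ for every $\bar{y} \in X$, applying $\Phi$ to both sides shows $\Phi(\,^g \eta) = \,^g \Phi(\eta)$. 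Taking $G$-invariants therefore yields a $K$-Hopf algebra isomorphism $E[N]^G \cong E[X,\star]^G$, which identifies the Hopf algebra giving the Hopf-Galois structure on $L/K$ as $E[X,\star]^G$.

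Finally, to derive the action formula \eqref{eqn_Hopf_algebra_action_skew_bracoid}, I would transport the Greither--Pareigis formula through $\Phi^{-1}$. Writing $\eta = \Phi^{-1}(\bar{x}) = a^{-1}(\bar{x})$, the Greither--Pareigis action evaluates $\eta$ at $t \in L$ via $\eta^{-1}[\bar{e}] = a(\eta) = \bar{x}$, which acts on $t$ as any lift of $\bar{x} \in G/G'$; this is precisely $\bar{x}(t)$ in the notation of the proposition. The main obstacle is verifying the $G$-equivariance of $\Phi$, but this reduces cleanly to equation \eqref{eqn_GP_action_on_rho_X}; everything else is bookkeeping between two equivalent parametrizations of the same object.
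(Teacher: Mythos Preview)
Your proposal is correct and follows essentially the same route as the paper: identify $N=\rho_\star(X)$, use the group isomorphism between $(X,\star)$ and $N$ to obtain an $E$-Hopf algebra isomorphism, verify $G$-equivariance via equation \eqref{eqn_GP_action_on_rho_X}, descend to $K$ by taking invariants, and transport the Greither--Pareigis action formula. The only cosmetic differences are that the paper uses $\rho_\star$ directly rather than its inverse $a$, and it takes the passage from $\widetilde{L}[N]^{J}$ to $E[N]^{G}$ as already established (from the discussion preceding Theorem \ref{thm_Hopf_Galois_structures}), whereas you spell it out.
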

\begin{proof}
By Theorem \ref{thm_Hopf_Galois_structures} that $ \rho_{\star}(X) $ is $ G $-stable regular subgroup of $ \Perm(X) $;  the Hopf algebra giving the corresponding Hopf-Galois structure is $ E[\rho_{\star}(X)]^{G} $. The isomorphism of groups $ \rho_{\star} : (X,\star) \rightarrow \rho_{\star}(X) $ yields an isomorphism of $ E $-Hopf algebras $ E[X,\star] \cong E[\rho_{\star}(X)] $. By Equation \eqref{eqn_GP_action_on_rho_X} we have
\[ \rho_{\star}( \,^{\gamma(g)} \bar{x} ) = \lambda_{\odot}(g) \rho_{\star}(\bar{x}) \lambda_{\odot}(g^{-1}) \]
for all $ g \in G $ and $ \bar{x} \in X $, so this isomorphism is $ G $-equivariant. Therefore by Galois descent we obtain $ E[X,\star]^{G} \cong E[\rho_{\star}(X)]^{G} $ as $ K $-Hopf algebras. 

By the theorem of Greither and Pareigis the the action of the Hopf algebra $ E[\rho_{\star}(X)]^{G} $ on $ L $ is given by
\begin{eqnarray*}
\left( \sum_{ \bar{x} \in X } c_{\bar{x}} \rho_{\star}(\bar{x}) \right)[t] & = & \sum_{ \bar{x} \in X } c_{\bar{x}} \rho_{\star}(\bar{x})^{-1}(\bar{e})[t] \\
& = & \sum_{ \bar{x} \in X } c_{\bar{x}} \bar{x}[t] \mbox{ for all } t \in L. 
\end{eqnarray*}
(Note that the expression $ \bar{x}[t] $ is well defined because $ \bar{x} = xG' $ and $ t \in L = E^{G'} $.) Transporting this to an action of $ E[X,\star]^{G} $ on $ L $ via the inverse of the isomorphism $ E[X,\star]^{G} \cong E[\rho_{\star}(X)]^{G} $, we find
\[ \left( \sum_{ \bar{x} \in X } c_{\bar{x}} \bar{x} \right)[t] = \sum_{ \bar{x} \in X } c_{\bar{x}} \bar{x}(t) \mbox{ for all } t \in L, \]
as claimed.
\end{proof}

The connection between skew braces and Hopf-Galois structures on Galois extensions has been fruitfully applied to questions concerning the \textit{Hopf-Galois correspondence}. If a Hopf algebra $ H $ gives a Hopf-Galois structure on an extension of fields $ L/K $ then each Hopf subalgebra $ H' $ of $ H $ has a corresponding ``fixed field" 
\begin{equation} \label{eqn_fixed_field}
L^{H'} = \{ x \in L \mid h(x) = \varepsilon(h)x \mbox{ for all } h \in H' \}, 
\end{equation}
where $ \varepsilon : H \rightarrow K $ is the counit map of $ H $ (see \cite{CS69} or \cite[Chapter 7]{HAGMT}). The resulting correspondence between Hopf subalgebras of $ H $ and intermediate fields of the extension $ L/K $ is inclusion reversing and injective, but not surjective in general. We say that an intermediate field having the form $ L^{H'} $ for some Hopf subalgebra $ H' $ of $ H $ is \textit{realizable with respect to $ H $}. 

In analogy with the usual Galois correspondence, we always obtain a natural Hopf-Galois structure on the extension $ L/L^{H'} $, given by the $ L^{H'} $-Hopf algebra $ L^{H'} \otimes_{K} H' $. The existence of a quotient Hopf algebra, and a natural quotient Hopf-Galois structure on $ L^{H'}/K $, depend upon whether $ H' $ is a \textit{normal} Hopf subalgebra of $ H $. 

Turning to the Hopf algebras arising in Greither-Pareigis theory, it is well known that the Hopf subalgebras of a group algebra $ E[N] $ are precisely the sets $ E[P] $ with $ P $ a subgroup of $ N $, and that $ E[P] $ is a normal Hopf subalgebra of $ E[N] $ if and only if $ P $ is a normal subgroup of $ N $ (in this case the quotient Hopf algebra identifies naturally with $ E[N/P] $). If $ G $ acts on $ N $ by automorphisms and on $ E $ via the Galois action then by the theory of Galois descent the Hopf subalgebras of $ E[N]^{G} $ are precisely the sets $ E[P]^{G} $ with $ P $ a subgroup of $ N $ that is stable under the action of $ G $, with normality of this Hopf subalgebra again equivalent to normality of $ P $ in $ N $ (in this case we obtain a natural action of $ G $ on $ N/P $, and the quotient Hopf algebra identifies naturally with $ E[N/P]^{G} $). See \cite{ST22} for a recent exposition of these ideas.

In the case that $ L/K $ is Galois with Galois group $ (J,\cdot) $, Childs \cite{Ch18} (using an earlier version of the connection between Hopf-Galois structures and skew braces) characterizes the Hopf subalgebras of a Hopf algebra of the form $ L[N]^{J} $ (and hence the intermediate fields realizable with respect to the corresponding Hopf-Galois structure) in terms of certain substructures of the corresponding skew brace. In the Stefanello-Trappeniers formulation, the Hopf subalgebras of a Hopf algebra of the form $ L[J,\star]^{J} $ correspond to left ideals of $ (J,\star,\cdot) $, with the normal Hopf subalgebras corresponding to ideals. Moreover, they show that if $ I $ is a left ideal of $ (J,\star,\cdot) $ then the fixed field of the Hopf subalgebra $ L[I,\star]^{J} $ coincides with the fixed field $ L^{I} $ obtained by viewing $ I $ as a subgroup of the Galois group. 

We generalize this formulation to skew bracoids. First we note that the left ideals of a skew bracoid of the form $ (G,X) $ have a particular form:

\begin{proposition} \label{prop_ideals_of_G_X}
Let $ Y $ be a left ideal of the skew bracoid $ (G,X) $. Then $ Y = G_{Y}/G' $. 
\end{proposition}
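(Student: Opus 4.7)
The approach I would take is to apply Proposition \ref{prop_left_ideal_sub_SBO} directly, exploiting the fact that in this skew bracoid the action $\odot$ is left translation of cosets. The one preliminary point to pin down is the identity element of the group $(X,\star)$: it must be the trivial coset $\bar{e} = G'$. This is implicit in the construction of $\star$ given in the proof of Theorem \ref{thm_Hopf_Galois_structures}, since the bijection $a : N \rightarrow X$ used there to transport the group structure from the regular subgroup $N$ to $X$ satisfies $a(e_{N}) = e_{N}^{-1}[\bar{e}] = \bar{e}$, and the binary operation $\star$ is defined by $a(\eta) \star a(\mu) = a(\eta\mu)$.

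Granted this, since $Y$ is a subgroup of $(X,\star)$ it contains $\bar{e}$. Next I would invoke the identity
\[ G_{Y} = \{ g \in G \mid g \odot \bar{e} \in Y \}, \]
which is established in the proof of Proposition \ref{prop_left_ideal_sub_SBO}. Because $\odot$ is left translation of cosets, $g \odot \bar{e} = gG'$, so this reduces to
\[ G_{Y} = \{ g \in G \mid gG' \in Y \}. \]
In particular, membership in $G_{Y}$ depends only on the coset $gG'$, so $G_{Y}$ is a union of left cosets of $G'$; and since $\bar{e} \in Y$ we have $G' \subseteq G_{Y}$, so $G_{Y}/G'$ makes sense. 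The displayed characterization then shows that the cosets comprising $G_{Y}$ are exactly the elements of $Y$, giving $G_{Y}/G' = Y$.

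The only step requiring any genuine thought is the identification of the identity element of $(X,\star)$; everything else is a direct unpacking of Proposition \ref{prop_left_ideal_sub_SBO} in the concrete setting of coset spaces, so I do not anticipate any real obstacle.
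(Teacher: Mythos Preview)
Your proof is correct and follows essentially the same route as the paper: both invoke the characterization $G_{Y} = \{ g \in G \mid g \odot \bar{e} \in Y \}$ from Proposition \ref{prop_left_ideal_sub_SBO}, then use that $\odot$ is left translation of cosets to conclude $Y = \{ \bar{g} \mid g \in G_{Y} \} = G_{Y}/G'$. Your extra care in identifying $\bar{e}$ as the $\star$-identity is a welcome clarification, though the paper simply takes this for granted.
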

\begin{proof}
Recall from Proposition \ref{prop_left_ideal_sub_SBO} that 
\begin{eqnarray*}
G_{Y} & = & \{ g \in G \mid g \odot \bar{y} \in Y \mbox{ for all } \bar{y} \in Y \} \\
& = & \{ g \in G \mid g \odot \bar{e} \in Y \},
\end{eqnarray*}
and that $ (G_{Y},Y) $ is a subskew bracoid of $ (G,X) $. In this case the action $ \odot $ of $ G $ on $ X $ is simply left translation of cosets, so $ g \odot \bar{e} = \bar{g} $ for each $ g \in G $, and so we have 
\[ Y = G_{Y} \odot \bar{e} = \{ \bar{g} \mid g \in G_{Y} \} = G_{Y} / G'. \]
\end{proof}


\begin{theorem} \label{thm_HG_correspondence}
Let $ (G,\cdot,X,\star,\odot) $ be a skew bracoid and let $ H = E[X,\star]^{G} $ give the corresponding Hopf-Galois structure on $ L/K $.
\begin{enumerate}
\item There is a bijection between intermediate fields $ F $ of $ L/K $ that are realizable with respect to $ H $  and left ideals $ Y $ of $ (G,X) $.
\item Writing $ L^{Y} $ for the intermediate field corresponding to a left ideal $ Y $, we have $ L^{Y} = E^{G_{Y}} $. 
\item The Hopf-Galois structure given by $ H $ on $ L/K $ yields a quotient Hopf-Galois structure on $ L^{Y} / K $ if and only if $ Y $ is an ideal of $ (G,X) $. 
\item The extension $ L^{Y} / K $ is a Galois extension if and only if $ Y $ is an enhanced left ideal of $ (G,X) $. 
\end{enumerate} 
\end{theorem}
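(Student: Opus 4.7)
The plan is to reduce each part of the theorem to the classical description of Hopf subalgebras of a group algebra, applied via Galois descent, combined with the description of $H$ given in Proposition \ref{prop_Hopf_algebra_from_SBO} and the structural identity $Y = G_{Y}/G'$ provided by Proposition \ref{prop_ideals_of_G_X}. For (i), I would first invoke the descent correspondence summarized just before the theorem: the Hopf subalgebras of $H = E[X,\star]^{G}$ are precisely the $K$-algebras $E[Y,\star]^{G}$ with $Y$ a subgroup of $(X,\star)$ stable under the $G$-action on $(X,\star)$. Since $G$ acts on $(X,\star)$ via the $ \gamma $-function of the skew bracoid, $G$-stability of $Y$ is exactly the condition $\,^{\gamma(G)}Y = Y$ that defines a left ideal of $(G,X)$. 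Composing this bijection with the (always injective) map from Hopf subalgebras to realizable intermediate fields of $L/K$ (see \cite[Chapter 7]{HAGMT}) yields the desired bijection.

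For (ii), starting from the explicit action formula \eqref{eqn_Hopf_algebra_action_skew_bracoid} and the fact that the counit of $E[X,\star]$ sends every $\bar{x}$ to $1$, I would expand a general element of $E[Y,\star]^{G}$ in the $E$-basis $Y$ and use $E$-linearity of both the action and the counit to reduce the defining condition $h(t) = \varepsilon(h)t$ for $t \in L$ to the single family of conditions $\bar{y}(t) = t$ for all $\bar{y} \in Y$. Here I would apply Proposition \ref{prop_ideals_of_G_X}, $Y = G_{Y}/G'$, to translate these into the conditions $g(t) = t$ for all $g \in G_{Y}$; the translation is well defined because $t \in L = E^{G'}$, so the action $\bar{y}(t)$ does not depend on the coset representative. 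Noting that $G' \subseteq G_{Y}$ automatically gives $E^{G_{Y}} \subseteq L$, I conclude $L^{Y} = E^{G_{Y}}$.

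For (iii), I would again invoke Galois descent: $E[Y,\star]^{G}$ is a normal Hopf subalgebra of $H$ if and only if $E[Y,\star]$ is a normal Hopf subalgebra of $E[X,\star]$, which occurs if and only if $(Y,\star)$ is a normal subgroup of $(X,\star)$. Combined with (i), this is precisely the condition that $Y$ is an ideal of $(G,X)$ in the sense of Definition \ref{defn_left_ideal_and_ideal}, and then the quotient Hopf algebra $E[X/Y,\star]^{G}$ endows $L^{Y}/K$ with the natural quotient Hopf-Galois structure. For (iv), using (ii) and the classical Galois correspondence applied to $E/K$, the extension $L^{Y}/K = E^{G_{Y}}/K$ is Galois if and only if $G_{Y}$ is normal in $G$, which by Definition \ref{defn_enhanced_left_ideal} is exactly the condition that $Y$ is an enhanced left ideal of $(G,X)$.

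The main obstacle I anticipate is the fixed-field computation in (ii): the fixed-field definition \eqref{eqn_fixed_field} refers to the $K$-Hopf algebra $H' = E[Y,\star]^{G}$, not to the base-changed $E$-algebra $E[Y,\star]$, so one must argue that checking the condition against the $E$-basis $Y$ of the latter is legitimate. This should follow because $E[Y,\star] = E \otimes_{K} H'$ and the action and counit are $E$-linear, so any $K$-linear condition verified on $H'$ extends $E$-linearly to $E[Y,\star]$. Once this descent step is in hand, the identification $Y = G_{Y}/G'$ from Proposition \ref{prop_ideals_of_G_X} bridges the Hopf-theoretic fixed field to the ordinary Galois-theoretic fixed field, and then parts (iii) and (iv) fall out as corollaries of the standard normality dictionaries on both sides.
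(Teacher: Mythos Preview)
Your proposal is correct and follows essentially the same route as the paper for parts (i), (iii), and (iv): identify Hopf subalgebras of $E[X,\star]^{G}$ with $\gamma$-stable subgroups of $(X,\star)$ via descent, then read off the left ideal, ideal, and enhanced left ideal conditions from Definitions \ref{defn_left_ideal_and_ideal} and \ref{defn_enhanced_left_ideal}.

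The one genuine point of divergence is in (ii). You propose to establish both inclusions $L^{Y} \supseteq E^{G_{Y}}$ and $L^{Y} \subseteq E^{G_{Y}}$ directly, using the descent identification $E \otimes_{K} E[Y,\star]^{G} \cong E[Y,\star]$ to pass from the $K$-linear fixed-field condition on $H'$ to the $E$-linear condition on the basis $Y$, and then invoking $Y = G_{Y}/G'$ from Proposition \ref{prop_ideals_of_G_X}. This is valid: for fixed $t \in L$ the maps $h \mapsto h(t)$ and $h \mapsto \varepsilon(h)t$ are both $E$-linear on $E[Y,\star]$, so agreement on $H'$ forces agreement on $E \cdot H' = E[Y,\star]$, and in particular $\bar{y}(t)=t$ for each $\bar{y}\in Y$. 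The paper, by contrast, only proves the easy inclusion $E^{G_{Y}} \subseteq L^{Y}$ directly and then closes the argument with a degree count,
\[
[L:L^{Y}] = \dim_{K}\bigl(E[Y,\star]^{G}\bigr) = |Y| = \frac{|G_{Y}|}{|G'|} = [L:E^{G_{Y}}],
\]
using the standard Hopf-Galois fact that $[L:L^{H'}] = \dim_{K} H'$. Your descent argument avoids appealing to this dimension formula but uses surjectivity of $E\otimes_{K}H' \to E[Y,\star]$ instead; the paper's argument is a little shorter and sidesteps the obstacle you flagged. Either approach is fine.
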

\begin{proof}
\begin{enumerate}
\item By the discussion above there is a bijection between intermediate fields $ F $ of $ L/K $ in the image of the Hopf-Galois correspondence with respect to $ H $ and Hopf subalgebras of $ H $. By Proposition \ref{prop_Hopf_algebra_from_SBO} we have $ H = E[X,\star]^{G} $ with $ G $ acting on $ E $ as Galois automorphisms and on $ X $ via $ \gamma $. By the discussion above the Hopf subalgebras of $ E[X,\star]^{G} $ are the subgroups of $ (X,\star) $ that are stable under $ \gamma(G) $, which are precisely the left ideals of $ (G,X) $ (see Definition \ref{defn_left_ideal_and_ideal}).
\item If $ Y $ is a left ideal of $ (G,X) $ then the corresponding Hopf subalgebra of $ E[X,\star]^{G} $ is $ E[Y,\star]^{G} $, and the corresponding fixed field is $ L^{Y} = L^{E[Y,\star]^{G}} $, defined as in \eqref{eqn_fixed_field}. Recall from Proposition \ref{prop_ideals_of_G_X} that $ Y = G_{Y} / G' $; in particular, we have $ G' \subseteq G_{Y} $, so $ E^{G_{Y}} \subseteq E^{G'} = L $. 

Now using the action of $ E[X,\star]^{G} $ on $ L $ given in Proposition \ref{prop_Hopf_algebra_from_SBO} we see that if $ t \in E^{G_{Y}} $ and $ h = \sum_{\bar{y} \in Y} c_{\bar{y}} \bar{y} \in L^{E[Y,\star]^{G}} $ then 
\[ h(t) = \sum_{\bar{y} \in Y} c_{\bar{y}} \bar{y}(t) = \sum_{\bar{y} \in Y} c_{\bar{y}} t = \varepsilon(h)t, \]
so $ t \in L^{Y} $. Hence $ E^{G_{Y}} \subseteq L^{Y} $. But we have
\[ [L:L^{Y}] = \dim_{K}(E[Y,\star]^{G}) = |Y| = \frac{|G_{Y}|}{|G'|} = \frac{ [E:E^{G_{Y}}] }{ [E:L] } = [L:E^{G_{Y}}]. \]
Therefore $ L^{Y} = E^{G_{Y}} $, as claimed. 
\item As described above, we obtain a quotient Hopf-Galois structure on $ L^{Y} / K $ if and only if $ E[Y,\star]^{G} $ is a normal Hopf subalgebra of $ E[X,\star]^{G} $, which occurs if and only if the left ideal $ Y $ of $ (G,X) $ is also a normal subgroup of $ X $, which is precisely the condition for $ Y $ to be an ideal of $ (G,X) $ (see Definition \ref{defn_left_ideal_and_ideal}). 
\item Since $ L^{Y} = E^{G_{Y}} $, the extension $ L^{Y} / K $ is a Galois extension if and only if the subgroup $ G_{Y} $ attached to the left ideal $ Y $ is a normal subgroup of $ G $, which is precisely the condition for $ Y $ to be an enhanced ideal of $ (G,X) $ (see Definition \ref{defn_enhanced_left_ideal}). 
\end{enumerate}
\end{proof}

Finally, we consider the Hopf algebras giving Hopf-Galois structures on the various subextensions. 

In \cite{ST22} it is shown that if $ (J,\star,\cdot) $ is a skew brace with corresponding Hopf-Galois structure $ L[J,\star]^{J} $ on a Galois extension $ L/K $ with Galois group $ (J,\cdot) $ then, given a left ideal $ I $ of $ (J,\star,\cdot) $, the skew brace $ (I,\star,\cdot) $ corresponds to the Hopf-Galois structure given by $ L^{I} \otimes_{K} L[I,\star]^{J} $ on $ L/L^{I} $. Moreover, if $ I $ is an ideal then the skew brace $ (J/I, \star, \cdot) $ corresponds to the Hopf-Galois structure given by $ L[J/I,\star]^{J} $ on $ L^{I}/K $. It is also observed that the Hopf algebra $ L[J/I,\star]^{J} $ gives a Hopf-Galois structure on $ L^{I}/K $ in the case that $ I $ is a strong left ideal, but not an ideal, of $ (J,\star,\cdot) $. In this case $ L^{I}/K $ in a non-normal extension, so there is no corresponding skew brace. 

From the details of the proof of Theorem \ref{thm_HG_correspondence} we see that if $ (G,\cdot,X,\star,\odot) $ is a skew bracoid with corresponding Hopf-Galois structure $ E[X,\star]^{G} $ on a separable extension $ L/K $ and $ Y $ is a left ideal of $ (G,X) $ then the skew bracoid $ (G_{Y},Y) $ corresponds to the Hopf-Galois structure given by $ L^{Y} \otimes_{K} E[Y,\star]^{G} $ on $ L/L^{Y} $. Moreover, if $ Y $ is an ideal then the skew bracoid $ (G, X/Y) $ corresponds to the Hopf-Galois structure given by $ E[X/Y,\star]^{G} $ on $ L^{Y}/K $. Note that this description is valid regardless of whether $ L^{Y}/K $ is a Galois extension or not. In particular, returning to the Galois case for a moment, the Hopf-Galois structure given by $ L[J/I,\star]^{J} $ on the non-normal extension $ L^{I}/K $ in the case that $ I $ is a strong left ideal, but not an ideal, of $ (J,\star,\cdot) $ corresponds under our theory with the skew bracoid $ (J,\cdot,J/I,\star,\odot) $ (a quotient of a skew brace as described in Proposition \ref{prop_skew_bracoid_quotient_skew_brace}. 

In fact, we have: 

\begin{proposition}
Let $ (G,\cdot,X,\star,\odot) $ be a skew bracoid and let $ H = E[X,\star]^{G} $ give the corresponding Hopf-Galois structure on $ L/K $. The following are equivalent:
\begin{enumerate}[ref=\roman*]
\item the Hopf-Galois structure given by $ H $ arises as the quotient of a Hopf-Galois structure on the Galois extension $ E/K $ by some normal Hopf subalgebra; \label{enum_prop_quotient_galois_HGS_1}
\item $ (G,X) $ arises as the quotient of a skew brace by a strong left ideal. \label{enum_prop_quotient_galois_HGS_2}
\end{enumerate}
\end{proposition}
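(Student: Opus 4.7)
The plan is to exploit the Stefanello--Trappeniers correspondence applied to the Galois extension $E/K$ itself, which identifies skew braces $(G,\star',\cdot)$ with Hopf--Galois structures on $E/K$, together with the standard dictionary between subgroups of $(G,\star')$ stable under $\gamma'(G)$ and Hopf subalgebras of $E[G,\star']^G$. Under this dictionary, the normal Hopf subalgebras correspond to strong left ideals of the skew brace, and the quotient Hopf algebras have the form $E[G/I,\star']^G$. These quotients give Hopf--Galois structures on the intermediate field $E^I$. Pairing this with our own quotient construction (Proposition~\ref{prop_skew_bracoid_quotient_skew_brace}) should make the two objects match up.

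For (ii)~$\Rightarrow$~(i), suppose that $(G,X) \cong (B,B/A)$ for some skew brace $(B,\star',\cdot')$ and strong left ideal $A$. Because $\Stab_{G}(\bar e) = G'$ and because the stabilizer of the identity coset $e_{B}A$ in $(B,B/A)$ is exactly $A$, Proposition~\ref{prop_isomorphism_stabilizers} lets me transport the skew brace structure along the isomorphism so that, without loss of generality, I have a skew brace $(G,\star',\cdot)$ in which $G'$ is a strong left ideal and the associated quotient skew bracoid is $(G,X)$ with its given operation~$\star$. The Galois case of Stefanello--Trappeniers yields the Hopf--Galois structure on $E/K$ given by $E[G,\star']^G$; the strong left ideal $G'$ corresponds to the normal Hopf subalgebra $E[G',\star']^G$, whose fixed field is $E^{G'}=L$; and the quotient Hopf algebra is $E[G/G',\star']^G = E[X,\star]^G = H$. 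Checking the action formulas in Proposition~\ref{prop_Hopf_algebra_from_SBO} against the quotient action inherited from the action of $E[G,\star']^G$ on $E$ confirms that the resulting structure on $L/K$ is the one given by $H$.

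For (i)~$\Rightarrow$~(ii), suppose $H$ arises as a quotient of some Hopf--Galois structure $H'$ on $E/K$ by a normal Hopf subalgebra $H''$. Apply Stefanello--Trappeniers to write $H' = E[G,\star']^G$ for a skew brace $(G,\star',\cdot)$, and then, using the descent description of Hopf subalgebras recalled in the excerpt, write $H'' = E[I,\star']^G$ for a $\gamma'(G)$-stable subgroup $I$ of $(G,\star')$ that is also normal in $(G,\star')$; that is, $I$ is a strong left ideal of $(G,\star',\cdot)$. The fixed field of $H''$ is $E^I$, so the hypothesis that the quotient acts on $L$ forces $I = G'$. Proposition~\ref{prop_skew_bracoid_quotient_skew_brace} then produces the quotient skew bracoid $(G,G/G')$ of $(G,\star',\cdot)$ by the strong left ideal $G'$, equipped with an inherited operation $\star^{\mathrm{quot}}$ on $X$. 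The corresponding Hopf algebra $E[X,\star^{\mathrm{quot}}]^G$ coincides with the quotient $H'/H'' = H$ and gives the same Hopf--Galois structure on $L/K$, so the injectivity part of Theorem~\ref{thm_Hopf_Galois_structures} forces $\star^{\mathrm{quot}} = \star$, and $(G,X)$ is indeed the quotient skew bracoid.

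The main obstacle is the bookkeeping needed to verify that the two bijections in play commute: on one side, Theorem~\ref{thm_Hopf_Galois_structures} matches skew bracoids with Hopf--Galois structures on $L/K$; on the other side, Proposition~\ref{prop_skew_bracoid_quotient_skew_brace} matches skew brace quotients with Hopf algebra quotients via Stefanello--Trappeniers on $E/K$. Concretely, one must check that the action of $E[X,\star^{\mathrm{quot}}]^G$ on $L$ inherited from the action of $E[G,\star']^G$ on $E$ sends each coset $\bar x = xG'$ to the Galois automorphism $x$ acting on $L$, in accordance with Proposition~\ref{prop_Hopf_algebra_from_SBO}. This is essentially an unwinding of the action formula together with the descent identification $E[G/G',\star']^G \cong E[X,\star^{\mathrm{quot}}]^G$, but doing it cleanly is where most of the work lies.
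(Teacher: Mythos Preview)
Your proposal is correct and follows essentially the same route as the paper's proof: in both directions you invoke the Stefanello--Trappeniers correspondence for $E/K$, identify the relevant normal Hopf subalgebra with a strong left ideal whose underlying set must equal $G'$ (since its fixed field is $L$), and then match the quotient Hopf algebra $E[G/G',\star']^{G}$ with $E[X,\star]^{G}$ via Theorem~\ref{thm_Hopf_Galois_structures}. The only real difference is cosmetic: the paper reads ``arises as the quotient'' literally (so the skew brace already lives on $G$ with $G'$ as the strong left ideal), whereas you begin from an abstract isomorphism $(G,X)\cong(B,B/A)$ and use Proposition~\ref{prop_isomorphism_stabilizers} to transport the structure onto $G$ before proceeding --- a slightly more careful reduction that the paper simply asserts.
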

\begin{proof}
First suppose that \eqref{enum_prop_quotient_galois_HGS_1} holds. We may express the Hopf-Galois structure on $ E/K $ as $ E[G,\star]^{G} $ for some binary operation $ \star $ on $ G $ such that $ (G,\star,\cdot) $ is a skew brace, and the normal Hopf subalgebra as $ E[G',\star]^{G} $ with $ (G',\star,\cdot) $ a strong left ideal of $ (G,\star,\cdot) $ Note that the underlying set of this strong left ideal is necessarily equal to $ G' $, since the corresponding fixed field is assumed to be $ L $. The resulting quotient Hopf-Galois structure on $ L/K $ (which coincides with $ H $ by hypothesis) is given by $ E[G/G',\star]^{G} $, where $ (G/G',\star) $ is a quotient group of $ (G,\star) $. By Theorem \ref{thm_Hopf_Galois_structures} this Hopf-Galois structure corresponds to the skew bracoid $ (G,\cdot,X,\star,\odot) $; hence this skew bracoid arises as the quotient of a skew brace by a strong left ideal, and so \eqref{enum_prop_quotient_galois_HGS_2} holds.
Conversely, suppose that \eqref{enum_prop_quotient_galois_HGS_2} holds. Then there exists some binary operation $ \star $ on $ G $ such that $ (G,\star,\cdot) $ is a skew brace, and $ G' $ is a strong left ideal, so that $ (G,X) $ is obtained via the process described in Proposition \ref{prop_skew_bracoid_quotient_skew_brace}. Note that the underlying sets of the skew brace and strong left ideal are necessarily equal to $ G $ and $ G' $, since the sets appearing in the skew bracoid are $ G $ and $ X=G/G' $. The skew brace corresponds to a Hopf-Galois structure on $ E/K $, given by $ E[G,\star]^{G} $, and the strong left ideal corresponds to a normal Hopf subalgebra $ E[G',\star]^{G} $. The resulting quotient Hopf-Galois structure is given by $ E[G/G',\star]^{G} = E[X,\star]^{G} $, which coincides with the Hopf-Galois structure that corresponds to the skew bracoid $ (G,\cdot,X,\star,\odot) $; hence   
\eqref{enum_prop_quotient_galois_HGS_1} holds.
\end{proof}

\bibliographystyle{plain} 
\bibliography{Omahabib}

\end{document}